\newtheorem{definition}{Definition}
\newtheorem{corollary}[definition]{Corollary}
\newtheorem{theorem}[definition]{Theorem}
\newtheorem{lemma}[definition]{Lemma}
\newtheorem{proposition}[definition]{Proposition}
\newtheorem*{main}{Theorem}
\def\XXint#1#2#3{{\setbox0=\hbox{$#1{#2#3}{\int}$ }
\vcenter{\hbox{$#2#3$ }}\kern-.6\wd0}}
\begin{document}
\section*{Automorphism groups of simplicial complexes and rigidity for uniformly bounded representations} 
\begin{Large}
{Juhani Koivisto} \footnote{Supported by V\"ais\"al\"a Foundation and the Academy of Finland, project 252293.}
\end{Large} \\ \emph{Department of Mathematics and Statistics, University of Helsinki, Finland, email: juhani.koivisto@helsinki.fi}

%\maketitle{}

\paragraph{Abstract.}
We consider $L^p$-cohomology of reflexive Banach spaces and give a spectral condition implying the vanishing of 1-cohomology with coefficients in uniformly bounded representations on a Hilbert space. 
%$L^p$-cohomology on reflexive Banach spaces is considered as a natural extension of the $L^2$-cohomology considered by Ballman and J. \'Swi\c atkowski. As an application, a spectral condition implying vanishing of 1-cohomology with coefficients in uniformly bounded representations on a Hilbert space is given. 
\paragraph*{Mathematics Subject Classification (2000):} 20F65
\paragraph*{Key words} Fixed point property, cohomology, Banach space, uniformly bounded representation, spectral criterion.

\section{Introduction}
Since its introduction by David Kazhdan in \cite{Kazhdan}, property $(T)$ and its generalizations as cohomological vanishing has become a fundamental concept in mathematics \cite{BHV}. The aim of this paper is to extend the framework of W. Ballman and J. \'Swi\c atkowski \cite{BS} to reflexive Banach spaces and as an application, to give a spectral condition implying vanishing of cohomology for uniformly bounded representations on a Hilbert space.
%For property (T) and its generalizations we refer to \cite{BHV}. 
Along with W. Ballman and J. \'Swi\c atkowski, A. \.{Z}uk \cite{Zuk96} was among the first to also consider such criteria for unitary representations, both following fundamental work by H. Garland [G].  Since then, extending the spectral method beyond Hilbert spaces has been considered in \cite{Monod, Cha, Dymara, Ers, Fish, L} and by Piotr W. Nowak \cite{Nowak} extending the spectral method in \cite{Zuk03} to reflexive Banach spaces. Appropriately extending the scheme in \cite{Nowak} we similarly extend the spectral condition of \cite{BS} to uniformly bounded representations on a Hilbert space. Motivation for such generalizations arises, among others, from Shalom's conjecture \cite{OWR} stating that any hyperbolic group $\Gamma$ admits a uniformly bounded representation $\pi$ with ${H^1}(\Gamma, \pi) \neq 0$ together with a proper cocycle in $Z^1(\Gamma, \pi)$. \\
For a finite graph $K$ with vertices $\mathcal{V}_K$, consider the graph Laplacian $\triangle_+$ on the space of real valued functions on $\mathcal{V}_K$ defined by $$\triangle_+f(v) = f(v)-Mf(v),$$ where $Mf(v)$ is the mean value of $f$ on the vertices adjacent to $v$. Denote by $\lambda_1(K)$ the spectral gap of $\triangle_+$ and its associated Poincar\'e constant by $\kappa_2(K, \mathbb{R})=\lambda_1(K)^{-1/2}$. More generally $\kappa_2(K, \mathcal{H})= \lambda_1(K)^{-1/2}$ for any separable infinite-dimensional Hilbert space $\mathcal{H}$, \cite{Nowak}.

%\begin{theorem}
%Let $X$ be a locally finite $2$-dimensional simplicial complex, $\Gamma$ a properly discontinuous group of automorphisms of $X$ and $\pi : \Gamma \rightarrow \mathrm{B}(\mathcal{H})$ a uniformly bounded representation of $\Gamma$ on a Hilbert space $(\mathcal{H}, \Vert \cdot \Vert_\mathcal{H})$ satisfying $\sup_{g \in \Gamma} \Vert \pi_g \Vert < C$. Suppose the link $X_\tau$ of every vertex $\tau$ of $X$ is connected and the associated Poincar\'e constants satisfy $$\max \left\lbrace \kappa_2(X_\tau, E), \, \kappa_2(X_\tau, E^*) \right\rbrace < \sqrt{2} C^{-2},$$ for $1 < C$, where $E = (\mathcal{H}, \Vert \cdot \Vert_E)$ and $\Vert v \Vert_E = \sup_{g \in \Gamma} \Vert \pi_g v \Vert_\mathcal{H}$. Then, $L^2H^1(X,\pi) = 0$.
%\end{theorem}

\begin{main}
Let $X$ be a locally finite $2$-dimensional simplicial complex, $\Gamma$ a discrete properly discontinuous group of automorphisms of $X$ and $\pi : \Gamma \rightarrow \mathrm{B}(\mathcal{H})$ a uniformly bounded representation of $\Gamma$ on a separable infinite-dimensional Hilbert space $\mathcal{H}$. If for any vertex $\tau$ of $X$ the link $X_\tau$ is connected and $$\sup_{g \in \Gamma} \Vert \pi_g\Vert < \dfrac{\sqrt{2}}{ \kappa_2(X_\tau, \mathcal{H})},$$ then $L^2H^1(X,\pi) = 0$.
\end{main}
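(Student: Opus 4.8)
The plan is to run Garland's ``local-to-global'' method in the form used by Ballmann--\'Swi\c atkowski for unitary coefficients \cite{BS} and adapted, as in the reflexive-Banach part of this paper following \cite{Nowak}, keeping the uniform bound $\Lambda:=\sup_{g\in\Gamma}\|\pi_g\|<\infty$ explicitly in every estimate; write $\kappa:=\sup_{\tau}\kappa_2(X_\tau,\mathcal{H})$, which by hypothesis satisfies $\Lambda\kappa<\sqrt2$, equivalently $2-\Lambda^{2}\kappa^{2}>0$. First I would fix the $\Gamma$-equivariant $L^{2}$-cochain complex $C^{0}(X,\pi)\xrightarrow{d_{0}}C^{1}(X,\pi)\xrightarrow{d_{1}}C^{2}(X,\pi)$: an $i$-cochain is an alternating function $\phi$ on oriented $i$-simplices with $\phi(g\sigma)=\pi_{g}\phi(\sigma)$ and $\|\phi\|^{2}:=\sum_{[\sigma]}|\Gamma_{\sigma}|^{-1}\|\phi(\sigma)\|_{\mathcal{H}}^{2}<\infty$, the sum over a choice of $\Gamma$-orbit representatives. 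Local finiteness of $X$ and proper discontinuity make each $\Gamma_{\sigma}$ finite and turn $C^{i}(X,\pi)$ into a Hilbert space on which $d_{0},d_{1}$ are bounded (changing the orbit representatives gives an equivalent norm, with distortion at most $\Lambda^{2}$, which is harmless). By the Hodge decomposition $C^{1}=\overline{\operatorname{im}d_{0}}\oplus\ker\Delta_{1}\oplus\overline{\operatorname{im}d_{1}^{*}}$, where $\ker\Delta_{1}=\ker d_{1}\cap\ker d_{0}^{*}$, the group $L^{2}H^{1}(X,\pi)$ vanishes as soon as there is no nonzero \emph{harmonic} $1$-cochain, i.e. no $\phi\neq0$ with $d_{1}\phi=0$ and $d_{0}^{*}\phi=0$ (in the unreduced convention one additionally needs $\operatorname{im}d_{0}$ closed, which I would obtain from the same spectral estimates applied to $\Delta_{0}=d_{0}^{*}d_{0}$). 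Thus it suffices to show $\ker\Delta_{1}=0$.

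Next I would carry out the localization at links. For a vertex $\tau$, the link $X_{\tau}$ is a finite connected graph -- finite by local finiteness of $X$, connected by hypothesis -- whose vertices are the edges of $X$ at $\tau$ and whose edges are the triangles of $X$ at $\tau$. To $\phi\in C^{1}(X,\pi)$ one associates $\phi_{\tau}\in C^{0}(X_{\tau},\mathcal{H})$, the restriction of $\phi$ to the edges at $\tau$ (transported to orbit representatives by appropriate operators $\pi_{g}$), and to $\psi\in C^{2}(X,\pi)$ one associates $\psi_{\tau}\in C^{1}(X_{\tau},\mathcal{H})$ similarly. The standard identities then hold, up to insertion of operators $\pi_{g}$ each of norm $\le\Lambda$: (i) $\sum_{\tau}\|\phi_{\tau}\|_{X_{\tau}}^{2}$ and $\sum_{\tau}\|\psi_{\tau}\|_{X_{\tau}}^{2}$ are comparable to $\|\phi\|_{X}^{2}$ and $\|\psi\|_{X}^{2}$ with constants $2$ and $3$ (an edge, resp. triangle, meets two, resp. three, vertices); (ii) $(d_{0}^{*}\phi)(\tau)$ is, up to normalization, the mean $\overline{\phi_{\tau}}$ of $\phi_{\tau}$ over the vertices of $X_{\tau}$; and (iii) $(d_{1}\phi)_{\tau}=\phi^{\partial\tau}-\delta_{\tau}\phi_{\tau}$, where $\delta_{\tau}\colon C^{0}(X_{\tau})\to C^{1}(X_{\tau})$ is the link coboundary and $\phi^{\partial\tau}$ records the values of $\phi$ on the edges opposite $\tau$ in the triangles at $\tau$. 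On each link one then invokes the Poincar\'e inequality attached to the spectral gap, $\|\phi_{\tau}-\overline{\phi_{\tau}}\|_{X_{\tau}}\le\kappa_2(X_{\tau},\mathcal{H})\,\|\delta_{\tau}\phi_{\tau}\|_{X_{\tau}}$, valid with exactly this constant since $\mathcal{H}$ is separable and infinite-dimensional (the computation $\kappa_2(X_{\tau},\mathcal{H})=\lambda_{1}(X_{\tau})^{-1/2}$ recalled from \cite{Nowak}).

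Assembling (i)--(iii), and bounding every inserted $\pi_{g}$ and every cross term between distinct links by $\Lambda$, I expect a global inequality of the shape
$$\bigl(2-\Lambda^{2}\kappa^{2}\bigr)\,\|\phi\|_{X}^{2}\ \le\ C\bigl(\|d_{1}\phi\|_{X}^{2}+\|d_{0}^{*}\phi\|_{X}^{2}\bigr)$$
for some positive constant $C$. Since $2-\Lambda^{2}\kappa^{2}>0$, any harmonic $1$-cochain $\phi$ -- for which the right-hand side vanishes -- must satisfy $\|\phi\|_{X}=0$; hence $\ker\Delta_{1}=0$ and $L^{2}H^{1}(X,\pi)=0$. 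When $\pi$ is unitary one has $\Lambda=1$ and the condition becomes $\lambda_{1}(X_{\tau})>1/2$, recovering the criterion of \cite{BS}.

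The step I expect to be the main obstacle is precisely this assembly: making the localization identities (i)--(iii) rigorous for $\Gamma$-equivariant, \emph{non-unitary} cochains and tracking the powers of $\Lambda$ so that the surviving coefficient is $2-\Lambda^{2}\kappa^{2}$ rather than a weaker quantity such as $2-\Lambda^{4}\kappa^{2}$ -- only the former matches the stated threshold and degenerates to $\lambda_{1}>1/2$ in the isometric case. Concretely, one must check that the distortion enters the link Poincar\'e step as a single factor $\Lambda$ on the effective Poincar\'e constant $\Lambda\kappa_2$, exactly as in the reflexive-Banach scheme whose threshold for isometric representations is $\kappa_2<\sqrt2$. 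A secondary point, needed only for the unreduced version of the statement, is the closedness of $\operatorname{im}d_{0}$, which I would get by propagating the link spectral gap to a spectral gap of $\Delta_{0}$ away from $\ker d_{0}$.
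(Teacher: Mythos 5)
There is a genuine gap, and it is exactly the step you flag and then leave as ``I expect'': the assembly of the link estimates into a global inequality with the sharp coefficient $2-\Lambda^{2}\kappa^{2}$. Your plan works throughout with the $\mathcal{H}$-norm on twisted cochains and proposes to absorb the non-unitarity by ``inserting operators $\pi_g$ of norm $\le\Lambda$'' into the standard Ballmann--\'Swi\c atkowski identities. But those identities are not perturbations of each other by single factors of $\Lambda$: they are exact identities whose proofs use $\Gamma$-invariance of the integrand. With a merely uniformly bounded $\pi$, the quantity $\Vert\phi(\sigma)\Vert_{\mathcal{H}}$ is not constant on $\Gamma$-orbits, so the sum-switching lemma (the analogue of Proposition \ref{switchingsums}), the independence of the norm from the choice of orbit representatives, the local formula for the codifferential (Proposition \ref{codifferential}), and above all the exact cancellation $\sum_{\tau}Q_\tau(\phi_\tau)=0$ for $\phi\in\ker d$ (Corollary \ref{propQ}) all fail as identities; replacing each by a two-sided bound with constant $\Lambda$ loses a factor of $\Lambda$ at several independent places, and a naive accounting yields a threshold strictly worse than $\Lambda\kappa_2<\sqrt2$ (your own worry about ending up with $2-\Lambda^{4}\kappa^{2}$ is well founded, and nothing in the proposal rules it out). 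Likewise $d_0^*$, the Hilbert-space adjoint you use for the Hodge decomposition, involves $\pi_g^*$ rather than $\pi_{g^{-1}}$, so it does not satisfy the clean mean-value formula (ii) you invoke; and the closedness of $\operatorname{im}d_0$ is asserted, not proved.

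The paper avoids all of this by a renorming trick that your proposal does not contain: set $\Vert x\Vert_E=\sup_{g\in\Gamma}\Vert\pi_g x\Vert_{\mathcal{H}}$, so that $\pi$ becomes an \emph{isometric} representation on the reflexive Banach space $E=(\mathcal{H},\Vert\cdot\Vert_E)$, with $\Vert x\Vert_{\mathcal{H}}\le\Vert x\Vert_E\le C\Vert x\Vert_{\mathcal{H}}$, $C=\sup_g\Vert\pi_g\Vert$. All the local-to-global identities then hold exactly in the $E$-norm (Sections \ref{SU}--\ref{secloc}), and the constant $C$ enters only once, through the norm comparison inside the link Poincar\'e inequality (Proposition \ref{FIX1}), which is what produces the coefficient $\kappa_2(X_\tau,\mathcal{H})^{-2}-C^{2}/2$, i.e.\ the stated threshold $C\kappa_2<\sqrt2$. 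The price is that $E$ is no longer a Hilbert space, so the Hodge decomposition you rely on is unavailable; the paper replaces it with a duality argument (Proposition \ref{VCH1}: $\delta$ bounded below on the relevant dual implies $d_0$ surjects onto $\ker d_1$, which also handles closedness of the image), together with the diagram chase in the proof of Theorem \ref{theorem} transferring the lower bound from $\ker\overline d$ to $(\ker d_1)^*$. To complete your proof you would either have to reproduce this renorming-plus-duality scheme, or genuinely carry out the $\Lambda$-bookkeeping you postpone -- and the latter, done termwise on the $\mathcal{H}$-norm identities, does not give the claimed constant.
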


\subparagraph*{Structure of the paper} In Sections \ref{SU} to \ref{PI} the framework of \cite{BS} for unitary representations on Hilbert spaces is extended to reflexive Bananch spaces and isometric representations: Section \ref{SU} introduces the generalized set up; Section \ref{projsec} and \ref{L} deal with the dual of the twisted cochains;  Section \ref{DC} introduces differentials and codifferentials; Section \ref{secloc} discusses localization of the problem and Section \ref{PI} introduces the spectral set up in terms of Poincar\'e inequalities and constants on the links. Section \ref{LH} introduces $L^p$-cohomology as a natural extension to $L^2$-cohomology, which is then applied to uniformly bounded representations using the fact that they correspond to isometric representations on some reflexive Banach space.
%In Section \ref{LH}, $L^p$-cohomology is introduces as a natural extension to $L^2$-cohomology and applied to uniformly bounded representations: Section \ref{SU} introduces the generalized set up; Section \ref{projsec} and \ref{L} deal with the dual of the twisted cochains;  Section \ref{DC} introduces differentials and codifferentials; Section \ref{secloc} discusses localization of the problem; Section \ref{PI} introduces the localized spectral set up in terms of Poincar\'e inequalities and constants; Section \ref{LH} applies the previous to uniformly bounded representations using the fact they correspond to isometric representations on some other Banach space.

\subparagraph*{Acknowledgements} I would like to thank Piotr W. Nowak for suggesting this topic, invaluable advice, and devotion without which this project would not have been possible. I would also like to thank V\"ais\"al\"a foundation, my advisor Ilkka Holopainen and the ''Analysis, metric geometry and differential and metirc topology'' project for financial support, Pekka Pankka and Izhar Oppenheim for helpful discussions and correspondence, and Antti Per\"al\"a for many enjoyable conversations on related topics. 

\section{Set up} \label{SU}
In this chapter notation is fixed. We recall the notation and some basic facts used by \cite{BS} for weighted simplicial complexes and extend the notion of square integrable cochains to reflexive Banach spaces and $p > 1$. 
\subsection{Weighted complexes}
Throughout, let $X$ denote an $n$-dimensional locally finite simplicial complex. Following \cite{BS} we use the following notation: $X(k)$ is the set of (unordered) $k$-simplexes of $X$; $\Sigma(k)$ is the set of ordered $k$-simplexes of $X$. As usual we write $\sigma = \lbrace v_0, \dots, v_k \rbrace$ for a $k$-simplex and $\sigma =(v_0, \dots, v_k)$ for an ordered $k$-simplex. If the vertices of $\tau \in \Sigma(l)$ are vertices of $\sigma \in \Sigma(k)$, we say that $\tau \subset \sigma$, and for $\tau = (v_0, \dots, \hat{v}_i, \dots, v_k)$, i.e. $v_i \notin \tau$, we denote by $[\sigma : \tau] = (-1)^i$ the sign of $\tau$ in $\sigma = (v_0, \dots, v_k)$. As customary, we write $\sigma_i$ for $(v_0, \dots, \hat{v}_i, \dots, v_k)$. In addition to orientation we consider $X$ to be equipped with a weight $\omega$, by which we mean a map from the oriented simplexes of $X$ to the integers such that for $\sigma = (v_0, \dots, v_k) \in \Sigma(k)$, $$\omega(\sigma) = \omega(\lbrace v_0, \dots, v_k \rbrace),$$ where $\omega(\lbrace v_0, \dots, v_k \rbrace)$ denotes the number of $n$-simplexes containing $\lbrace v_0, \dots, v_k\rbrace$. In addition, we assume that $\omega(\sigma) \geq 1$ for every simplex of $X$. Beginning from Section \ref{secloc} and onwards, we consider $X$ locally through its links, where, by the link of $\tau = (v_0, \dots, v_l) \in \Sigma(l)$ denoted by $X_\tau$, we mean the $(n-l-1)$-dimensional subcomplex consisting of all simplexes $\lbrace w_0, \dots, w_j\rbrace$ disjoint from $\tau$ such that $\lbrace v_0, \dots, v_l\rbrace \cup \lbrace w_0, \dots, w_j\rbrace$ is a simplex of $X$. Since $X$ is locally finite, $X_\tau$ is finite. Here as previously, $X_\tau(j)$ denotes the $j$-simplexes of $X_\tau$, $\Sigma_\tau(j)$ its oriented $j$-simplexes and so on. In particular, for $\sigma \in \Sigma_\tau(j)$ and $\tau \in \Sigma(l)$ we denote by $\sigma * \tau \in \Sigma(j+l+1)$ the join of $\sigma$ and $\tau$ obtained by juxtaposing the two in that order. \\
In addition to the above, we assume throughout that $X$ is a $\Gamma$-space where $\Gamma$ is a discrete topological group acting properly and discontinuously by simplicial automorphisms on $X$. In other words, $\Gamma$ permutes the simplexes of $X$ preserving their order and weights: that is for $\sigma = (v_0, \dots, v_k) \in \Sigma(k)$, $g \cdot \sigma = (g(v_0), \dots, g(v_k)) \in \Sigma(k)$ and $\omega (\sigma) = \omega(g \cdot \sigma)$. As usual, we denote by $\Gamma \sigma$ and $\Gamma_\sigma$ the $\Gamma$-orbit and stabilizer of $\sigma \in \Sigma(k)$, respectively, by $\Sigma(k, \Gamma) \subset \Sigma(k)$ some chosen set of representatives of $\Gamma$-orbits in $\Sigma(k)$, and by $\vert \cdot \vert$ the counting measure on $\Gamma$. In particular since $\Gamma$ is discrete, stabilizers are finite and the Haar measure on $\Gamma$ is $\vert \cdot \vert$. Although the discreteness assumption can be avoided, it will be used when constructing projections in Section \ref{projsec}. For the following frequently used facts we refer to \cite{BS}:

\begin{proposition} \label{combinatorial} \cite{BS}
Let $n$ be the dimension of $X$. Then, for  $\tau \in \Sigma(k)$ $$\sum_{\substack{\sigma \in \Sigma(k+1) \\ \tau \subset \sigma}}\omega(\sigma) = (n-k)(k+2)!\omega(\tau).$$ 
\end{proposition}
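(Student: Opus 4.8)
The plan is to pass from ordered to unordered simplices and then to double-count incidences with the $n$-simplices of $X$.

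First, note that the relation $\tau\subset\sigma$ depends only on the underlying vertex sets, and that $\omega$ is, by its very definition, constant on the set of orderings of a fixed unordered simplex. Since a $(k+1)$-simplex has $k+2$ vertices, it admits $(k+2)!$ distinct orderings, so
\[ \sum_{\substack{\sigma\in\Sigma(k+1)\\ \tau\subset\sigma}}\omega(\sigma) \;=\; (k+2)!\sum_{\substack{s\in X(k+1)\\ t\subset s}}\omega(s), \]
where $t\in X(k)$ denotes the unordered simplex underlying $\tau$ and the sum on the right runs over unordered $(k+1)$-simplices of $X$ whose vertex set contains that of $t$.

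Next, I would unfold the definition $\omega(s)=\#\{\eta\in X(n): s\subset\eta\}$, turning the right-hand sum into a count of the pairs $(s,\eta)$ with $t\subset s\subset\eta$, where $s\in X(k+1)$ and $\eta\in X(n)$. Interchanging the two summations and fixing an $n$-simplex $\eta$ with $t\subset\eta$, the simplices $s$ satisfying $t\subset s\subset\eta$ are precisely those obtained by adjoining to $t$ one of the $(n+1)-(k+1)=n-k$ vertices of $\eta$ not lying in $t$; hence there are exactly $n-k$ of them, independently of the choice of $\eta$. Therefore
\[ \sum_{\substack{s\in X(k+1)\\ t\subset s}}\omega(s) \;=\; (n-k)\,\#\{\eta\in X(n): t\subset\eta\} \;=\; (n-k)\,\omega(t) \;=\; (n-k)\,\omega(\tau), \]
and multiplying by $(k+2)!$ yields the asserted identity.

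There is no genuine obstacle here: the argument is a bookkeeping exercise. The two points deserving care are that $\tau\subset\sigma$ is a condition on vertex sets only — so the passage to unordered simplices contributes the full factorial $(k+2)!$ rather than a binomial coefficient — and that $\omega$ is genuinely constant along the fibre of orderings above a given unordered simplex, which is exactly how the weight was defined. Since the statement is quoted from \cite{BS}, I expect to record it as a brief verification rather than belabour the details.
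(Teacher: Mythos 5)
Your proof is correct: the reduction from ordered to unordered simplices (picking up the full factor $(k+2)!$ because $\tau\subset\sigma$ is a vertex-set condition and $\omega$ is constant on orderings), followed by double counting the pairs $(s,\eta)$ with $t\subset s\subset\eta$ and noting there are exactly $n-k$ choices of $s$ inside each $n$-simplex $\eta\supset t$, is exactly the standard verification. The paper itself offers no argument for this proposition --- it is quoted from \cite{BS} with the proof omitted --- so your write-up supplies the missing bookkeeping and does so without any gaps.
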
 \qed

\begin{proposition} \label{switchingsums} \cite{BS}
For $0 \leq l < k \leq n$, let $f=f(\tau, \sigma)$ be a $\Gamma$-invariant function on the set of pairs $(\tau, \sigma)$, $\tau \in \Sigma(l)$, $\sigma \in \Sigma(k)$, such that $\tau \subset \sigma$. Then $$\sum_{\sigma \in \Sigma(k, \Gamma)} \sum_{\substack{\tau \in \Sigma(l) \\ \tau \subset \sigma}} \dfrac{f(\tau, \sigma)}{\vert \Gamma_\sigma \vert} =  \sum_{\tau \in \Sigma(l, \Gamma)}\sum_{\substack{\sigma \in \Sigma(k) \\ \tau \subset \sigma}}\dfrac{f(\tau, \sigma)}{\vert \Gamma_\tau \vert},$$ whenever either side is absolutely convergent. \qed
\end{proposition}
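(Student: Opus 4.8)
The plan is to interpret both sides of the identity as a single weighted sum over the $\Gamma$-orbits of incident pairs, and to show that each side computes this common quantity. Concretely, I would introduce the $\Gamma$-set
$$ P = \lbrace (\tau, \sigma) : \tau \in \Sigma(l),\ \sigma \in \Sigma(k),\ \tau \subset \sigma \rbrace, $$
on which $\Gamma$ acts diagonally; this action is well defined because $\Gamma$ preserves dimension, order, and incidence. Since $\Gamma$ is discrete its stabilizers are finite, and because $g \cdot (\tau,\sigma) = (g\tau, g\sigma)$ the hypothesis that $f$ is $\Gamma$-invariant means precisely that $f$ is constant on $\Gamma$-orbits in $P$. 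I would then fix a set $P(\Gamma)$ of orbit representatives and consider the orbit sum
$$ S = \sum_{(\tau,\sigma) \in P(\Gamma)} \frac{f(\tau,\sigma)}{\vert \Gamma_{(\tau,\sigma)} \vert}, \qquad \Gamma_{(\tau,\sigma)} = \Gamma_\tau \cap \Gamma_\sigma, $$
which is independent of the chosen representatives because both $f$ and the stabilizer order are orbit invariants. The goal is to prove that the left-hand side and the right-hand side each equal $S$.

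For the left-hand side I would use the $\Gamma$-equivariant projection $(\tau,\sigma) \mapsto \sigma$ from $P$ to $\Sigma(k)$. The standard correspondence for a map of $\Gamma$-sets shows that the $\Gamma$-orbits in $P$ lying over a fixed orbit $\Gamma\sigma$ are in bijection with the $\Gamma_\sigma$-orbits of the finite fibre $\lbrace \tau \in \Sigma(l) : \tau \subset \sigma \rbrace$, and that the $\Gamma$-stabilizer of a pair $(\tau,\sigma)$ is exactly the $\Gamma_\sigma$-stabilizer of $\tau$, namely $\Gamma_{(\tau,\sigma)} = \Gamma_\tau \cap \Gamma_\sigma = (\Gamma_\sigma)_\tau$. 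Since $f(g\tau,\sigma) = f(g\tau, g\sigma) = f(\tau,\sigma)$ for $g \in \Gamma_\sigma$, the function $f(\cdot,\sigma)$ is constant on each such $\Gamma_\sigma$-orbit, so the orbit–stabilizer theorem inside the finite group $\Gamma_\sigma$ gives, for a fixed $\sigma$,
$$ \frac{1}{\vert \Gamma_\sigma \vert} \sum_{\substack{\tau \in \Sigma(l) \\ \tau \subset \sigma}} f(\tau,\sigma) = \sum_{[\tau]} \frac{f(\tau,\sigma)}{\vert (\Gamma_\sigma)_\tau \vert}, $$
where $[\tau]$ runs over the $\Gamma_\sigma$-orbits in the fibre. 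Summing over $\sigma \in \Sigma(k,\Gamma)$ reassembles exactly the orbit sum $S$, giving that the left-hand side equals $S$.

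The right-hand side is then obtained by the identical argument with the roles of $\tau$ and $\sigma$ (and of $l$ and $k$) interchanged, using instead the projection $(\tau,\sigma)\mapsto\tau$ onto $\Sigma(l)$, the finite cofibres $\lbrace \sigma \in \Sigma(k) : \tau \subset \sigma \rbrace$ (finite by local finiteness of $X$), and the identity $\Gamma_{(\tau,\sigma)} = (\Gamma_\tau)_\sigma$; this yields that the right-hand side also equals $S$, whence the two sides coincide. Throughout, the hypothesis that one side converges absolutely is what licenses regrouping the doubly-indexed series into orbits (Fubini/Tonelli for sums), and it transfers automatically to the other side once the common value $S$ is identified. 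I expect the main obstacle to be the careful bookkeeping in the orbit-over-orbit correspondence, that is, verifying that the $\Gamma$-orbits in $P$ over $\Gamma\sigma$ match the $\Gamma_\sigma$-orbits of the fibre together with the stated equality of stabilizer orders, since everything else reduces to the orbit–stabilizer theorem in a finite group combined with the absolute-convergence rearrangement.
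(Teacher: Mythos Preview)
The paper does not give its own proof of this proposition: it is stated with a citation to \cite{BS} and closed immediately with a \qed, so there is nothing to compare against at the level of argument. Your double-counting via the diagonal $\Gamma$-action on incident pairs is the standard and correct route, and it is exactly how the result is proved in \cite{BS}.

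One remark that streamlines your bookkeeping: in this paper's conventions the simplices are \emph{ordered}, and $g\cdot\sigma=\sigma$ for $\sigma=(v_0,\dots,v_k)$ forces $g(v_i)=v_i$ for every $i$. Since $\tau\subset\sigma$ means the vertices of $\tau$ lie among those of $\sigma$, every $g\in\Gamma_\sigma$ fixes $\tau$ as well, so $\Gamma_\sigma\subseteq\Gamma_\tau$ and hence $\Gamma_{(\tau,\sigma)}=\Gamma_\tau\cap\Gamma_\sigma=\Gamma_\sigma$. In particular $\Gamma_\sigma$ acts trivially on the fibre $\{\tau:\tau\subset\sigma\}$, so the left-hand side equals the orbit sum $S$ tautologically, and the only place the orbit--stabilizer theorem does real work is on the $\tau$-side, where $(\Gamma_\tau)_\sigma=\Gamma_\sigma$ is what produces the $1/|\Gamma_\tau|$ weight. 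This does not affect the validity of your argument, only its length.
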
 
More generally, Proposition \ref{switchingsums} holds for locally compact unimodular groups \cite{DJ00} replacing the counting measure with the Haar measure.

\subsection{Banach space setting, isometric representations and $p$-integrable cochains}

%\begin{definition}
%Throughout, let $\mathcal{H}$ denote a real separable infinite-dimensional Hilbert space.
%\end{definition}
Throughout, let $(E, \Vert \cdot \Vert_E)$ denote a reflexive Banach space, $\langle \cdot, \cdot \rangle_E$ the natural pairing between $E$ and its continuous dual $E^*$, $\simeq$ isomorphism, $\cong$ isometric isomorphism and $p^*$ the adjoint index of $p$ such that $1/p+1/p^* = 1$.  
%\begin{definition}
%Throughout, let $(E, \Vert \cdot \Vert_E)$ be a reflexive Banach space, $\langle \cdot, \cdot \rangle_E$ the natural pairing between $E$ and its continuous dual $E^*$ and $p^*$ the adjoint index of $p$ such that $1/p+1/p^* = 1$.  
%\end{definition}
Moreover, let $\pi: \Gamma \rightarrow \mathrm{Iso}(E)$ denote an isometric representation of $\Gamma$ on $E$ where $\mathrm{Iso}(E)$ denotes the group of isometric linear automorphisms on $E$ and by $\bar{\pi}: \Gamma \rightarrow \mathrm{Iso}(E^*)$ its corresponding contragradient representation given by $\bar{\pi}_g = \pi^*_{g^{-1}}$ where $\pi^*$ is the transpose of $\pi$. For combinatorial purposes we also introduce antisymmetrization:
\begin{definition}
For $n \geq 1$ we denote by $S_n$ the symmetric group of $n$ elements and by $\mathrm{sign} \colon S_n \rightarrow \lbrace -1, 1 \rbrace$ the signature of the permutation: $1$ if $\alpha \in S_n$ is an even permutation of the $n$ elements and otherwise $-1$. For $f: \Sigma(k) \rightarrow E$ define its alternation point-wise as the linear idempotent map $$ \mathrm{Alt} f(\sigma) = \dfrac{1}{(k+1)!} \sum_{\alpha \in S_{k+1}} \mathrm{sign}(\alpha) \alpha^*f(\sigma),$$ where $\alpha^*f(\sigma) = f(v_{\alpha(0)}, \dots, v_{\alpha(k)})$ for $\sigma = (v_0, \dots, v_k) \in \Sigma(k).$ As usual, we say that $f$ is alternating if $\mathrm{Alt} f = f$, and symmetric if $\mathrm{Alt} f = 0$.
\end{definition}

Replacing inner product with dual pairing and unitary representations by isometric representations, we next introduce twising and cochains as in \cite{BS}.

%\begin{definition}
%Throughout, let $\pi: \Gamma \rightarrow \mathrm{Iso}(E)$ be an isometric representation of $\Gamma$ on $E$ where $\mathrm{Iso}(E)$ denotes the group of isometric linear automorphisms on $E$. Its contragradient representation $\bar{\pi}: \Gamma \rightarrow \mathrm{Iso}(E^*)$ is given by $\bar{\pi}_g = \pi^*_{g^{-1}}$ where $\pi^*$ is the transpose of $\pi$.
%\end{definition}

%Let $S^k(X,E)$ denote the vector space $\bigoplus_{\Sigma(k)} E$ isomorphic to the vector space of functions $\lbrace f: \Sigma(k) \rightarrow E \rbrace$.

%\begin{definition}
%Let $ \displaystyle \left( \bigoplus_{\Sigma(k, \Gamma)} E \right)_p$ denote the Banach space of functions $f \colon \Sigma(k, \Gamma) \rightarrow E$ for which the norm given by $$\Vert f \Vert_{(k,p)} = \left(\sum_{\sigma \in \Sigma(k, \Gamma)} \Vert f(\sigma) \Vert^p_E \dfrac{\omega(\sigma)}{(k+1)! \vert \Gamma_\sigma\vert}\right)^{1/p}$$ is finite.
%\end{definition}

\begin{definition}
Let $ \mathcal{E}^{(k,p)}(X,E)$ denote the semi-normed vector space of $k$-cochains $f \colon \Sigma(k) \rightarrow E $ for which the semi norm given by $$\Vert f \Vert_{(k,p)} = \left( \sum_{\sigma \in \Sigma(k, \Gamma)} \Vert f(\sigma) \Vert^p_E \dfrac{\omega(\sigma)}{(k+1)! \vert \Gamma_\sigma\vert}\right)^{1/p},$$ is finite.
\end{definition}

%\begin{corollary}
%$$\mathcal{E}^{(k,p)}(X,E) \big\slash N = \left( \bigoplus_{\Sigma(k, \Gamma)} E \right)_p,$$ where $N = \lbrace f \in \mathcal{E}^{(k,p)}(X,E) \colon \Vert f \Vert_{(k,p)} = 0 \rbrace.$ \qed
%\end{corollary} 

%\begin{definition}
%For any integer $k \geq 0$ let $$\left(\bigoplus_{\Sigma(k)} E\right)_p= \left\lbrace f \in \bigoplus_{\Sigma(k)} E \colon \Vert f \Vert^p_{(k,p,\omega)} < \infty \right\rbrace,$$ denote the Banach space of $p$-summable maps $f: \Sigma(k) \rightarrow E$ where $$\Vert f \Vert_{(k,p,\omega)} = \left( \sum_{s \in \Sigma(k, \Gamma)} \Vert f(s) \Vert^p_E \dfrac{\omega (s)}{(k+1)!\vert \Gamma_s \vert} \right)^{\frac{1}{p}},$$ for short $\Vert f \Vert^p_{(k,p)}$ if there is no risk of confusion.
%\end{definition}

\begin{definition}
For $f \in {\mathcal{E}^{(k,p)}}(X,E)^*$, we denote by $$\langle \phi, f \rangle_k = \sum_{\sigma \in \Sigma(k, \Gamma)} \langle \phi(\sigma), f(\sigma) \rangle_E \dfrac{\omega(\sigma)}{(k+1)! \vert \Gamma_\sigma \vert}$$ the dual pairing between $\mathcal{E}^{(k,p)}(X,E)$ and ${\mathcal{E}^{(k,p)}}(X,E)^*$.
\end{definition}

\begin{proposition} \label{cochaindual}
${\mathcal{E}^{(k,p)}}(X,E)^*\cong \mathcal{E}^{(k,p^*)}(X,E^*)$. \qed 
%and $\mathcal{E}^{(k,p^*)}(X,E^*) = \lbrace f \colon \Sigma(k) \rightarrow E^* \colon \Vert f \Vert_{(k,p^*)} < \infty \rbrace.$
\end{proposition}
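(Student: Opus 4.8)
The plan is to identify ${\mathcal{E}^{(k,p)}}(X,E)^*$ with a space of $E^*$-valued cochains by evaluating a functional against the ``standard basis'' of cochains supported on a single orbit representative. Concretely, fix $\sigma \in \Sigma(k,\Gamma)$ and $e \in E$, and let $\delta_\sigma^e$ be the cochain equal to $e$ on $\sigma$ and $0$ on the other representatives in $\Sigma(k,\Gamma)$ (extended arbitrarily — the seminorm only sees representatives). For $\phi \in {\mathcal{E}^{(k,p)}}(X,E)^*$ the assignment $e \mapsto \phi(\delta_\sigma^e)$ is a bounded linear functional on $E$, hence given by some $\psi(\sigma) \in E^*$ via the pairing $\langle \cdot,\cdot\rangle_E$. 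This defines a cochain $\psi \colon \Sigma(k,\Gamma) \to E^*$, which one then extends $\Gamma$-equivariantly to all of $\Sigma(k)$ using $\bar\pi$ (so that $\langle \psi,\cdot\rangle_k$ is well defined and independent of the choice of representatives); one must check this extension is consistent on stabilizers, which is where the isometry of $\bar\pi$ and the $\Gamma$-invariance of $\omega$ enter.

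Next I would verify the norm identity $\Vert\phi\Vert = \Vert\psi\Vert_{(k,p^*)}$. Since $\Vert\cdot\Vert_{(k,p)}$ is, up to the fixed positive weights $\omega(\sigma)/((k+1)!|\Gamma_\sigma|)$, an $\ell^p$-sum over $\Sigma(k,\Gamma)$ of the norms $\Vert f(\sigma)\Vert_E$, the space $\mathcal{E}^{(k,p)}(X,E)$ is isometrically a weighted $\ell^p$-direct sum $\bigoplus_{\sigma \in \Sigma(k,\Gamma)}^{\ell^p} E$ (after quotienting by the kernel of the seminorm, i.e. cochains vanishing on all representatives — I should remark that ``$\cong$'' is understood on this quotient, or equivalently restrict attention to cochains determined by their values on representatives). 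The dual of a weighted $\ell^p$-sum of reflexive Banach spaces is the correspondingly weighted $\ell^{p^*}$-sum of the duals; tracking the weights through the pairing in the Definition preceding the Proposition shows the weights match up exactly so that $\langle\phi,f\rangle_k = \sum_{\sigma\in\Sigma(k,\Gamma)}\langle\psi(\sigma),f(\sigma)\rangle_E\, \omega(\sigma)/((k+1)!|\Gamma_\sigma|)$ with no stray constants, giving the claimed isometry.

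The key steps in order: (1) define the map $\Phi \colon {\mathcal{E}^{(k,p)}}(X,E)^* \to \mathcal{E}^{(k,p^*)}(X,E^*)$, $\phi \mapsto \psi$ as above, and check $\psi$ is a well-defined $E^*$-valued cochain compatible with the $\Gamma$-action; (2) show $\langle\phi,f\rangle = \langle\psi,f\rangle_k$ for all $f$, first for finitely supported $f$ and then by density/continuity; (3) show $\Phi$ is injective (if $\psi=0$ then $\phi$ vanishes on a dense subspace) and surjective (given $\psi \in \mathcal{E}^{(k,p^*)}(X,E^*)$, the formula $f \mapsto \langle\psi,f\rangle_k$ defines an element of the dual, using reflexivity/Hölder termwise); (4) compute norms to upgrade ``$\simeq$'' to ``$\cong$''.

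The main obstacle I expect is purely bookkeeping rather than conceptual: making sure the combinatorial weight $\omega(\sigma)/((k+1)!|\Gamma_\sigma|)$ is threaded correctly through the duality so that the identification is a genuine isometry and not merely an isomorphism, and handling the fact that $\Vert\cdot\Vert_{(k,p)}$ is only a seminorm — so the statement is really about the dual of the associated normed (Banach) space, and the ``cochains'' on the $E^*$ side should likewise be regarded modulo the analogous kernel, or equivalently as functions on $\Sigma(k,\Gamma)$. Reflexivity of $E$ is used to know $E^{**}=E$, which guarantees that every candidate $\psi$ on the $E^*$ side actually arises from a functional (surjectivity), so it is essential and should be invoked explicitly.
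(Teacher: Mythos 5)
Your overall strategy is exactly the standard weighted $\ell^p$-duality argument that the paper leaves to the reader (the proposition is stated with no proof): after passing to the quotient by the kernel of the seminorm, $\mathcal{E}^{(k,p)}(X,E)$ is the weighted $\ell^p$-direct sum $\bigoplus_{\sigma\in\Sigma(k,\Gamma)}E$ with weights $\omega(\sigma)/((k+1)!\,|\Gamma_\sigma|)$, and the dual of such a sum is the weighted $\ell^{p^*}$-sum of the duals, with Hölder giving one inequality and near-extremal choices of $f(\sigma)$ giving the other. Two bookkeeping points: as written, $\langle\psi(\sigma),e\rangle_E=\phi(\delta^e_\sigma)$ produces $\langle\phi,f\rangle=\sum_\sigma\langle\psi(\sigma),f(\sigma)\rangle_E$ \emph{without} the weight, so you must normalize, $\langle\psi(\sigma),e\rangle_E=\phi(\delta^e_\sigma)\cdot\frac{(k+1)!\,|\Gamma_\sigma|}{\omega(\sigma)}$, which is harmless since the weights are finite and bounded away from $0$ ($\omega(\sigma)\ge 1$, $|\Gamma_\sigma|<\infty$); and reflexivity of $E$ is not actually needed for this proposition (for a purely atomic index set the duality $(\bigoplus_{\ell^p}E)^*\cong\bigoplus_{\ell^{p^*}}E^*$ holds for arbitrary Banach $E$) -- it is used later in the paper, not here.

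The one step you should delete rather than repair is the ``$\Gamma$-equivariant extension of $\psi$ via $\bar\pi$.'' The spaces $\mathcal{E}^{(k,p)}$ and $\mathcal{E}^{(k,p^*)}$ carry \emph{no} twisting condition (that is what distinguishes them from $L^{(k,p)}$), the seminorm and the pairing $\langle\cdot,\cdot\rangle_k$ only see values on the fixed set of representatives, so extending $\psi$ by zero off $\Sigma(k,\Gamma)$ suffices. Moreover the equivariant extension is in general impossible: twisting would force $\psi(\sigma)=\bar\pi_s\psi(\sigma)$ for all $s\in\Gamma_\sigma$, i.e.\ $\psi(\sigma)$ would have to be a fixed vector of the stabilizer, which an arbitrary element of $E^*$ need not be. So the ``consistency on stabilizers'' you propose to check would actually fail; fortunately nothing in your argument uses it, and with the zero extension (or, equivalently, viewing both sides as spaces of functions on $\Sigma(k,\Gamma)$ modulo the seminorm kernel, as you suggest) the proof goes through and gives the isometry.
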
   

\begin{definition}
Let $f: \Sigma(k) \rightarrow E$. If for every $g \in \Gamma$ and every $\sigma \in \Sigma(k)$ $$f(g \cdot \sigma) = \pi_g \cdot f(\sigma),$$ then we say that $f$ is twisted by $\pi$, or for short just twisted. 
\end{definition}

\begin{definition}
Let $C^{(k,p)}(X,E)$ denote the vector space of all alternating maps $f \colon \Sigma(k) \rightarrow E$ twisted by $\pi$.
\end{definition}
Those alternating maps twisted by $\pi$ whose $\Vert \cdot \Vert_{(k,p)}$ norm is finite are called $p$-integrable mod $\Gamma$ and we use the following notation:
\begin{definition}
Let $L^{(k,p)}(X, E) = \lbrace f \in C^{(k,p)}(X,E) \colon \Vert f\Vert_{(k,p)} < \infty \rbrace$ denote the vector subspace of all alternating $k$-cochains of $X$ twisted by $\pi$.  
\end{definition}

In particular, if $\Gamma$ acts cocompactly on $X$, then $L^{(k,p)}(X,E) = C^{(k,p)}(X,E)$ since then $X/\Gamma$ is compact, the set of representatives $\Sigma(k, \Gamma)$ is finite, and $\Vert f \Vert_{(k,p)} < \infty$ for all $f \in C^{(k,p)}(X,E)$. \\ We end this section by proving that $L^{(k,p)}(X, E)$ is a normed space with respect to $\Vert \cdot \Vert_{(k,p)}$. Towards this end we first show that $\Vert \cdot \Vert_{(k,p)}$ is independent of the set of representatives when $f \in L^{(k,p)}(X,E)$.

\begin{lemma} \label{independentofrepresentatives}
If $f \in L^{(k,p)}(X, E)$, then $\Vert f \Vert_{(k,p)}$ is independent of the choice of $\Sigma(k, \Gamma)$.
\end{lemma}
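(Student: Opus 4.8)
The plan is to show that changing the set of orbit representatives does not alter the value of $\Vert f\Vert_{(k,p)}$, by exploiting the twisting relation $f(g\cdot\sigma)=\pi_g\cdot f(\sigma)$ together with the fact that $\pi$ is \emph{isometric}. Concretely, let $\Sigma(k,\Gamma)$ and $\Sigma'(k,\Gamma)$ be two sets of representatives of the $\Gamma$-orbits in $\Sigma(k)$. For each $\sigma\in\Sigma(k,\Gamma)$ there is a unique $\sigma'\in\Sigma'(k,\Gamma)$ lying in the same orbit, say $\sigma'=g\cdot\sigma$ for some $g\in\Gamma$; this sets up a bijection $\sigma\mapsto\sigma'$ between the two sets of representatives. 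The key observations are then: (i) $\Vert f(\sigma')\Vert_E=\Vert \pi_g\cdot f(\sigma)\Vert_E=\Vert f(\sigma)\Vert_E$ since $\pi_g\in\mathrm{Iso}(E)$; (ii) $\omega(\sigma')=\omega(g\cdot\sigma)=\omega(\sigma)$ because $\Gamma$ preserves weights; and (iii) $\vert\Gamma_{\sigma'}\vert=\vert g\Gamma_\sigma g^{-1}\vert=\vert\Gamma_\sigma\vert$ since conjugate subgroups have the same cardinality. Hence each summand is unchanged, and summing over the bijection gives $\Vert f\Vert_{(k,p)}$ computed with $\Sigma'(k,\Gamma)$ equals $\Vert f\Vert_{(k,p)}$ computed with $\Sigma(k,\Gamma)$.

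First I would fix the two representative sets and the induced orbit-bijection, noting it is well-defined since each orbit meets each representative set in exactly one point. Next I would record the three invariances (i)--(iii) above; none requires more than one line, but it is worth stating (iii) carefully, as it is the only place where the finiteness of stabilizers (guaranteed by discreteness of $\Gamma$) is used to make $\vert\Gamma_\sigma\vert$ a finite positive integer, so that the ratio $\omega(\sigma)/((k+1)!\vert\Gamma_\sigma\vert)$ makes sense. Then I would reindex the sum defining $\Vert f\Vert_{(k,p)}$ along the bijection and substitute, concluding equality term by term.

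The only subtlety — and the step I expect to be the main (mild) obstacle — is that the defining sum is a priori an infinite series, so reindexing it along a bijection is only legitimate once one knows it converges; this is exactly why the lemma is stated for $f\in L^{(k,p)}(X,E)$, where $\Vert f\Vert_{(k,p)}<\infty$ by hypothesis. Since all terms are nonnegative, absolute convergence is automatic, and the rearrangement is valid. (For general $f\in C^{(k,p)}(X,E)$ the two expressions could both be $+\infty$, so there is nothing to prove in that case, but one should restrict attention to $L^{(k,p)}$ as stated to keep the manipulations clean.) I would close by remarking that the same argument shows $\Vert\cdot\Vert_{(k,p)}$ is genuinely a norm on $L^{(k,p)}(X,E)$: the only non-obvious axiom is positive-definiteness, which follows because if $\Vert f\Vert_{(k,p)}=0$ then $f$ vanishes on every representative, hence — again by the twisting relation and injectivity of each $\pi_g$ — on every simplex of $\Sigma(k)$.
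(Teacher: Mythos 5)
Your argument is correct and is essentially the paper's own proof: both reindex the sum along the orbit bijection between the two representative sets, using the twisting relation together with the isometry of $\pi$, the $\Gamma$-invariance of $\omega$, and the conjugacy of stabilizers $\Gamma_{g\cdot\sigma}=g\Gamma_\sigma g^{-1}$ to see that each summand is unchanged. Your extra remarks on rearrangement of the nonnegative series and on positive-definiteness are fine but not needed beyond what the paper does.
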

\begin{proof} Let $\Sigma'(k, \Gamma)$ be another set of representatives. Then,
\begin{align} \Vert f \Vert_{(k,p)}^p &= \sum_{\sigma' \in \Sigma'(k, \Gamma)} \Vert f(\sigma') \Vert^p_E \dfrac{\omega (\sigma')}{(k+1)!\vert \Gamma_{\sigma'} \vert} = \sum_{\sigma' \in \Sigma'(k, \Gamma)} \Vert f(g' \cdot \sigma') \Vert^p_E \dfrac{\omega (g' \cdot \sigma')}{(k+1)!\vert \Gamma_{g' \cdot \sigma'} \vert} \nonumber \\ &= \sum_{\sigma \in \Sigma(k, \Gamma)} \Vert f(\sigma) \Vert^p_E \dfrac{\omega (\sigma)}{(k+1)!\vert \Gamma_{\sigma} \vert}, \nonumber \end{align}
choosing for each $\sigma' \in \Sigma'(k, \Gamma)$ a $g' \in \Gamma$ such that $g' \cdot \sigma' = \sigma \in \Sigma(k, \Gamma)$ and observing that $f$ is twisted by $\pi$ and both the norm and $\omega$ are $\Gamma$-invariant.
%, the last equality follows from observing that the norm is $\Gamma$-invariant since the dual pairing is;  
%$$\langle f(g \cdot \sigma), f(g \cdot \sigma) \rangle_E = \langle \pi_g f(\sigma), \bar{\pi}_g f(\sigma) \rangle_E = \langle f(\sigma), \pi_g^* \pi^*_{g^{-1}} f(\sigma) \rangle_E = \langle f(\sigma), f(\sigma) \rangle_E.$$ 
\end{proof}

\begin{proposition}
$L^{(k,p)}(X,E) \subseteq \mathcal{E}^{(k,p)}(X,E)$ is a normed vector space.
\end{proposition} 
\begin{proof} It suffices to show that the seminorm $\Vert \cdot \Vert_{(k,p)}$ on $\mathcal{E}^{(k,p)}(X,E)$ restricted to $L^{(k,p)}(X,E)$ is a norm. To this end, suppose $\Vert f \Vert_{(k,p)} = 0$ for $f \in L^{(k,p)}(X,E)$. By Lemma \ref{independentofrepresentatives} we may assume $f(\sigma) = 0$ for all $\sigma \in \Sigma(k, \Gamma)$. Since $f(g \cdot \sigma) = \pi_g f(\sigma)$ and the action of $\Gamma$ is transitive on the orbits it follows that $f(\sigma)=0$ for all $\sigma \in \Sigma(k)$. \end{proof}

%Having restated the set up of \cite{BS} in terms of reflexive Banach spaces and isometric representations, the translation of the claims needed to generalize their results is clear. 

\section{Projecting $k$-cochains onto $L^{(k,p)}(X,E)$} \label{projsec}
In order to extend the framework of \cite{BS}, the dual space of the alternating and twisted cochains has at first to be identified up to isometric isomorphism. Following the scheme presented in \cite{Nowak}, we begin by stepwise constructing a continuous projection $P_L$ from $\mathcal{E}^{(k,p)}(X,E)$ onto $L^{(k,p)}(X,E)$.% allowing us to identify $L^{(k,p^*)}(X,E^*)$ with $\mathcal{E}^{(k,p^*)}(X,E^*) / \ker {P_L}^*$. 
\begin{definition} \label{projection1}
Define $\widetilde{P}: \mathcal{E}^{(k,p)}(X,E) \rightarrow \mathcal{E}^{(k,p)}(X,E)$ by \begin{displaymath}
\widetilde{P}f (\sigma) = \left\{ \begin{array}{ll}
\displaystyle \sum_{s \in \Gamma_\sigma} \pi_s f'(\sigma) & \textrm{if $\sigma \in \Sigma(k, \Gamma)$}\\
\displaystyle \sum_{\substack{h \in \Gamma \\ h \cdot \tau = \sigma}} \pi_{h} f'(\tau) & \textrm{if $\sigma \notin \Sigma(k, \Gamma)$ for $\tau \in \Sigma(k, \Gamma)$},\\
\end{array} \right.
\end{displaymath}
where $f': \Sigma(k, \Gamma) \rightarrow E$ is the restriction of $f: \Sigma(k) \rightarrow E$ to $\Sigma(k, \Gamma)$.
%\end{definition}
\end{definition}
This map is well defined, in particular we note that $\lbrace h \in \Gamma \colon h \cdot \tau = \sigma \rbrace = h\Gamma_\tau$. As the following proposition shows, $\widetilde{P}$ maps $k$-cochains to $k$-cochains twisted by $\pi$.
%If $\sigma \in \Sigma(k, \Gamma)$, then $Pf(\sigma)$ is well defined as $\Gamma_\sigma$ is finite by assumption. If $\sigma \notin \Sigma(k, \Gamma)$ then it remains to show that $A_{(\tau, \sigma)} =\lbrace h \in \Gamma \colon h \cdot \tau = \sigma \rbrace$ is finite when $\tau \in \Sigma(k, \Gamma)$ and $h \in \Gamma$ such that $h \cdot \tau = \sigma$. Without loss of generality we may assume $A_{(\tau, \sigma)} \neq \emptyset$. Hence, fix $h \in A_{(\tau, \sigma)}$. Now, if $s \in A_{(\tau, \sigma)}$ then $s \cdot \tau = h \cdot \tau \Rightarrow h^{-1}s \in \Gamma_\tau \Rightarrow s \in h \Gamma_\tau$ and so $A_{(\tau, \sigma)} \subseteq h\Gamma_\tau$. On the other hand if $s \in h\Gamma_\tau$, then $s=ht$ for some $t \in \Gamma_\tau$ so $s \cdot \tau = ht \cdot \tau = h \cdot \tau = \sigma$ so $h \Gamma_{\tau} \subseteq A_{(\tau, \sigma)}$. Thus $A_{(\tau, \sigma)} = h\Gamma_\tau$ is finite. \begin{flushright}$\Box$ \end{flushright}

\begin{proposition}
For $f \in \mathcal{E}^{(k,p)}(X,E)$, the $k$-cochain $\widetilde{P}f: \Sigma(k) \rightarrow E$ is twisted by $\pi$.
\end{proposition}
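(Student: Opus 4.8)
The plan is to reduce both clauses of Definition \ref{projection1} to a single formula and then exploit that the $\Gamma$-action permutes the fibres $\{h\in\Gamma : h\cdot\tau=\sigma\}$ compatibly. First I would observe that the two cases defining $\widetilde{P}$ are instances of one rule: given $\sigma\in\Sigma(k)$, let $\tau=\tau(\sigma)\in\Sigma(k,\Gamma)$ be the unique representative of the orbit $\Gamma\sigma$; then in both cases
$$\widetilde{P}f(\sigma)=\sum_{\substack{h\in\Gamma\\ h\cdot\tau=\sigma}}\pi_h f'(\tau),$$
since when $\sigma\in\Sigma(k,\Gamma)$ we have $\tau=\sigma$ and $\{h\in\Gamma:h\cdot\sigma=\sigma\}=\Gamma_\sigma$. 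As noted just after the definition, this fibre is a left coset of $\Gamma_\tau$, hence finite because stabilizers are finite, so the sum is a finite sum in $E$ and the manipulations below are legitimate.

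Next, fix $g\in\Gamma$ and $\sigma\in\Sigma(k)$. Since $g\cdot\sigma$ lies in the same $\Gamma$-orbit as $\sigma$, it has the same representative $\tau=\tau(\sigma)$, so the unified formula gives $\widetilde{P}f(g\cdot\sigma)=\sum_{h\cdot\tau=g\cdot\sigma}\pi_h f'(\tau)$. The key combinatorial point is that $h\cdot\tau=g\cdot\sigma$ if and only if $(g^{-1}h)\cdot\tau=\sigma$, i.e.\ the map $h'\mapsto g h'$ is a bijection from $\{h'\in\Gamma:h'\cdot\tau=\sigma\}$ onto $\{h\in\Gamma:h\cdot\tau=g\cdot\sigma\}$. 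Re-indexing the finite sum along this bijection and using that $\pi$ is a representation, so $\pi_{gh'}=\pi_g\pi_{h'}$, together with linearity of $\pi_g$, one obtains
$$\widetilde{P}f(g\cdot\sigma)=\sum_{\substack{h'\in\Gamma\\ h'\cdot\tau=\sigma}}\pi_{gh'}f'(\tau)=\pi_g\sum_{\substack{h'\in\Gamma\\ h'\cdot\tau=\sigma}}\pi_{h'}f'(\tau)=\pi_g\,\widetilde{P}f(\sigma),$$
which is exactly the twisting identity.

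There is no genuine obstacle here: the argument is purely formal once the definition is rewritten in the orbit-representative form above. The only points needing a word of justification are (i) the consistency of the two clauses — that setting $\tau=\sigma$ in the second clause recovers the first — and (ii) the finiteness of the fibres $\{h:h\cdot\tau=\sigma\}$, which is what permits the re-indexing and the extraction of $\pi_g$ from the sum; both follow from the remark after Definition \ref{projection1} and the discreteness of $\Gamma$. One could instead argue case-by-case according to whether $\sigma$ and $g\cdot\sigma$ are representatives, but the unified formula makes the case distinction unnecessary.
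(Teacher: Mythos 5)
Your proof is correct and rests on the same mechanism as the paper's: the fibre $\{h \in \Gamma : h\cdot\tau = g\cdot\sigma\}$ is the left translate by $g$ of $\{h \in \Gamma : h\cdot\tau = \sigma\}$, so re-indexing this finite sum and pulling $\pi_g$ out by linearity gives the twisting identity. The only difference is organizational: the paper runs that argument separately in each case according to whether $\sigma$ and $g\cdot\sigma$ lie in $\Sigma(k,\Gamma)$, whereas you first observe that both clauses of Definition \ref{projection1} are instances of the single formula $\widetilde{P}f(\sigma)=\sum_{h\cdot\tau=\sigma}\pi_h f'(\tau)$ with $\tau$ the orbit representative, which collapses the case analysis into one computation.
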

\begin{proof} Let $\sigma \in \Sigma(k)$. Then either $\sigma \in \Sigma(k, \Gamma)$ or $\sigma \notin \Sigma(k, \Gamma)$. Suppose at first $\sigma \in \Sigma(k, \Gamma)$. If $g \in \Gamma_\sigma$, then clearly $\pi_g \widetilde{P}f(\sigma) =  \widetilde{P}f(g \cdot \sigma)$. On the other hand, if $g \notin \Gamma_\sigma$ we get $$\widetilde{P}f(g \cdot \sigma) = \sum_{\substack{h \in \Gamma \\ h \cdot \sigma = g \cdot \sigma}} \pi_hf'(\sigma),$$
and $$ \pi_g\widetilde{P}f(\sigma) = \sum_{h \in g \Gamma_\sigma} \pi_h f'(\sigma).$$
But $\lbrace h \in \Gamma \colon h \cdot \sigma = g \cdot \sigma \rbrace = \lbrace h \in \Gamma \colon h \in g \Gamma_\sigma \rbrace$, so the claim holds for $\sigma \in \Sigma(k, \Gamma)$.  %If $A=B$, then $Pf(g \cdot \sigma) = \pi_gPf(\sigma)$. To show that $A \subseteq B$, suppose at first $h \in A$. Then, $h \cdot \tau = g \cdot \tau \Rightarrow \tau = g^{-1}h \cdot \tau \Rightarrow g^{-1}h \in \Gamma_\tau$. Hence, $h \in g \Gamma_\tau$, so $h \in B$. On the other hand, if $h \in B$, then $h = gt$ for some $t \in \Gamma_\tau$. Hence, $ h \cdot \tau = gt \cdot \tau = g \cdot \tau$. Thus, $h \in B$, proving $(2)$. Hence, we have proved that $Pf(g \cdot \sigma) = \pi_g Pf(\sigma)$ for all $g \in \Gamma$ and $\sigma \in \Sigma(k, \Gamma)$. 
Suppose $\sigma \notin \Sigma(k, \Gamma)$. If $g \cdot \sigma \in \Sigma(k, \Gamma)$, then
\begin{align}
\widetilde{P}f(g \cdot \sigma) &= \sum_{s \in \Gamma_{g \cdot \sigma}} \pi_s f'(g \cdot \sigma) = \sum_{s \in g\Gamma_{\sigma}g^{-1}} \pi_s f'(g \cdot \sigma) = \sum_{h \in \Gamma_\sigma} \pi_{ghg^{-1}}f'(g \cdot \sigma) \nonumber, 
\end{align} as $\Gamma_{g \cdot \sigma} = g \Gamma_\sigma g^{-1}$, and so
\begin{align}
\pi_g\widetilde{P}f(\sigma) &= \sum_{\substack{h \in \Gamma \\ hg \cdot \sigma = \sigma}} \pi_{gh}f'(g \cdot \sigma) = \sum_{\substack{h \in \Gamma \\ hg \in \Gamma_\sigma}} \pi_{gh}f'(g \cdot \sigma) = \sum_{h \in \Gamma_\sigma g^{-1}} \pi_{gh}f'(g \cdot \sigma) \nonumber \\ &= \sum_{h \in \Gamma_\sigma} \pi_{ghg^{-1}}f'(g \cdot \sigma) = \widetilde{P}f(g \cdot \sigma). \nonumber 
\end{align}
On the other hand, if $g \cdot \sigma \notin \Sigma(k, \Gamma)$, write $\widetilde{P}f(g \cdot \sigma) = \sum_{h \in A} \pi_h f'(\tau)$ where $A = \lbrace h \in \Gamma \colon h \cdot \tau = g \cdot \sigma \rbrace = gB$ for $B = \lbrace s \in \Gamma \colon s \cdot \tau = \sigma \rbrace$ and $\tau \in \Sigma(k, \Gamma)$. Hence,
\begin{align}
\pi_g \widetilde{P}f(\sigma) &= \sum_{\substack{s \in \Gamma \\ s \cdot \tau = \sigma}} \pi_{gs}f'(\tau) = \sum_{h \in g \lbrace s \in \Gamma \colon s \cdot \tau = \sigma \rbrace} \pi_h f'(\tau)= \sum_{h \in gB} \pi_h f'(\tau) \nonumber \\ &= \sum_{h \in A} \pi_h f'(\tau) = \widetilde{P}f(g \cdot \sigma), \nonumber
\end{align} so the claim holds for $\sigma \notin \Sigma(k, \Gamma)$ as well. \end{proof}
Recalling that $\Gamma$ is discrete, normalizing $\widetilde{P}$ as below gives a projection onto the twisted cochains.
\begin{definition} \label{projection2}
Define ${P}: \mathcal{E}^{(k,p)}(X,E) \rightarrow \mathcal{E}^{(k,p)}(X,E)$ by $$Pf(\sigma) = \dfrac{1}{\vert \Gamma_\sigma \vert}\widetilde{P}f(\sigma).$$ 
\end{definition}

\begin{proposition} \label{projection11}
${P}$ is a projection onto the twisted cochains.
\end{proposition}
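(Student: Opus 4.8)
The plan is to verify the three ingredients packed into the statement: that $P$ is linear, that $P^2=P$, and that its image is exactly the $k$-cochains twisted by $\pi$. Linearity is immediate, since $P$ is assembled from the linear maps $\pi_h$ together with division by the scalar $\vert\Gamma_\sigma\vert$. So the content reduces to two points: (i) $Pf$ is always twisted by $\pi$, and (ii) $Pf=f$ as soon as $f$ is already twisted. Granting these, $P^2=P$ follows because $Pf$ is twisted for every $f$, hence $P(Pf)=Pf$; and the image of $P$ is precisely the twisted cochains, being contained in them by (i) and containing every twisted $f=Pf$ by (ii).

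For (i) I would reuse the preceding proposition, which already shows $\widetilde{P}f$ is twisted by $\pi$, and combine it with the elementary identity $\Gamma_{g\cdot\sigma}=g\Gamma_\sigma g^{-1}$, whence $\vert\Gamma_{g\cdot\sigma}\vert=\vert\Gamma_\sigma\vert$. Then $Pf(g\cdot\sigma)=\vert\Gamma_{g\cdot\sigma}\vert^{-1}\widetilde{P}f(g\cdot\sigma)=\vert\Gamma_\sigma\vert^{-1}\pi_g\widetilde{P}f(\sigma)=\pi_g\,Pf(\sigma)$, so $Pf$ is twisted.

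For (ii) I would compute $Pf(\sigma)$ directly on the two kinds of simplexes. If $\sigma\in\Sigma(k,\Gamma)$ then for each $s\in\Gamma_\sigma$ the twist relation gives $\pi_s f(\sigma)=f(s\cdot\sigma)=f(\sigma)$, so $Pf(\sigma)=\vert\Gamma_\sigma\vert^{-1}\sum_{s\in\Gamma_\sigma}f(\sigma)=f(\sigma)$. If $\sigma\notin\Sigma(k,\Gamma)$, choose $\tau\in\Sigma(k,\Gamma)$ and $h\in\Gamma$ with $h\cdot\tau=\sigma$; using the already-recorded fact $\{h'\in\Gamma:h'\cdot\tau=\sigma\}=h\Gamma_\tau$ and the twist relation, every summand $\pi_{hs}f(\tau)$ with $s\in\Gamma_\tau$ equals $\pi_h f(s\cdot\tau)=\pi_h f(\tau)=f(h\cdot\tau)=f(\sigma)$, so $Pf(\sigma)=\vert\Gamma_\sigma\vert^{-1}\vert\Gamma_\tau\vert\,f(\sigma)$, which is $f(\sigma)$ because $\vert\Gamma_\sigma\vert=\vert\Gamma_{h\cdot\tau}\vert=\vert\Gamma_\tau\vert$ by the same conjugation identity used in (i).

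The argument is essentially bookkeeping; the only places to be careful are the coset description $\{h'\in\Gamma:h'\cdot\tau=\sigma\}=h\Gamma_\tau$ and the invariance $\vert\Gamma_{g\cdot\sigma}\vert=\vert\Gamma_\sigma\vert$, both of which were noted just before the statement, so I anticipate no genuine obstacle. If boundedness of $P$ is needed downstream, it is not asserted here and would naturally be handled together with the continuity of the full projection $P_L$.
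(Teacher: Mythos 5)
Your proof is correct and follows essentially the same route as the paper: the key computation that $Pf=f$ on representatives and on $\sigma=h\cdot\tau$ via $\{h'\colon h'\cdot\tau=\sigma\}=h\Gamma_\tau$ and $\vert\Gamma_\sigma\vert=\vert\Gamma_\tau\vert$ is exactly the paper's argument. You merely spell out what the paper dismisses as ``clearly'' ($P^2=P$ and surjectivity, deduced from the previous proposition that $\widetilde{P}f$ is twisted together with $\vert\Gamma_{g\cdot\sigma}\vert=\vert\Gamma_\sigma\vert$), which is a fine, slightly more explicit bookkeeping of the same idea.
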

\begin{proof} Clearly ${P}^2 = {P}$ and onto. Now, suppose $f$ is twisted and $\sigma \in \Sigma(k)$. If $\sigma \in \Sigma(k, \Gamma)$, then, recalling the discreteness assumption \begin{align}{P}f(\sigma) &= \dfrac{1}{\vert \Gamma_\sigma \vert}\sum_{s \in \Gamma_\sigma} \pi_sf'(\sigma)= \dfrac{1}{\vert \Gamma_\sigma \vert}\sum_{s \in \Gamma_\sigma} f'(s \cdot \sigma) \nonumber = \dfrac{1}{\vert \Gamma_\sigma \vert}\sum_{s \in \Gamma_\sigma} f'(\sigma) = f'(\sigma) =f(\sigma). \end{align} Similarly, for $\sigma \notin \Sigma(k, \Gamma)$
\begin{align}
{P}f(\sigma) &= \dfrac{1}{\vert \Gamma_\sigma \vert}\sum_{\substack{h \in \Gamma \\ h \cdot \tau = \sigma}} \pi_{h} f'(\tau) = \dfrac{1}{\vert \Gamma_\sigma \vert}\sum_{\substack{h \in \Gamma \\ h \cdot \tau = \sigma}} f(h \cdot \tau) = \dfrac{1}{\vert \Gamma_\sigma \vert}\sum_{\substack{h \in \Gamma \\ h \cdot \tau = \sigma}} f(\sigma) = f(\sigma), \nonumber
\end{align} as $\vert \lbrace h \in \Gamma \colon h \cdot \tau = \sigma \rbrace \vert = \vert h \Gamma_\tau \vert = \vert \Gamma_\tau \vert$ and $\vert \Gamma_\sigma \vert = \vert h \Gamma_\tau h^{-1}\vert = \vert \Gamma_\tau \vert$.\end{proof}

\begin{corollary}
$P$ is continuous with $\Vert {P}f \Vert_{(k,p)} \leq \Vert f \Vert_{(k,p)}$ for $f \in \mathcal{E}^{(k,p)}(X,E)$ with equality for $f \in L^{(k,p)}(X,E)$. 
\end{corollary}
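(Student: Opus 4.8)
The plan is to reduce the asserted inequality to a pointwise estimate on the chosen representatives, exploiting that the seminorm $\Vert\cdot\Vert_{(k,p)}$ only involves the values of a cochain on $\Sigma(k,\Gamma)$. Fix $f\in\mathcal{E}^{(k,p)}(X,E)$ and $\sigma\in\Sigma(k,\Gamma)$. By Definitions \ref{projection1} and \ref{projection2}, $Pf(\sigma)=\tfrac{1}{\vert\Gamma_\sigma\vert}\sum_{s\in\Gamma_\sigma}\pi_s f'(\sigma)$, and since $\sigma$ is a representative, $f'(\sigma)=f(\sigma)$. I would then apply the triangle inequality in $E$ together with the fact that each $\pi_s$ is an isometry to obtain
\begin{equation*}
\Vert Pf(\sigma)\Vert_E\leq\frac{1}{\vert\Gamma_\sigma\vert}\sum_{s\in\Gamma_\sigma}\Vert\pi_s f(\sigma)\Vert_E=\frac{1}{\vert\Gamma_\sigma\vert}\sum_{s\in\Gamma_\sigma}\Vert f(\sigma)\Vert_E=\Vert f(\sigma)\Vert_E.
\end{equation*}
Raising this to the $p$-th power, multiplying by $\tfrac{\omega(\sigma)}{(k+1)!\vert\Gamma_\sigma\vert}$, and summing over $\sigma\in\Sigma(k,\Gamma)$ gives $\Vert Pf\Vert_{(k,p)}\leq\Vert f\Vert_{(k,p)}$; combined with linearity of $P$ this is exactly the claimed continuity.

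For the equality clause I would simply note that any $f\in L^{(k,p)}(X,E)$ is in particular a cochain twisted by $\pi$, so Proposition \ref{projection11} gives $Pf=f$ and hence $\Vert Pf\Vert_{(k,p)}=\Vert f\Vert_{(k,p)}$ trivially; in fact the same reasoning shows equality for every twisted $f$ in $\mathcal{E}^{(k,p)}(X,E)$.

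I do not expect a genuine obstacle here. The only subtlety worth flagging is that one should not attempt to compare $Pf$ and $f$ simplex by simplex on all of $\Sigma(k)$, since $P$ genuinely alters values off the representative set; the point is precisely that $\Vert\cdot\Vert_{(k,p)}$ is computed from the representatives alone, which is what makes the naive convexity-and-isometry estimate above sufficient. (Note also that reflexivity of $E$ plays no role in this particular statement — only that the $\pi_s$ are isometries and that the stabilizers $\Gamma_\sigma$ are finite, so the averaging sums are genuine finite convex combinations.)
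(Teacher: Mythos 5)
Your proof is correct and is essentially the paper's argument spelled out in detail: the paper simply calls the bound a straightforward consequence of Definition \ref{projection2} (i.e.\ the averaging of the isometries $\pi_s$ over the finite stabilizer, which is exactly your triangle-inequality estimate on representatives), and obtains equality from the observation $Pf=f$ for twisted $f$, just as you do via Proposition \ref{projection11}. No issues.
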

\begin{proof} A straightforward consequence of Definition \ref{projection2}, and the observation that ${P}f=f$ for $f \in L^{(k,p)}(X,E)$.\end{proof}
%\begin{definition}
%Antisymmetrization. Let $\mathrm{Alt}: S^k(X,E) \rightarrow S^k(X,E)$ be defined pointwise for $\sigma = (v_0, \dots, v_k) \in \Sigma(k)$ as the map $$\mathrm{Alt}f(\sigma) =  \dfrac{1}{(k+1)!}\sum_{\alpha \in S_{k+1}}(-1)^{\mathrm{sgn}(\alpha)}f(v_{\alpha(0)}, \dots, v_{\alpha(k)}),$$ where $S_{k+1}$ is the symmetric group on $\lbrace 0, \dots, k\rbrace$ and $\mathrm{sgn}(\alpha)$ is the sign of the permutation $\alpha \in S_{k+1}$ such that it is $1$ for even permutations and $-1$ for odd permutations.  
%\end{definition}
%HERE SEEMS TO BE SOMETHING STRANGE: I changed $(3)$ below from $C^{(k,p)}(X,E)$ to $\bigoplus_{\Sigma(k)} E$ for if $f \in C^{(k,p)}(X,E)$ it is already alternating and $\mathrm{Alt}(\pi_s  f(\sigma)) = \pi_sf(\sigma)$.
%\begin{corollary} \label{propAlt}
%The map $\mathrm{Alt}$ is 
%\begin{enumerate}
%\item linear, $\mathrm{Alt}(f+g) = \mathrm{Alt}f + \mathrm{Alt}g$ and $\mathrm{Alt}(\lambda f) = \lambda \mathrm{Alt}f$ for $\lambda \in \mathbb{R}$;
%\item idempotent, $\mathrm{Alt} \circ \mathrm{Alt} = \mathrm{Alt};$
%\item $\mathrm{Alt}(\pi_gf(\sigma)) = \pi_g(\mathrm{Alt}f(\sigma))$ for $f \in S^k(X,E)$ and all $g \in \Gamma$.
%\end{enumerate}
%\end{corollary}
%Proof. $(1)$ and $(2)$ follow directly from the definition of $\mathrm{Alt}$. $(3)$ holds as  $\pi$ is linear.
%\begin{flushright}$\Box$ \end{flushright}
Thus, we have constructed a projection $P$ onto the cochains twisted by $\pi$. However, cochains in the image are not necessarily alternating and hence not necessarily in $L^{(k,p)}(X,E)$. Antisymmetrizing $P$ fixes this. We begin with the following observation:
\begin{corollary} \label{remainstwisted} \label{propAlt}
If $f$ is twisted, then $\mathrm{Alt} f$ is twisted.
\end{corollary}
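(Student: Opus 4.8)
The plan is to verify the twisting identity $\mathrm{Alt}f(g\cdot\sigma)=\pi_g\,\mathrm{Alt}f(\sigma)$ for all $g\in\Gamma$ and $\sigma\in\Sigma(k)$ by a direct computation, unwinding the definition of $\mathrm{Alt}$ and commuting the $\Gamma$-action past the reordering by permutations. The one structural fact that makes everything work is that $\Gamma$ acts on ordered simplexes by permuting \emph{vertices}, and this commutes with relabeling the slots by a permutation $\alpha\in S_{k+1}$. Concretely, if $\sigma=(v_0,\dots,v_k)$ so that $g\cdot\sigma=(gv_0,\dots,gv_k)$, then $\alpha^*(g\cdot\sigma)=(gv_{\alpha(0)},\dots,gv_{\alpha(k)})=g\cdot(\alpha^*\sigma)$; I would record this identity first as the key observation.

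First I would write, using this identity,
\[
\mathrm{Alt}f(g\cdot\sigma)=\frac{1}{(k+1)!}\sum_{\alpha\in S_{k+1}}\mathrm{sign}(\alpha)\,\alpha^*f(g\cdot\sigma)
=\frac{1}{(k+1)!}\sum_{\alpha\in S_{k+1}}\mathrm{sign}(\alpha)\,f\bigl(g\cdot(\alpha^*\sigma)\bigr).
\]
Then, since $f$ is twisted by $\pi$, each term satisfies $f(g\cdot(\alpha^*\sigma))=\pi_g\bigl(\alpha^*f(\sigma)\bigr)$. Substituting this in and using that $\pi_g$ is linear (hence commutes with the finite sum and the scalars $\mathrm{sign}(\alpha)/(k+1)!$), I obtain
\[
\mathrm{Alt}f(g\cdot\sigma)=\pi_g\!\left(\frac{1}{(k+1)!}\sum_{\alpha\in S_{k+1}}\mathrm{sign}(\alpha)\,\alpha^*f(\sigma)\right)=\pi_g\,\mathrm{Alt}f(\sigma),
\]
which is exactly the twisting condition for $\mathrm{Alt}f$.

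There is no real obstacle here; the only point requiring a moment's care is the bookkeeping in the identity $\alpha^*(g\cdot\sigma)=g\cdot(\alpha^*\sigma)$, i.e.\ that permuting the entries of an ordered tuple and applying a simplicial automorphism to each entry are operations that commute. Once that is in hand, the statement follows by linearity of $\pi_g$. (It is also worth noting in passing that the same computation shows $\mathrm{Alt}$ preserves the property of being symmetric, and that it is compatible with the semi-norm, though only the twisting claim is needed here.)
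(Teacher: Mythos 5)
Your proof is correct and is essentially the paper's argument: the paper compresses the same computation into one line ($\mathrm{Alt}f(g\cdot\sigma)=\mathrm{Alt}(\pi_g f(\sigma))=\pi_g\,\mathrm{Alt}f(\sigma)$, citing twistedness and linearity of $\pi_g$), implicitly relying on the commutation identity $\alpha^*(g\cdot\sigma)=g\cdot(\alpha^*\sigma)$ that you make explicit. Your write-up just spells out the bookkeeping in more detail; no difference in substance.
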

\begin{proof} Suppose $f: \Sigma(k) \rightarrow E$ is twisted. Then, $\mathrm{Alt}f(g \cdot \sigma) = \mathrm{Alt}(\pi_g f(\sigma)) = \pi_g (\mathrm{Alt}f(\sigma))$ for all $g \in \Gamma$ and $\sigma \in \Sigma(k)$, where we used the fact that $f$ is twisted in the first equality and linearity of $\pi_g$ in the last equality. Hence, $\mathrm{Alt}f$ is twisted as well. \end{proof}
%\begin{corollary}
%For $f \in \bigoplus_{\Sigma(k)} E $, the map $\mathrm{Alt}f$ is alternating and $$\mathrm{Alt}f(\sigma) = \dfrac{1}{2}f(\sigma) - \dfrac{1}{2}f(-\sigma),$$ where $-\sigma$ is the simplex $\sigma$ but with opposite orientation. 
%\end{corollary}
%Proof. $\mathrm{Alt}$ is alternating by standard arguments in linear algebra. The claimed equality follows by noticing that even permutations of the vertices of $\sigma$ give the oriented simplex $\sigma$ and odd permutations the oppositely oriented simplex $- \sigma$. \begin{flushright}$\Box$ \end{flushright}
\begin{corollary} \label{althelp}
Suppose $f \in \mathcal{E}^{(k,p)}(X,E)$, then $$\Vert \mathrm{Alt}f \Vert_{(k,p)}^p \leq (k+1)! \Vert f \Vert_{(k,p)}^p$$
\end{corollary}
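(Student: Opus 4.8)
The plan is to estimate $\mathrm{Alt}f$ pointwise with the triangle inequality, pass from $\Vert\mathrm{Alt}f(\sigma)\Vert_E$ to $\Vert\mathrm{Alt}f(\sigma)\Vert_E^{p}$ using convexity of $t\mapsto t^{p}$, and then reorganise the resulting $(k,p)$-sum over the symmetric group $S_{k+1}$.

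First, for each $\sigma\in\Sigma(k)$, since $\mathrm{sign}(\alpha)=\pm1$ and $\Vert\cdot\Vert_E$ is a norm,
\[
\Vert\mathrm{Alt}f(\sigma)\Vert_E=\Bigl\Vert\frac{1}{(k+1)!}\sum_{\alpha\in S_{k+1}}\mathrm{sign}(\alpha)\,\alpha^{*}f(\sigma)\Bigr\Vert_E\le\frac{1}{(k+1)!}\sum_{\alpha\in S_{k+1}}\Vert\alpha^{*}f(\sigma)\Vert_E .
\]
Raising to the power $p\ge1$ and applying Jensen's inequality, namely the power-mean inequality $(\tfrac1N\sum_{i}a_{i})^{p}\le\tfrac1N\sum_{i}a_{i}^{p}$ with $N=(k+1)!$, gives the pointwise bound $\Vert\mathrm{Alt}f(\sigma)\Vert_E^{p}\le\frac{1}{(k+1)!}\sum_{\alpha}\Vert\alpha^{*}f(\sigma)\Vert_E^{p}$. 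Substituting this into the definition of $\Vert\cdot\Vert_{(k,p)}$ and interchanging the two (absolutely convergent) sums yields
\[
\Vert\mathrm{Alt}f\Vert_{(k,p)}^{p}\le\frac{1}{(k+1)!}\sum_{\alpha\in S_{k+1}}\ \sum_{\sigma\in\Sigma(k,\Gamma)}\Vert\alpha^{*}f(\sigma)\Vert_E^{p}\,\frac{\omega(\sigma)}{(k+1)!\,|\Gamma_{\sigma}|}=\frac{1}{(k+1)!}\sum_{\alpha\in S_{k+1}}\Vert\alpha^{*}f\Vert_{(k,p)}^{p}.
\]

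The heart of the argument is then to show $\Vert\alpha^{*}f\Vert_{(k,p)}=\Vert f\Vert_{(k,p)}$ for each $\alpha$. The key observations are that relabelling vertices commutes with the $\Gamma$-action — since $\Gamma$ acts coordinatewise, $g\cdot(v_{\alpha(0)},\dots,v_{\alpha(k)})$ is the $\alpha$-relabelling of $g\cdot(v_0,\dots,v_k)$ — and that both $\omega$ and the stabilizer are insensitive to relabelling, namely $\omega(v_{\alpha(0)},\dots,v_{\alpha(k)})=\omega(v_0,\dots,v_k)$ and $\Gamma_{(v_{\alpha(0)},\dots,v_{\alpha(k)})}=\Gamma_{(v_0,\dots,v_k)}$. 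Hence $\alpha$ carries the transversal $\Sigma(k,\Gamma)$ onto another set of $\Gamma$-orbit representatives, and reindexing the inner sum by $\tau=(v_{\alpha(0)},\dots,v_{\alpha(k)})$ identifies $\Vert\alpha^{*}f\Vert_{(k,p)}^{p}$ with the $p$-th power of the $(k,p)$-seminorm of $f$ computed with respect to this alternative transversal. Lemma~\ref{independentofrepresentatives} then gives $\Vert\alpha^{*}f\Vert_{(k,p)}=\Vert f\Vert_{(k,p)}$, and summing the $(k+1)!$ equal terms collapses the last display to $\Vert\mathrm{Alt}f\Vert_{(k,p)}^{p}\le\Vert f\Vert_{(k,p)}^{p}$, a fortiori $\le(k+1)!\,\Vert f\Vert_{(k,p)}^{p}$.

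The one point requiring care — and the main obstacle — is that last step: Lemma~\ref{independentofrepresentatives} is available only for cochains twisted by $\pi$, because for a general element of $\mathcal{E}^{(k,p)}(X,E)$ the seminorm genuinely depends on the chosen transversal, and $\alpha^{*}f$ is twisted exactly when $f$ is. This is harmless in the intended application, where $\mathrm{Alt}$ is composed after the projection $P$ and so acts on a twisted cochain; alternatively one fixes once and for all a transversal $\Sigma(k,\Gamma)$ stable under vertex-relabellings. (The computation above in fact produces the sharper bound $\Vert\mathrm{Alt}f\Vert_{(k,p)}\le\Vert f\Vert_{(k,p)}$; the constant $(k+1)!$ in the statement, which is all that is needed later, also drops out if one replaces the power-mean inequality by the cruder $\bigl(\sum_{\alpha}a_{\alpha}\bigr)^{p}\le(k+1)!^{p}\sum_{\alpha}a_{\alpha}^{p}$.)
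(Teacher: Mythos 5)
Your proof is correct and follows essentially the same route as the paper's: a pointwise estimate of $\mathrm{Alt}f(\sigma)$ over $S_{k+1}$, followed by exchanging the sums and reindexing using the invariance of $\omega$ and of the stabilizers under relabelling of vertices. The only differences are that your power-mean inequality yields the sharper bound $\Vert \mathrm{Alt}f\Vert_{(k,p)}\leq\Vert f\Vert_{(k,p)}$ (the paper's cruder pointwise estimate gives the stated $(k+1)!$ directly), and that you correctly flag --- where the paper merely says ``since we sum over all representatives'' --- that the reindexing step, via Lemma~\ref{independentofrepresentatives}, really requires $f$ to be twisted or the transversal to be stable under relabellings, which is indeed how the corollary is applied.
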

\begin{proof} Since
\begin{align} \Vert \mathrm{Alt}f(\sigma) \Vert^p_E &= \dfrac{1}{(k+1)!^p} \Vert \sum_{\alpha \in S_{k+1}} \mathrm{sign}(\alpha)\alpha^*f(\sigma) \Vert^p_E \leq \dfrac{(k+1)!^p}{(k+1)!^p} \sum_{\alpha \in S_{k+1}}  \Vert \alpha^*f(\sigma) \Vert^p_E \nonumber \\ &= \sum_{\alpha \in S_{k+1}}\Vert \alpha^*f(\sigma) \Vert^p_E, \nonumber \end{align} it follows that \begin{align}\Vert \mathrm{Alt}f \Vert^p_{(k,p)} &= \sum_{\sigma \in \Sigma(k,\Gamma)} \Vert \mathrm{Alt} f(\sigma) \Vert^p \dfrac{\omega(\sigma)}{(k+1)!\vert \Gamma_\sigma \vert} \nonumber \\ &\leq \sum_{\alpha \in S_{k+1}} \sum_{\sigma \in \Sigma(k,\Gamma)} \Vert \alpha^* f(\sigma) \Vert^p \dfrac{\omega(\sigma)}{(k+1)!\vert \Gamma_\sigma \vert} \nonumber \\ &= (k+1)! \sum_{\sigma \in \Sigma(k,\Gamma)} \Vert  f(\sigma) \Vert^p \dfrac{\omega(\sigma)}{(k+1)!\vert \Gamma_\sigma \vert} \nonumber \end{align}
since we sum over all representatives in the last equality, and for $\sigma = (v_0, \dots, v_k)$, $\omega(v_{\alpha(0)}, \dots, v_{\alpha(k)}) = \omega(\sigma)$ and $\Gamma_{(v_{\alpha(0)}, \dots v_{\alpha(k)})} = \Gamma_\sigma$ for all $\alpha \in S_{k+1}$. \end{proof}

\begin{proposition}
The map $P_L : \mathcal{E}^{(k,p)}(X,E) \rightarrow \mathcal{E}^{(k,p)}(X,E)$, given by $$P_L= \mathrm{Alt} \circ {P}$$ defines a projection onto $L^{(k,p)}(X,E)$. In other words, the diagram $$
\xymatrix{
{\mathcal{E}^{(k,p)}(X,E)} \ar@{->>}[r]^P \ar@{->>}[dr]_{{P_L}} & 
{\lbrace \mathrm{k-cochain}_{\pi}\rbrace} \ar@{->>}[d]^{{\mathrm{Alt}}} \\ {} & {{L^{(k,p)}(X,E)}}
} $$ commutes. 
%such that $ P_L f (\sigma) = \dfrac{1}{2}\widetilde{P}f(\sigma) + \dfrac{1}{2}\widetilde{P}f(-\sigma),$defines a projection such that $P_L C^{(k,p)}(X,E) = L^{(k,p)}(X,E)$.
\end{proposition}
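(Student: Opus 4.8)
The plan is to verify three facts: that $P_L$ takes values in $L^{(k,p)}(X,E)$, that it acts as the identity on $L^{(k,p)}(X,E)$, and that it is bounded; once these are in hand, $P_L^2 = P_L$ follows and the commutativity of the triangle is just a restatement of the definition $P_L = \mathrm{Alt}\circ P$ read off against the mapping properties of $P$ and $\mathrm{Alt}$.

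First I would check that $P_Lf \in L^{(k,p)}(X,E)$ for every $f \in \mathcal{E}^{(k,p)}(X,E)$. By Proposition \ref{projection11} the cochain $Pf$ is twisted by $\pi$ (the normalisation by $1/\vert\Gamma_\sigma\vert$ in Definition \ref{projection2} is $\Gamma$-invariant, so it does not affect twistedness), hence $P_Lf = \mathrm{Alt}(Pf)$ is again twisted by Corollary \ref{remainstwisted}; it is alternating since $\mathrm{Alt}$ is idempotent. For finiteness of the norm, combine Corollary \ref{althelp} with the corollary giving $\Vert Pf\Vert_{(k,p)} \le \Vert f\Vert_{(k,p)}$:
\[
\Vert P_L f\Vert_{(k,p)}^p = \Vert \mathrm{Alt}(Pf)\Vert_{(k,p)}^p \le (k+1)!\,\Vert Pf\Vert_{(k,p)}^p \le (k+1)!\,\Vert f\Vert_{(k,p)}^p < \infty .
\]
Thus $P_Lf \in L^{(k,p)}(X,E)$, and at the same time $P_L$ is a bounded operator with $\Vert P_L f\Vert_{(k,p)} \le ((k+1)!)^{1/p}\Vert f\Vert_{(k,p)}$.

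Next I would show that $P_L$ restricts to the identity on $L^{(k,p)}(X,E)$. If $f \in L^{(k,p)}(X,E)$, then $f$ is twisted, so $Pf = f$ by Proposition \ref{projection11}, and $f$ is alternating, so $\mathrm{Alt} f = f$; hence $P_L f = \mathrm{Alt}(Pf) = \mathrm{Alt} f = f$. In particular $P_L$ is surjective onto $L^{(k,p)}(X,E)$, and since $P_L f$ already lies in $L^{(k,p)}(X,E)$ for arbitrary $f$, applying $P_L$ once more leaves it fixed, i.e. $P_L^2 = P_L$. This exhibits $P_L$ as a bounded projection of $\mathcal{E}^{(k,p)}(X,E)$ onto $L^{(k,p)}(X,E)$. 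The commutativity of the diagram is then immediate: $P$ maps $\mathcal{E}^{(k,p)}(X,E)$ onto the twisted $k$-cochains (Proposition \ref{projection11}), $\mathrm{Alt}$ maps those onto $L^{(k,p)}(X,E)$ (it is the identity on $L^{(k,p)}(X,E)$ by the previous step, and by Corollaries \ref{remainstwisted} and \ref{althelp} it sends a twisted cochain of finite norm to an alternating twisted cochain of finite norm), and by definition their composite is $P_L$.

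I do not expect a serious obstacle here, since every ingredient has been prepared in the preceding corollaries. The only point needing a little care is the order of composition: one must apply $\mathrm{Alt}$ \emph{after} $P$, because it is $\mathrm{Alt}$ that forces the output to be alternating while Corollary \ref{remainstwisted} guarantees that doing so does not destroy the twisting already secured by $P$; composing in the opposite order would not obviously produce an alternating cochain.
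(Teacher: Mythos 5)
Your proof is correct and follows essentially the same route as the paper: it rests on Proposition \ref{projection11} ($P$ projects onto the twisted cochains) and Corollary \ref{remainstwisted} ($\mathrm{Alt}$ preserves twisting), with the norm bound from Corollary \ref{althelp}. The only difference is that you spell out the idempotency and the finiteness of $\Vert P_Lf\Vert_{(k,p)}$, which the paper dismisses as clear (and later records separately as Proposition \ref{P_Lcontinuous}).
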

\begin{proof} Clearly $P_L^2 = P_L$. By Proposition \ref{projection11} $P$ is a projection onto the twisted cochains, and since $\mathrm{Alt}$ preserves twisting by Corollary \ref{remainstwisted}, $P_L$ is a projection onto $L^{(k,p)}(X,E)$.\end{proof}

\begin{proposition} \label{P_Lcontinuous}
$P_L$ is continuous with $\Vert P_Lf \Vert_{(k,p)}^p \leq (k+1)! \Vert f \Vert_{(k,p)}^p$. 
\end{proposition}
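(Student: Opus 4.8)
The plan is to chain together the two continuity estimates already established in the excerpt: the bound $\Vert Pf\Vert_{(k,p)} \leq \Vert f\Vert_{(k,p)}$ from the Corollary following Proposition~\ref{projection11}, and the alternation bound $\Vert \mathrm{Alt}\, g\Vert_{(k,p)}^p \leq (k+1)!\,\Vert g\Vert_{(k,p)}^p$ from Corollary~\ref{althelp}. Since $P_L = \mathrm{Alt}\circ P$ by definition, applying Corollary~\ref{althelp} to the cochain $g = Pf \in \mathcal{E}^{(k,p)}(X,E)$ gives
\[
\Vert P_L f\Vert_{(k,p)}^p = \Vert \mathrm{Alt}(Pf)\Vert_{(k,p)}^p \leq (k+1)!\,\Vert Pf\Vert_{(k,p)}^p,
\]
and then the contraction property of $P$ yields $\Vert Pf\Vert_{(k,p)}^p \leq \Vert f\Vert_{(k,p)}^p$, so that $\Vert P_L f\Vert_{(k,p)}^p \leq (k+1)!\,\Vert f\Vert_{(k,p)}^p$ as claimed. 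This immediately gives continuity of the linear map $P_L$, with operator norm at most $((k+1)!)^{1/p}$.

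The only subtlety worth checking is that Corollary~\ref{althelp} is genuinely applicable to $Pf$: it requires only membership in $\mathcal{E}^{(k,p)}(X,E)$, and $P$ is defined as a map $\mathcal{E}^{(k,p)}(X,E)\to\mathcal{E}^{(k,p)}(X,E)$, so $Pf$ is certainly in that space — there is no circularity and no hidden hypothesis. One should also note that the inequality in Corollary~\ref{althelp} does not require $f$ to be twisted or alternating, so feeding it the (twisted but not necessarily alternating) cochain $Pf$ is fine. I do not foresee a real obstacle here; the statement is essentially a formal composition of two Lipschitz-type bounds already in hand, and the proof is a two-line computation. If anything, the ``hard part'' is purely bookkeeping: being careful that the factor $(k+1)!$ enters at the $p$-th power level exactly as in Corollary~\ref{althelp}, and not accidentally taking a $p$-th root somewhere. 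Thus the proof will simply substitute $g=Pf$ into Corollary~\ref{althelp} and then invoke the contraction estimate for $P$.
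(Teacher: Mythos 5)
Your proof is correct and takes essentially the same route as the paper: the paper's argument is also just the contraction property of $P$ (re-derived inline by averaging over $\Gamma_\sigma$ and using that each $\pi_s$ is an isometry) combined with Corollary \ref{althelp}. You simply cite the already-established bound $\Vert Pf \Vert_{(k,p)} \leq \Vert f \Vert_{(k,p)}$ instead of unwinding the definition of $P$ pointwise, which is a harmless repackaging of the same estimate.
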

\begin{proof} \begin{align}
\Vert P_Lf \Vert_{(k,p)}^p &= \sum_{\sigma \in \Sigma(k, \Gamma)} \Vert P_Lf(\sigma) \Vert^p_E \dfrac{\omega(\sigma)}{(k+1)!\vert \Gamma_\sigma \vert} \nonumber \\ 
&= \sum_{\sigma \in \Sigma(k, \Gamma)} \left\Vert \mathrm{Alt}\left( \dfrac{1}{\vert \Gamma_\sigma \vert}\sum_{s \in \Gamma_\sigma} \pi_s f'(\sigma) \right)\right\Vert^p_E \dfrac{\omega(\sigma)}{(k+1)!\vert \Gamma_\sigma \vert} \nonumber \\ &\substack{{} \\ =} \sum_{\sigma \in \Sigma(k, \Gamma)} \left\Vert \dfrac{1}{\vert \Gamma_\sigma \vert}\sum_{s \in \Gamma_\sigma} \pi_s \mathrm{Alt} f'(\sigma) \right\Vert^p_E \dfrac{\omega(\sigma)}{(k+1)!\vert \Gamma_\sigma \vert} \nonumber \\ & \leq \sum_{\sigma \in \Sigma(k, \Gamma)} \dfrac{1}{\vert \Gamma_\sigma \vert^p} \vert \Gamma_\sigma \vert^p \max_{s \in \Gamma_s} \left\lbrace \Vert \pi_s \mathrm{Alt}f'(\sigma)\Vert^p_E \right\rbrace \dfrac{\omega(\sigma)}{(k+1)!\vert \Gamma_\sigma \vert} \nonumber \\ &= \sum_{\sigma \in \Sigma(k, \Gamma)} \Vert \mathrm{Alt}f'(\sigma)\Vert^p_E \dfrac{\omega(\sigma)}{(k+1)!\vert \Gamma_\sigma \vert} \nonumber \\ &\substack{{} \\ \leq} \sum_{\sigma \in \Sigma(k, \Gamma)} (k+1)! \Vert f'(\sigma)\Vert^p_E \dfrac{\omega(\sigma)}{(k+1)!\vert \Gamma_\sigma \vert} \nonumber \\ &= (k+1)! \Vert f \Vert_{(k,p)}^p, \nonumber
\end{align}
where we have used Corollary \ref{althelp} in the last inequality.

%\begin{enumerate}
%\item follows from observing that if $\sigma = (v_0, \dots, v_k) \in \Sigma(k,\Gamma)$, then $(v_{\alpha(0)}, \dots, v_{\alpha(k)}) \in \Sigma(k, \Gamma)$ for each $\alpha \in S_{k+1}$ together with the triangle inequality. For example, suppose $k=1$ and $e=(a,b) \in \Sigma(1, \Gamma)$. Then $-e=(b,a) \in \Sigma(1, \Gamma)$, and looking at these terms in the sum we observe that $$\Vert \mathrm{Alt}f'(e)\Vert^p_E = \dfrac{1}{2^p} \Vert f'(e) - f'(-e)\Vert^p \leq  \dfrac{2^p}{2^p} \left( \Vert f'(e) \Vert^p_E + \Vert f'(-e) \Vert^p_E \right),$$ and similarly
%$$\Vert \mathrm{Alt}f'(-e)\Vert^p_E = \dfrac{1}{2^p} \Vert f'(-e) - f'(e)\Vert^p \leq \Vert f'(e) \Vert^p_E + \Vert f'(-e) \Vert^p_E,$$ thus, as both terms in the sum have the same coefficients, namely $\Gamma_e = \Gamma_{-e}$ and $\omega(e) = \omega(-e)$ we conclude that
%$$\sum_{\sigma \in \Sigma(1, \Gamma)} \Vert \mathrm{Alt}f'(\sigma)\Vert^p_E \dfrac{\omega(\sigma)}{2!\vert \Gamma_\sigma \vert} = \sum_{\sigma \in \Sigma(1, \Gamma)} 2 \Vert f'(\sigma)\Vert^p_E \dfrac{\omega(\sigma)}{2!\vert \Gamma_\sigma \vert},$$ with similar reasoning extending to $k > 1$.
%\end{enumerate}
\end{proof}

\begin{corollary} \label{Lclosed}
$\displaystyle L^{(k,p)}(X,E) \subseteq \mathcal{E}^{(k,p)}(X,E)$ is closed. \qed
\end{corollary}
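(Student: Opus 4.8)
The plan is to realize $L^{(k,p)}(X,E)$ as the kernel of a bounded operator. Recall that $P_L = \mathrm{Alt}\circ P$ is a projection of $\mathcal{E}^{(k,p)}(X,E)$ onto $L^{(k,p)}(X,E)$, and that by Proposition~\ref{P_Lcontinuous} it is bounded, with $\Vert P_L f\Vert_{(k,p)}^p \le (k+1)!\,\Vert f\Vert_{(k,p)}^p$. Since $P_L^2 = P_L$, its image coincides with its fixed-point set,
\[
L^{(k,p)}(X,E) \;=\; \mathrm{im}\, P_L \;=\; \{\, f \in \mathcal{E}^{(k,p)}(X,E) : P_L f = f \,\} \;=\; \ker(\mathrm{Id} - P_L),
\]
the inclusion $\mathrm{im}\,P_L \subseteq \{f : P_L f = f\}$ being precisely idempotency and the reverse inclusion being trivial.

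Now $\mathrm{Id} - P_L$ is bounded, being the difference of the identity of $\mathcal{E}^{(k,p)}(X,E)$ and the bounded operator $P_L$. Hence $L^{(k,p)}(X,E) = (\mathrm{Id} - P_L)^{-1}(0)$ is closed in $\mathcal{E}^{(k,p)}(X,E)$, as the preimage of a closed set under a continuous map. In sequential language this is just: if $f_n \in L^{(k,p)}(X,E)$ and $\Vert f_n - f\Vert_{(k,p)} \to 0$, then $f_n = P_L f_n \to P_L f$ by continuity of $P_L$, so $f = P_L f \in L^{(k,p)}(X,E)$.

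I do not expect a substantive obstacle: once the bounded projection $P_L$ is in place the assertion is formal. The one spot where anything is genuinely used is the identity $\mathrm{im}\, P_L = \ker(\mathrm{Id} - P_L)$, which depends on $P_L$ being idempotent; this in turn is exactly why $P$ and $\mathrm{Alt}$ were each built to be idempotent and why $\mathrm{Alt}$ was checked to preserve twisting (Corollary~\ref{remainstwisted}), guaranteeing $P_L^2 = P_L$.
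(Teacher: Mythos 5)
Your argument is correct and is precisely the argument the paper intends: the corollary is stated with no written proof, as an immediate consequence of $P_L$ being a continuous idempotent onto $L^{(k,p)}(X,E)$ (Proposition \ref{P_Lcontinuous}), so that $L^{(k,p)}(X,E)=\ker(\mathrm{Id}-P_L)$ is closed. Nothing differs in substance from the paper's (implicit) reasoning.
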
 

\section{$L^{(k,p)}(X,E)^* \cong L^{(k,p^*)}(X,E^*)$} \label{L}
Having constructed a continuous projection from $\mathcal{E}^{(k,p)}(X,E)$ onto $L^{(k,p)}(X,E)$ we show that the dual of $L^{(k,p)}(X,E)$ can be identified up to isometric isomorphism with $\mathcal{E}^{(k,p^*)}(X,E^*) / \mathrm{Ann}(L^{(k,p)}(X,E))$, cf. Corollary \ref{fin2}, and finally that the latter is isometrically isomorphic to $L^{(k,p^*)}(X,E^*)$, cf. Proposition \ref{isometric1} and \ref{fin4} below. Towards this end, recall that by the annihilator of a subspace $M \subseteq E$ we mean the vector space $\mathrm{Ann}(M) = \lbrace x \in E^* \colon \langle y,x \rangle_E = 0 \,\, \forall y \in M \rbrace$ of all bounded linear functionals on $E$ that vanish on $M$. The following fact contains the idea of the proof:

\begin{proposition} \label{dualcomposition1} \label{firstdual}
\cite{Douglas} Suppose $E$ is a Banach space such that $E= M \oplus N$ and denote by $P$ the corresponding projection onto $M$. Then, 
\begin{enumerate}
\item $\ker P^* = \mathrm{Ann}(M)$ and $\mathop{\mathrm{im}}P^* = \mathrm{Ann}(N)$;
\item $E^* \simeq \mathrm{Ann}(N) \oplus \mathrm{Ann}(M)$;
\item if $M$ is closed $M^* \cong E^* / \mathrm{Ann}(M).$ \qed
\end{enumerate} 
\end{proposition}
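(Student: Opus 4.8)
The plan is to exploit that the transpose of a bounded projection is again a bounded projection, read (1) and (2) off from this, and then obtain (3) from the classical duality between a closed subspace and a quotient, with Hahn--Banach supplying the isometry.

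First I would note that since $P$ is bounded and $P^{2}=P$, its transpose $P^{*}\colon E^{*}\to E^{*}$ is bounded and satisfies $(P^{*})^{2}=(P^{2})^{*}=P^{*}$, so $P^{*}$ is a bounded idempotent; hence $E^{*}=\operatorname{im}P^{*}\oplus\ker P^{*}$ as a topological direct sum, with $\operatorname{im}P^{*}=\ker(I_{E^{*}}-P^{*})=\ker\big((I_{E}-P)^{*}\big)$. For the kernel, $\phi\in\ker P^{*}$ means $\langle Px,\phi\rangle_{E}=\langle x,P^{*}\phi\rangle_{E}=0$ for all $x\in E$, i.e.\ $\phi$ annihilates $\operatorname{im}P=M$; thus $\ker P^{*}=\mathrm{Ann}(M)$. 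Applying the identical computation to the complementary projection $I_{E}-P$, whose image is $N$, gives $\ker\big((I_{E}-P)^{*}\big)=\mathrm{Ann}(N)$, that is, $\operatorname{im}P^{*}=\mathrm{Ann}(N)$. This is (1), and (2) follows at once from the splitting $E^{*}=\operatorname{im}P^{*}\oplus\ker P^{*}$.

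For (3), with $M$ closed (which in fact already follows from $M=\ker(I_{E}-P)$), I would consider the restriction map $\rho\colon E^{*}\to M^{*}$, $\rho(\phi)=\phi|_{M}$. It is linear with $\Vert\rho(\phi)\Vert_{M^{*}}\le\Vert\phi\Vert_{E^{*}}$, its kernel is by definition $\mathrm{Ann}(M)$, and it is onto because every element of $M^{*}$ extends to $E$ by the Hahn--Banach theorem; hence $\rho$ induces a continuous linear bijection $\bar\rho\colon E^{*}/\mathrm{Ann}(M)\to M^{*}$. The one point requiring care --- and the step I expect to be the crux --- is that $\bar\rho$ is \emph{isometric}. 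Here, for $\phi\in E^{*}$, any representative $\psi$ of the coset $[\phi]$ satisfies $\psi|_{M}=\phi|_{M}$, so $\Vert\psi\Vert_{E^{*}}\ge\sup_{x\in M,\ \Vert x\Vert_{E}\le 1}|\langle x,\phi\rangle_{E}|=\Vert\phi|_{M}\Vert_{M^{*}}$; taking the infimum over representatives gives $\Vert[\phi]\Vert\ge\Vert\bar\rho([\phi])\Vert_{M^{*}}$. Conversely, the norm-preserving Hahn--Banach extension of $\phi|_{M}$ is a representative of $[\phi]$ of norm exactly $\Vert\phi|_{M}\Vert_{M^{*}}$, so $\Vert[\phi]\Vert\le\Vert\bar\rho([\phi])\Vert_{M^{*}}$. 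Hence $\bar\rho$ is an isometric isomorphism and $M^{*}\cong E^{*}/\mathrm{Ann}(M)$.

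Since all three assertions are classical facts of Banach space duality, in the paper it is cleanest simply to cite \cite{Douglas}; the sketch above records their content. A variant proof of (3) reuses (1)--(2): $P^{*}$ identifies $\mathrm{Ann}(N)=\operatorname{im}P^{*}$ with $E^{*}/\ker P^{*}=E^{*}/\mathrm{Ann}(M)$, and $\mathrm{Ann}(N)$ is naturally $M^{*}$ via restriction; but this route only produces an isomorphism $\simeq$, so the Hahn--Banach norm computation above is still needed to upgrade it to an isometric isomorphism $\cong$.
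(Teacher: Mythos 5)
Your proof is correct; the paper itself supplies no argument here, simply citing Douglas and closing the statement with \qed, and your sketch is precisely the standard duality argument that citation stands for: $P^{*}$ is a bounded idempotent with $\ker P^{*}=\mathrm{Ann}(M)$ and $\operatorname{im}P^{*}=\mathrm{Ann}(N)$, giving (1) and (2), and the restriction map together with the Hahn--Banach norm-preserving extension gives the isometry in (3). Your side remark that closedness of $M$ is automatic from $M=\ker(I_{E}-P)$ is also correct, so nothing is missing.
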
 
Let $L^{(k,p)}_-(X,E)$ denote the closed complement of $L^{(k,p)}(X,E)$ in $\mathcal{E}^{(k,p)}(X,E)$. That is $ L^{(k,p)}_-(X,E) = \ker \, {P}_{L}$, or in other words:

%MINOR ERROR CORRECTED IN COROLLARY \ref{symmetry}: in the original version there was a small error in the statement, I had written for $f \in L^{(k,p)}(X,E)$ where it should be $f \in L^{(k,p)}_-(X,E)$, and argued wrongly as previously in proposition \ref{P_Lcontinuous}. So, I tried the following: \\
\begin{corollary} \label{symmetry} \label{fin1}
$L_-^{(k,p)}(X,E) = \lbrace f \in \mathcal{E}^{(k,p)}(X,E) \colon \mathrm{Alt}f(\sigma) = 0 \,\, \forall \sigma \in \Sigma(k, \Gamma) \rbrace$ is a closed subspace of $\mathcal{E}^{(k,p)}(X,E).$
%$\displaystyle L^{(k,p)}_-(X,E) \subseteq \mathcal{E}^{(k,p)}(X,E)$ is closed and for $f \in L^{(k,p)}_-(X,E)$, $f$ is symmetric for all $\sigma \in \Sigma(k, \Gamma)$, i.e. $\mathrm{Alt}f(\sigma)= 0$. 
\end{corollary}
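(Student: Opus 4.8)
The plan is to treat the two assertions separately: the explicit description of $L_-^{(k,p)}(X,E) := \ker P_L$ via the alternation operator, and its closedness. Closedness comes essentially for free: by Proposition~\ref{P_Lcontinuous} the operator $P_L$ is bounded on $\mathcal{E}^{(k,p)}(X,E)$, and it is idempotent, so $L_-^{(k,p)}(X,E) = \ker P_L = (\mathrm{id}-P_L)^{-1}(\{0\})$ is a closed linear subspace; no computation is needed for this half.

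For the identity $\ker P_L = \{\, f \in \mathcal{E}^{(k,p)}(X,E) : \mathrm{Alt}\, f(\sigma) = 0 \ \text{for every}\ \sigma \in \Sigma(k,\Gamma)\,\}$ I would unwind $P_L = \mathrm{Alt}\circ P$. Fix $f \in \mathcal{E}^{(k,p)}(X,E)$. Since $Pf$ is twisted by $\pi$ (Proposition~\ref{projection11}) and $\mathrm{Alt}$ preserves twisting (Corollary~\ref{remainstwisted}), the cochain $P_L f = \mathrm{Alt}(Pf)$ is twisted; and, as in the proof that $\Vert\cdot\Vert_{(k,p)}$ is a norm on $L^{(k,p)}(X,E)$, a twisted cochain vanishes identically as soon as it vanishes on a set of orbit representatives, by transitivity of $\Gamma$ on the orbits. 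Hence $f \in \ker P_L$ iff $P_L f(\sigma) = 0$ for every $\sigma \in \Sigma(k,\Gamma)$. I would then invoke the computation already present in the proof of Proposition~\ref{P_Lcontinuous}: for $\sigma \in \Sigma(k,\Gamma)$, $Pf(\sigma) = \tfrac{1}{\vert \Gamma_\sigma\vert}\sum_{s\in\Gamma_\sigma}\pi_s f(\sigma)$, and because $\mathrm{Alt}$ antisymmetrizes only in the simplex entries while each $\pi_s$ acts linearly on the coefficients in $E$, these operations commute, so $P_L f(\sigma) = \tfrac{1}{\vert \Gamma_\sigma\vert}\sum_{s\in\Gamma_\sigma}\pi_s\bigl(\mathrm{Alt}\, f(\sigma)\bigr)$. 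From this the inclusion ``$\supseteq$'' is immediate: if $\mathrm{Alt}\, f$ vanishes on $\Sigma(k,\Gamma)$ then so does $P_L f$, hence $P_L f = 0$.

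The reverse inclusion ``$\subseteq$'' — recovering $\mathrm{Alt}\, f(\sigma) = 0$ from $P_L f(\sigma) = 0$ — is the step I expect to require the most care, since it amounts to undoing the stabilizer average $\tfrac{1}{\vert \Gamma_\sigma\vert}\sum_{s\in\Gamma_\sigma}\pi_s$ applied to $\mathrm{Alt}\, f(\sigma)$. I would organize it by noting that $\mathrm{Alt}\, f$ is already alternating, so applying $\mathrm{Alt}$ a second time is redundant and $P_L$ acts on $\mathrm{Alt}\, f$ exactly through that averaging; thus $P_L f(\sigma) = 0$ says precisely that $\mathrm{Alt}\, f(\sigma)$ is annihilated by it, and one then concludes $\mathrm{Alt}\, f(\sigma) = 0$ from the way $P$ reconstructs a cochain from its values on orbit representatives together with the behaviour of $\pi$ on the (finite) stabilizer $\Gamma_\sigma$. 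With both inclusions established and closedness already recorded, this completes the proof.
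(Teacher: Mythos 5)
Your overall route is the paper's own: closedness comes from the continuity of the idempotent $P_L$ (Proposition \ref{P_Lcontinuous}), and the set identity is attacked by writing $P_Lf$ on a representative $\sigma$ as the stabilizer average of the alternation, $P_Lf(\sigma)=\frac{1}{\vert\Gamma_\sigma\vert}\sum_{s\in\Gamma_\sigma}\pi_s\,\mathrm{Alt}f(\sigma)$, combined with the remark that a twisted cochain vanishing on $\Sigma(k,\Gamma)$ vanishes everywhere. The closedness argument and the inclusion ``$\supseteq$'' are fine as you present them.

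The genuine gap is exactly at the step you yourself single out and then do not carry out: passing from $P_Lf=0$, i.e.\ $\frac{1}{\vert\Gamma_\sigma\vert}\sum_{s\in\Gamma_\sigma}\pi_s\bigl(\mathrm{Alt}f(\sigma)\bigr)=0$ for every representative $\sigma$, to $\mathrm{Alt}f(\sigma)=0$. Saying that one ``concludes this from the way $P$ reconstructs a cochain from its values on orbit representatives together with the behaviour of $\pi$ on the finite stabilizer'' restates the goal; it is not an argument. And the step is not a formality that linearity or finiteness of $\Gamma_\sigma$ hands you: $\frac{1}{\vert\Gamma_\sigma\vert}\sum_{s\in\Gamma_\sigma}\pi_s$ is the averaging idempotent of the finite group $\Gamma_\sigma$ acting isometrically on $E$, whose image is the fixed subspace and whose kernel is typically nonzero --- for instance any vector $v$ with $\pi_s v=-v$ for an order-two element $s\in\Gamma_\sigma$ is annihilated by it --- so ``undoing the stabilizer average'' cannot be done by general reasoning and needs a specific justification (e.g.\ some control of the stabilizer action on the vectors $\mathrm{Alt}f(\sigma)$). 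This is precisely the point where the paper's proof does its one line of work, namely the displayed identity $f'(\sigma)-\frac{1}{\vert\Gamma_\sigma\vert}\sum_{s\in\Gamma_\sigma}\pi_s\mathrm{Alt}f'(\sigma)=f'(\sigma)$ followed by the assertion that this forces $\mathrm{Alt}f'(\sigma)=0$; leaving it as a promissory note means the essential content of the corollary (the inclusion $L_-^{(k,p)}(X,E)\subseteq\lbrace f:\mathrm{Alt}f=0 \text{ on } \Sigma(k,\Gamma)\rbrace$, which is the direction the paper actually proves) is absent from your proposal.
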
 \begin{proof} Given $f \in L^{(k,p)}_-(X,E)$, $(I-P_L)f(\sigma) = f(\sigma)$ for all $\sigma \in \Sigma(k)$, and hence for all $\sigma \in \Sigma(k, \Gamma)$
\begin{align}
(I-P_L)f'(\sigma) &= f'(\sigma) - P_Lf'(\sigma) = f'(\sigma) - \dfrac{1}{\vert \Gamma_\sigma \vert} \sum_{s \in \Gamma_\sigma} \pi_s \mathrm{Alt}f'(\sigma) = f'(\sigma), \nonumber  
\end{align}
implying by linearity that $\mathrm{Alt}f'(\sigma) = 0$. Hence, $f$ is symmetric on representatives.
%\begin{align} 
%(I-P_L)f(\sigma) &= f(\sigma) - P_Lf(\sigma) = f'(\sigma) - \dfrac{1}{2} \dfrac{1}{\vert \Gamma_\sigma \vert}\sum_{s \in \Gamma_\sigma}\pi_s(f'(\sigma) -f'(-\sigma)) \nonumber \\ &= \dfrac{1}{2} \dfrac{1}{\vert \Gamma_\sigma \vert} \sum_{s \in \Gamma_\sigma}\pi_s(f'(\sigma) + f'(-\sigma)) \nonumber,
%\end{align}
%which is symmetric: $(I-P_L)f(-\sigma) = (I-P_L)f(\sigma)$. 
\end{proof}

\begin{corollary} \label{fin2}
$\displaystyle L^{(k,p)}(X,E)^* \cong \mathcal{E}^{(k,p^*)}(X,E^*) / \mathrm{Ann}(L^{(k,p)}(X,E)).$
\end{corollary}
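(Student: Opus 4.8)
The plan is to read the statement off from Proposition \ref{firstdual}(3) combined with the $\mathcal{E}$-space duality of Proposition \ref{cochaindual}. First I would record the direct sum decomposition: by Proposition \ref{P_Lcontinuous} the map $P_L$ is a continuous projection on $\mathcal{E}^{(k,p)}(X,E)$ whose image is $L^{(k,p)}(X,E)$, closed by Corollary \ref{Lclosed}, and whose kernel is $L_-^{(k,p)}(X,E)$, closed by Corollary \ref{fin1}. Hence $\mathcal{E}^{(k,p)}(X,E) = L^{(k,p)}(X,E) \oplus L_-^{(k,p)}(X,E)$ is a topological direct sum, and Proposition \ref{firstdual} applies with $M = L^{(k,p)}(X,E)$, $N = L_-^{(k,p)}(X,E)$ and $P = P_L$. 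Since $M$ is closed, part (3) of that proposition yields the isometric isomorphism $$ L^{(k,p)}(X,E)^* \cong \mathcal{E}^{(k,p)}(X,E)^* \big/ \mathrm{Ann}\big(L^{(k,p)}(X,E)\big), $$ where the annihilator is taken inside $\mathcal{E}^{(k,p)}(X,E)^*$.

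Next I would transport this along the identification of Proposition \ref{cochaindual}. Let $\Phi \colon \mathcal{E}^{(k,p^*)}(X,E^*) \to \mathcal{E}^{(k,p)}(X,E)^*$ be the isometric isomorphism implementing that proposition, i.e. $\Phi(\psi) = \langle \cdot, \psi \rangle_k$. A routine computation shows $\Phi(\psi) \in \mathrm{Ann}(L^{(k,p)}(X,E))$ if and only if $\langle \phi, \psi \rangle_k = 0$ for all $\phi \in L^{(k,p)}(X,E)$, so $\Phi$ maps the subspace $\{\psi : \langle \phi, \psi\rangle_k = 0 \ \forall \phi \in L^{(k,p)}(X,E)\}$ of $\mathcal{E}^{(k,p^*)}(X,E^*)$ isometrically onto $\mathrm{Ann}(L^{(k,p)}(X,E)) \subseteq \mathcal{E}^{(k,p)}(X,E)^*$. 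Since an isometric isomorphism carrying one closed subspace onto another descends to an isometric isomorphism of the quotients, combining this with the displayed formula gives $$ L^{(k,p)}(X,E)^* \cong \mathcal{E}^{(k,p^*)}(X,E^*) \big/ \mathrm{Ann}\big(L^{(k,p)}(X,E)\big), $$ with the annihilator now read inside $\mathcal{E}^{(k,p^*)}(X,E^*)$ via the pairing $\langle \cdot, \cdot\rangle_k$ — which is precisely the claim.

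Most of the work here is already done in the preceding results, so the remaining verifications are light: that $\mathcal{E}^{(k,p)}(X,E)$ behaves like a Banach space for these purposes (bounded functionals annihilate the kernel of the seminorm, so passing to the associated normed space is harmless and Proposition \ref{firstdual} applies as stated), and the compatibility of $\Phi$ with the annihilators. The one point I would be careful to spell out, since it is the only place where the notation could be ambiguous, is exactly that compatibility in the second paragraph: one must make sure that the object written $\mathrm{Ann}(L^{(k,p)}(X,E))$ in the conclusion really is the subspace of $\mathcal{E}^{(k,p^*)}(X,E^*)$ corresponding under $\Phi$ to the honest annihilator in $\mathcal{E}^{(k,p)}(X,E)^*$, so that the quotient in the statement is unambiguous; but this is immediate upon unwinding the definition of $\Phi$.
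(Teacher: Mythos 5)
Your argument is correct and follows essentially the same route as the paper: invoke the closedness of $L^{(k,p)}(X,E)$ (Corollary \ref{Lclosed}), apply Proposition \ref{firstdual}(3), and identify $\mathcal{E}^{(k,p)}(X,E)^*$ with $\mathcal{E}^{(k,p^*)}(X,E^*)$ via Proposition \ref{cochaindual}. You merely spell out the projection decomposition and the transport of the annihilator along the duality, details the paper leaves implicit.
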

\begin{proof} Since $L^{(k,p)}(X,E)$ is a closed subspace of $\mathcal{E}^{(k,p)}(X,E)$ by Corollary \ref{Lclosed}, the claim now follows from Proposition \ref{firstdual}(3) and the fact that ${\mathcal{E}^{(k,p)}(X,E)}^* \cong \mathcal{E}^{(k,p^*)}(X,E^*)$. \end{proof}
%Turning our attention to finding $\mathrm{Ann}(L^{(k,p)}(X,E))$.
%CORRECTED PROPOSITION \ref{P_Ladjoint} BELOW: similarly as previously there were some errors in the reasoning here and I tried to clean them up. I did $k=1$ separately to clear up the general case for any $k$: \\
It now remains to identify the annihilators, cf. Proposition \ref{isometric1}, to prove isomorphism and finally isometry. As indicated by Proposition \ref{dualcomposition1} this requires knowing $P_L^*$.
\begin{proposition} \label{P_Ladjoint}
Let $\overline{P}_L : \mathcal{E}^{(k,p^*)}(X,E^*) \rightarrow \mathcal{E}^{(k,p^*)}(X,E^*)$ be a projection as above. Then $\overline{P}_L = P_L^*$.
\end{proposition}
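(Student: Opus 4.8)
The plan is to verify directly that $\langle \overline{P}_L\phi, f\rangle_k = \langle \phi, P_L f\rangle_k$ for all cochains $f \in \mathcal{E}^{(k,p)}(X,E)$ and $\phi \in \mathcal{E}^{(k,p^*)}(X,E^*)$; together with Proposition \ref{cochaindual} and the definition of the adjoint of a bounded operator this is exactly the assertion $\overline{P}_L = P_L^*$. Since $P_L$ and its dual-side counterpart $\overline{P}_L$ are built as $\mathrm{Alt}$ composed with the stabiliser averaging $P$ (resp.\ $\overline{P}$, the corresponding averaging of Definition \ref{projection2} for the contragredient representation $\bar\pi$), and since $\mathrm{Alt}$ commutes with that averaging — this is the identity already exploited in the proof of Proposition \ref{P_Lcontinuous} — one gets, for $\sigma \in \Sigma(k,\Gamma)$, the explicit formula
$$P_L f(\sigma) = \frac{1}{(k+1)!\,\vert\Gamma_\sigma\vert} \sum_{s\in\Gamma_\sigma}\sum_{\alpha\in S_{k+1}} \mathrm{sign}(\alpha)\,\pi_s\,\alpha^* f(\sigma),$$
and the same formula with $\bar\pi$ in place of $\pi$ for $\overline{P}_L\phi(\sigma)$. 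Substituting these into the two pairings reduces the claim to an identity between two triple sums over $\Sigma(k,\Gamma) \times \Gamma_\sigma \times S_{k+1}$.

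To match the triple sums I would move the representation and the permutation off one argument and onto the other in two steps. First, the contragredient identity $\bar\pi_g = \pi^*_{g^{-1}}$ gives $\langle \phi(\sigma), \pi_s\,\alpha^* f(\sigma)\rangle_E = \langle \bar\pi_{s^{-1}}\phi(\sigma), \alpha^* f(\sigma)\rangle_E$, and since $s \mapsto s^{-1}$ is a bijection of the finite group $\Gamma_\sigma$ the inner sum over $s$ is unchanged; this disposes of $\pi_s$. Second, to transfer $\alpha^*$ from $f$ to $\phi$ one reindexes the sum over representatives: for fixed $\alpha$ the map $\sigma \mapsto \sigma^\alpha$ carries $\Sigma(k,\Gamma)$ bijectively onto another complete set of $\Gamma$-orbit representatives (the reordering action of $S_{k+1}$ commutes with the $\Gamma$-action), while $\omega(\sigma)$ and $\vert\Gamma_\sigma\vert$ depend only on the underlying unordered simplex and so are unaffected; combining this with the substitution $\alpha \mapsto \alpha^{-1}$ (which preserves $\mathrm{sign}$) moves $\alpha^*$ to the other factor and produces the desired equality.

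The step I expect to be the real obstacle is the second one, namely the bookkeeping of representative systems under the $S_{k+1}$-action. One must check that the reordered set $\{\sigma^\alpha : \sigma \in \Sigma(k,\Gamma)\}$ genuinely meets each $\Gamma$-orbit exactly once, keep track of which ordered simplex represents which orbit after the reordering, and confirm that the normalisation $\omega(\sigma)/((k+1)!\,\vert\Gamma_\sigma\vert)$ and the stabiliser sums are carried along without spurious factors; fixing $\Sigma(k,\Gamma)$ from the outset so that it is stable under reordering of vertices makes this bookkeeping transparent. By contrast, the contragredient step and the passage to the triple sums are routine once the explicit formula for $P_L$ on representatives is in hand.
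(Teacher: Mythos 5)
Your proposal is correct and follows essentially the same route as the paper: a direct verification of $\langle P_Lf,\phi\rangle_k=\langle f,\overline{P}_L\phi\rangle_k$ by expanding $P_L=\mathrm{Alt}\circ P$ on representatives, moving $\pi_s$ across the pairing via the contragredient (with the implicit substitution $s\mapsto s^{-1}$ in the finite stabiliser, which you make explicit), and transferring $\alpha^*$ by reindexing the sum over $\Sigma(k,\Gamma)$ using that $\omega$ and the stabilisers are unchanged under vertex reordering. The bookkeeping point you flag — choosing $\Sigma(k,\Gamma)$ stable under reordering of vertices — is exactly the convention the paper relies on (compare the justification of $(**)$ in its $k=1$ computation and the analogous step in Corollary \ref{althelp}), so no genuine gap remains.
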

\begin{proof} Assume first $k=1$, let $f \in \mathcal{E}^{(1,p)}(X,E)$ and $\phi \in \mathcal{E}^{(1,p^*)}(X,E^*)$. For $\sigma = (v_0, v_1) \in \Sigma(1, \Gamma)$ we denote by $-\sigma$ the simplex $(v_1,v_0)$. Now, 
\begin{align}
&\langle P_Lf, \phi \rangle_1 = \sum_{\sigma \in \Sigma(1, \Gamma)} \langle P_Lf(\sigma), \phi(\sigma) \rangle_E \dfrac{\omega(\sigma)}{2! \vert \Gamma_\sigma \vert} \nonumber \\ &\substack{(*) \\ =} \sum_{\sigma \in \Sigma(1, \Gamma)} \left\langle \dfrac{1}{\vert \Gamma_\sigma \vert} \sum_{s \in \Gamma_s} \pi_s \left( \dfrac{1}{2} f'(\sigma) - \dfrac{1}{2}f'(-\sigma)\right), \phi'(\sigma)\right\rangle_E \dfrac{\omega(\sigma)}{2! \vert \Gamma_\sigma \vert} \nonumber \\ &= \sum_{\sigma \in \Sigma(1, \Gamma)} \dfrac{1}{2 \vert \Gamma_\sigma \vert}\sum_{s \in \Gamma_\sigma} \left\langle \pi_s \left(f'(\sigma) - f'(-\sigma)\right), \phi'(\sigma)\right\rangle_E \dfrac{\omega(\sigma)}{2! \vert \Gamma_\sigma \vert} \nonumber \\ &= \sum_{\sigma \in \Sigma(1, \Gamma)} \dfrac{1}{2 \vert \Gamma_\sigma \vert} \sum_{s \in \Gamma_\sigma} \left\langle \pi_s f'(\sigma), \phi'(\sigma) \right\rangle_E \dfrac{\omega(\sigma)}{2! \vert \Gamma_\sigma \vert} \nonumber \\ &\phantom{=} -\sum_{\sigma \in \Sigma(1, \Gamma)} \dfrac{1}{2 \vert \Gamma_\sigma \vert}\sum_{s \in \Gamma_\sigma} \left\langle \pi_s f'(-\sigma), \phi'(\sigma)\right\rangle_E \dfrac{\omega(\sigma)}{2! \vert \Gamma_\sigma \vert} \nonumber \\ &\substack{(**) \\ =} \sum_{\sigma \in \Sigma(1, \Gamma)} \dfrac{1}{2 \vert \Gamma_\sigma \vert} \sum_{s \in \Gamma_\sigma} \left\langle \pi_s f'(\sigma), \phi'(\sigma) \right\rangle_E \dfrac{\omega(\sigma)}{2! \vert \Gamma_\sigma \vert} \nonumber \\ &\phantom{=} -\sum_{\sigma \in \Sigma(1, \Gamma)} \dfrac{1}{2 \vert \Gamma_\sigma \vert}\sum_{s \in \Gamma_\sigma} \left\langle \pi_s f'(\sigma), \phi'(-\sigma)\right\rangle_E 
\dfrac{\omega(\sigma)}{2! \vert \Gamma_\sigma \vert} \nonumber \end{align} \begin{align} &= \sum_{\sigma \in \Sigma(1, \Gamma)} \dfrac{1}{2 \vert \Gamma_\sigma \vert} \sum_{s \in \Gamma_\sigma} \left\langle f'(\sigma), \overline{\pi}_s \phi'(\sigma) \right\rangle_E \dfrac{\omega(\sigma)}{2! \vert \Gamma_\sigma \vert} \nonumber \\ &\phantom{=} -\sum_{\sigma \in \Sigma(1, \Gamma)} \dfrac{1}{2 \vert \Gamma_\sigma \vert} \sum_{s \in \Gamma_\sigma} \left\langle f'(\sigma), \overline{\pi}_s\phi'(-\sigma)\right\rangle_E \dfrac{\omega(\sigma)}{2! \vert \Gamma_\sigma \vert} \nonumber \\ &= \sum_{\sigma \in \Sigma(1, \Gamma)} \dfrac{1}{2 \vert \Gamma_\sigma \vert} \sum_{s \in \Gamma_\sigma} \left\langle f'(\sigma), \overline{\pi}_s \left(\phi'(\sigma)-\phi'(-\sigma) \right) \right\rangle_E \dfrac{\omega(\sigma)}{2! \vert \Gamma_\sigma \vert} \nonumber \\ &= \langle f, \overline{P}_L \phi \rangle_1,\nonumber 
\end{align} where $(*)$ and the last equality follow from the definition of $P_L$ and $\overline{P}_L$, respectively when $k=1$. $(**)$ follows as we sum over all $\sigma \in \Sigma(1, \Gamma)$, so the sums where we switch the summation variable $\sigma$ with $-\sigma$ agree as $\omega(\sigma) = \omega(-\sigma)$. For $k>1$ the calculation goes similarly, denoting $\sigma = (v_0, \dots, v_k) \in \Sigma(k, \Gamma)$ and arguing similarly,
\begin{align}
&\langle P_Lf, \phi \rangle_k = \sum_{\sigma \in \Sigma(k, \Gamma)} \langle P_Lf(\sigma), \phi(\sigma) \rangle_E \dfrac{\omega(\sigma)}{(k+1)! \vert \Gamma_\sigma \vert} \nonumber \\ &\substack{{} \\ =}\sum_{\sigma \in \Sigma(k, \Gamma)} \sum_{s \in \Gamma_s} \sum_{\alpha \in S_{k+1}} \dfrac{1}{\vert \Gamma_\sigma \vert} \dfrac{1}{(k+1)!}  (-1)^{\mathrm{sgn}(\alpha)} \left\langle \pi_s  f'((v_{\alpha(0)}, \dots, v_{\alpha(k)})), \phi'(\sigma)\right\rangle_E \dfrac{\omega(\sigma)}{(k+1)! \vert \Gamma_\sigma \vert} \nonumber \\ &\substack{{} \\ =}\sum_{\sigma \in \Sigma(k, \Gamma)} \sum_{s \in \Gamma_s} \sum_{\alpha \in S_{k+1}} \dfrac{1}{\vert \Gamma_\sigma \vert} \dfrac{1}{(k+1)!}  (-1)^{\mathrm{sgn}(\alpha)} \left\langle \pi_s  f'(\sigma), \phi'((v_{\alpha(0)}, \dots, v_{\alpha(k)}))\right\rangle_E \dfrac{\omega(\sigma)}{(k+1)! \vert \Gamma_\sigma \vert} \nonumber \\ &=\sum_{\sigma \in \Sigma(k, \Gamma)} \sum_{s \in \Gamma_s} \sum_{\alpha \in S_{k+1}} \dfrac{1}{\vert \Gamma_\sigma \vert} \dfrac{1}{(k+1)!}  (-1)^{\mathrm{sgn}(\alpha)} \left\langle  f'(\sigma), \overline{\pi}_s \phi'((v_{\alpha(0)}, \dots, v_{\alpha(k)}))\right\rangle_E \dfrac{\omega(\sigma)}{(k+1)! \vert \Gamma_\sigma \vert} \nonumber \\ &= \langle f, \overline{P}_L \phi \rangle_k, \nonumber
\end{align} 
where the first and last equality follows by the definition of $P_L$ and linearity of the dual pairing, and the third similarly as in the case $k=1$ above.\end{proof}

\begin{proposition} \label{isometric1}
The following are equal as sets:
\begin{enumerate}
\item $\mathrm{Ann}(L_-^{(k,p)}(X,E)) = L^{(k,p^*)}(X,E^*);$ 
\item $\mathrm{Ann}(L^{(k,p)}(X,E)) = L_-^{(k,p^*)}(X,E^*).$
\end{enumerate}
\end{proposition}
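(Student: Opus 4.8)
The plan is to read both identities off the abstract duality statement Proposition~\ref{dualcomposition1}(1), applied to the topological direct sum decomposition $\mathcal{E}^{(k,p)}(X,E) = L^{(k,p)}(X,E) \oplus L_-^{(k,p)}(X,E)$ (valid since $L^{(k,p)}(X,E)$ is closed by Corollary~\ref{Lclosed} and $P_L$ is a bounded projection by Proposition~\ref{P_Lcontinuous}). That proposition, with $M = L^{(k,p)}(X,E)$, $N = L_-^{(k,p)}(X,E)$ and associated projection $P_L$, yields $\ker P_L^* = \mathrm{Ann}(L^{(k,p)}(X,E))$ and $\mathop{\mathrm{im}} P_L^* = \mathrm{Ann}(L_-^{(k,p)}(X,E))$ as subsets of $\mathcal{E}^{(k,p)}(X,E)^*$. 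So the whole statement reduces to identifying $P_L^*$ concretely, which is precisely the content of Proposition~\ref{P_Ladjoint}.

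Concretely, I would first fix the isometric isomorphism $\mathcal{E}^{(k,p)}(X,E)^* \cong \mathcal{E}^{(k,p^*)}(X,E^*)$ of Proposition~\ref{cochaindual}, recalling that it is the one induced by the pairing $\langle \cdot, \cdot \rangle_k$; consequently the annihilator of a subspace $M \subseteq \mathcal{E}^{(k,p)}(X,E)$, computed in the abstract dual, corresponds under this isomorphism to $\{ \phi \in \mathcal{E}^{(k,p^*)}(X,E^*) : \langle \phi, f\rangle_k = 0 \ \forall f \in M\}$. Transporting $P_L^*$ along this isomorphism, Proposition~\ref{P_Ladjoint} identifies it with $\overline{P}_L$, the alternating-average projection on $\mathcal{E}^{(k,p^*)}(X,E^*)$ built exactly as $P_L$ but for the contragredient representation $\bar\pi$ on $E^*$.

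It then remains to recall what $\overline{P}_L$ is. By construction it is a projection onto $L^{(k,p^*)}(X,E^*)$, so $\mathop{\mathrm{im}}\overline{P}_L = L^{(k,p^*)}(X,E^*)$; and by Corollary~\ref{fin1} applied with $p^*$ and $E^*$ in place of $p$ and $E$, its kernel is precisely $L_-^{(k,p^*)}(X,E^*) = \{\phi \in \mathcal{E}^{(k,p^*)}(X,E^*) : \mathrm{Alt}\,\phi(\sigma) = 0 \ \forall \sigma \in \Sigma(k,\Gamma)\}$. Feeding these two facts into the previous paragraph gives $\mathrm{Ann}(L_-^{(k,p)}(X,E)) = \mathop{\mathrm{im}}\overline{P}_L = L^{(k,p^*)}(X,E^*)$, which is (1), and $\mathrm{Ann}(L^{(k,p)}(X,E)) = \ker\overline{P}_L = L_-^{(k,p^*)}(X,E^*)$, which is (2).

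The step I expect to require the most care is the bookkeeping around the identification of duals: one must verify that the isomorphism of Proposition~\ref{cochaindual} genuinely intertwines the abstract adjoint $P_L^*$ with the concretely defined $\overline{P}_L$ — this is exactly where Proposition~\ref{P_Ladjoint} is invoked — and that annihilators are carried to annihilators under it. I would also emphasize that the proposition asserts equality only \emph{as sets}: the dual norm that $\mathrm{Ann}(L^{(k,p)}_-(X,E))$ inherits from $\mathcal{E}^{(k,p)}(X,E)^*$ is a priori different from the $\Vert\cdot\Vert_{(k,p^*)}$-norm on $L^{(k,p^*)}(X,E^*)$, and reconciling the two via the quotient-norm computation is what the subsequent isometry statements (Propositions~\ref{fin2} ff.) are for.
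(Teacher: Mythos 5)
Your proposal is correct and follows essentially the same route as the paper: both arguments rest on the closed decomposition $\mathcal{E}^{(k,p)}(X,E)=L^{(k,p)}(X,E)\oplus L^{(k,p)}_-(X,E)$ given by $P_L$ and on the identification $P_L^*=\overline{P}_L$ of Proposition~\ref{P_Ladjoint}, from which the annihilators are read off as the image and kernel of $\overline{P}_L$. The only difference is one of packaging: you quote the abstract kernel/image--annihilator duality of Proposition~\ref{dualcomposition1}(1), whereas the paper verifies the two inclusions directly by the pairing computation $\langle P_Lf,\phi\rangle_k=\langle f,\overline{P}_L\phi\rangle_k$, which amounts to reproving that fact in this special case; your closing remark that the equality is only as sets, with the isometry deferred to Proposition~\ref{fin4}, matches the paper's intent.
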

\begin{proof} Suppose $f \in L_-^{(k,p)}(X,E)$ and $\phi \in \mathcal{E}^{(k,p^*)}(X,E^*)$. Then,
\begin{align}
\langle f, \phi \rangle_k &= \sum_{\sigma \in \Sigma(k, \Gamma)} \langle f(\sigma), \phi(\sigma) \rangle_E \dfrac{\omega(\sigma)}{(k+1)!\vert \Gamma_\sigma \vert} \nonumber \\ &= \sum_{\sigma \in \Sigma(k, \Gamma)} \langle (I-P_L)f(\sigma), \phi(\sigma) \rangle_E \dfrac{\omega(\sigma)}{(k+1)!\vert \Gamma_\sigma \vert} \nonumber \\ &= \langle f,\phi \rangle_k - \langle P_L f,\phi \rangle_k = \langle f,\phi \rangle_k - \langle f,\overline{P}_L \phi \rangle_k \nonumber.
\end{align} Hence, $\langle f, \overline{P}_L \phi \rangle_E = 0$ for all $f \in L_-^{(k,p)}(X,E)$ where $\overline{P}_L \phi \in L^{(k,p^*)}(X,E^*)$. Thus, $L^{(k,p^*)}(X,E^*) \subseteq \mathrm{Ann}(L_-^{(k,p)}(X,E))$. On the other hand, suppose $\phi \in \mathrm{Ann}(L_-^{(k,p)}(X,E))$, then $\langle f, \phi \rangle_k = 0$ for all $f \in L_-^{(k,p)}(X,E)$. Hence, for all $f \in \mathcal{E}^{(k,p^*)}(X,E^*)$, so $0= \langle (I-P_L)f, \phi \rangle_k$ if and only if $\langle f, \phi \rangle_k = \langle P_Lf, \phi \rangle_k = \langle f, \overline{P}_L \phi \rangle_k$ for all $f \in \mathcal{E}^{(k,p^*)}(X,E^*)$. Thus, $\phi = \overline{P}_L \phi$ implies that $\phi \in L^{(k,p^*)}(X,E^*)$, so $\mathrm{Ann}(L_-^{(k,p)}(X,E)) \subseteq L^{(k,p^*)}(X,E^*)$ proving the first claim. The proof of the second claim goes similarly. \end{proof}

\begin{corollary} \label{isometry1}
The following are isomorphic:
\begin{enumerate}
\item $ L^{(k,p)}(X,E)^* \cong \mathcal{E}^{(k,p^*)}(X,E^*) \, / L^{(k,p^*)}_-(X,E^*) \simeq L^{(k,p^*)}(X,E^*);$
\item $ L^{(k,p)}_-(X,E)^* \cong \mathcal{E}^{(k,p^*)}(X,E^*) \, / L^{(k,p^*)}(X,E^*) \simeq L^{(k,p^*)}_-(X,E^*).$
\end{enumerate}
\end{corollary}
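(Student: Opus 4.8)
The plan is to assemble the isomorphisms from the pieces already established, applying Proposition \ref{firstdual} to the two complementary decompositions of $\mathcal{E}^{(k,p)}(X,E)$ and $\mathcal{E}^{(k,p^*)}(X,E^*)$ and then invoking Proposition \ref{isometric1} to rewrite the annihilators in the concrete form we want. Concretely, for part (1): by Corollary \ref{fin2} we already have the isometric isomorphism $L^{(k,p)}(X,E)^* \cong \mathcal{E}^{(k,p^*)}(X,E^*)/\mathrm{Ann}(L^{(k,p)}(X,E))$, and by Proposition \ref{isometric1}(2) the quotient denominator $\mathrm{Ann}(L^{(k,p)}(X,E))$ equals $L^{(k,p^*)}_-(X,E^*)$ as a set; substituting gives the first isometric isomorphism in (1). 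For the second relation in (1), note that $\mathcal{E}^{(k,p^*)}(X,E^*) = L^{(k,p^*)}(X,E^*)\oplus L^{(k,p^*)}_-(X,E^*)$ is exactly the decomposition induced by $\overline{P}_L$ (which by Proposition \ref{P_Ladjoint} is $P_L^*$), so the quotient of the whole space by $L^{(k,p^*)}_-(X,E^*)$ is linearly isomorphic to the complementary summand $L^{(k,p^*)}(X,E^*)$ via the canonical map $\phi + L^{(k,p^*)}_-(X,E^*) \mapsto \overline{P}_L\phi$; this is the $\simeq$. Part (2) is proved by the symmetric argument, starting from Corollary \ref{isometry1}(2)'s analogue: apply Proposition \ref{firstdual}(3) to the closed subspace $L^{(k,p)}_-(X,E)$ (closed by Corollary \ref{symmetry}) together with $\mathcal{E}^{(k,p)}(X,E)^* \cong \mathcal{E}^{(k,p^*)}(X,E^*)$ to get $L^{(k,p)}_-(X,E)^* \cong \mathcal{E}^{(k,p^*)}(X,E^*)/\mathrm{Ann}(L^{(k,p)}_-(X,E))$, then use Proposition \ref{isometric1}(1) to replace $\mathrm{Ann}(L^{(k,p)}_-(X,E))$ by $L^{(k,p^*)}(X,E^*)$, and finally collapse the quotient to the complementary summand $L^{(k,p^*)}_-(X,E^*)$ as before.

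The one point requiring a little care is the passage from the isometric isomorphism $M^* \cong E^*/\mathrm{Ann}(M)$ of Proposition \ref{firstdual}(3) to the further identification $E^*/\mathrm{Ann}(M) \simeq \mathrm{Ann}(N)$: in general this second map is only a (bounded linear) isomorphism, not an isometry, which is why the statement uses $\simeq$ there and $\cong$ only for the first link in each chain. It is worth making explicit in the write-up that the composite map realizing $L^{(k,p)}(X,E)^* \simeq L^{(k,p^*)}(X,E^*)$ sends a functional to its (co)restriction along the embedding, equivalently that on $L^{(k,p^*)}(X,E^*)\subseteq \mathcal{E}^{(k,p^*)}(X,E^*)$ the dual pairing $\langle\cdot,\cdot\rangle_k$ restricted to $L^{(k,p)}(X,E)\times L^{(k,p^*)}(X,E^*)$ is a nondegenerate pairing inducing the duality — and nondegeneracy here follows from Proposition \ref{isometric1}, since a $\phi \in L^{(k,p^*)}(X,E^*)$ pairing trivially with all of $L^{(k,p)}(X,E)$ would lie in $\mathrm{Ann}(L^{(k,p)}(X,E)) \cap L^{(k,p^*)}(X,E^*) = L^{(k,p^*)}_-(X,E^*)\cap L^{(k,p^*)}(X,E^*) = 0$. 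The main obstacle, such as it is, is thus purely bookkeeping: keeping straight which of the two complementary spaces plays the role of $M$ and which the role of $N$ in each of the four invocations of Proposition \ref{firstdual}, and tracking whether each individual arrow is an isometry ($\cong$) or merely an isomorphism ($\simeq$) so that the displayed chains are stated with the correct symbols.
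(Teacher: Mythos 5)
Your proposal is correct and follows essentially the same route as the paper: the paper's proof likewise obtains the isometric isomorphisms by combining Proposition \ref{firstdual} (via Corollary \ref{fin2}) with the annihilator identifications of Proposition \ref{isometric1}, and gets the remaining linear isomorphisms from the direct-sum decomposition of $\mathcal{E}^{(k,p^*)}(X,E^*)$ given by Proposition \ref{dualcomposition1}. Your additional care in distinguishing which arrows are isometries ($\cong$) and which are merely bounded isomorphisms ($\simeq$) is consistent with the statement and with the paper's subsequent upgrade to isometry in Proposition \ref{fin4}.
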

\begin{proof} The first isomorphic isomorphisms follow immediately combining Propositions \ref{firstdual} and \ref{isometric1}, and the latter isomorphisms by Proposition \ref{dualcomposition1}.\end{proof}

\begin{proposition} \label{fin4}
The following are isometrically isomorphic:
\begin{enumerate}
\item $L^{(k,p)}(X,E)^* \cong L^{(k,p^*)}(X,E^*);$
\item $L^{(k,p)}_-(X,E)^* \cong L^{(k,p^*)}_-(X,E^*).$
\end{enumerate}
\end{proposition}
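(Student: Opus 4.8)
The plan is to upgrade the isomorphisms of Corollary~\ref{isometry1} to isometric isomorphisms by computing the quotient norm on $\mathcal{E}^{(k,p^*)}(X,E^*)\,/\,L^{(k,p^*)}_-(X,E^*)$ explicitly and matching it with the norm of $L^{(k,p^*)}(X,E^*)$. From Proposition~\ref{isometric1} we already know that the decomposition $\mathcal{E}^{(k,p^*)}(X,E^*) = L^{(k,p^*)}(X,E^*) \oplus L^{(k,p^*)}_-(X,E^*)$ is exactly the splitting induced by the projection $\overline{P}_L = P_L^*$, with $\ker\overline{P}_L = L^{(k,p^*)}_-(X,E^*)$ and $\operatorname{im}\overline{P}_L = L^{(k,p^*)}(X,E^*)$. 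So the canonical map realizing the isomorphism of Corollary~\ref{isometry1}(1) sends a coset $\phi + L^{(k,p^*)}_-(X,E^*)$ to $\overline{P}_L\phi$, and I must show that $\Vert \overline{P}_L\phi \Vert_{(k,p^*)} = \inf_{\psi \in L^{(k,p^*)}_-(X,E^*)} \Vert \phi + \psi \Vert_{(k,p^*)}$ for every $\phi$.

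First I would observe that the formula for $\Vert\cdot\Vert_{(k,p)}$ is, pointwise on a set of representatives, an $\ell^{p}$-direct sum (with weights $\omega(\sigma)/((k+1)!|\Gamma_\sigma|)$) of copies of $\Vert\cdot\Vert_E$; hence it suffices to prove the analogous statement for the elementary building block, namely: on $E$ itself with a fixed projection $Q$ whose range and kernel are described as above, one has $\Vert Q x\Vert_E = \operatorname{dist}(x, \ker Q)$. But $\operatorname{dist}(x,\ker Q) = \Vert x + \ker Q\Vert_{E/\ker Q}$, and since $\ker Q$ is closed, the quotient map $E \to E/\ker Q$ composed with the canonical iso $E/\ker Q \xrightarrow{\sim} \operatorname{im} Q$ is precisely $Q$; the content is that \emph{this} particular $Q$ is norm-nonincreasing in \emph{both} directions along the splitting. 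Concretely, $\Vert Q x\Vert_E \le \Vert x\Vert_E$ would give one inequality after passing to the infimum, and $\Vert Q x\Vert_E \le \Vert x - \psi\Vert_E$ for all $\psi\in\ker Q$ follows because $Q(x-\psi) = Qx$ together with $\Vert Q\Vert \le 1$. Thus everything reduces to: the complementary projections $P_L$ on $\mathcal{E}^{(k,p)}(X,E)$ and $\overline{P}_L$ on $\mathcal{E}^{(k,p^*)}(X,E^*)$ are contractions (norm $\le 1$), \emph{not} merely bounded.

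Here is the main obstacle, and it is genuine: Proposition~\ref{P_Lcontinuous} only gives $\Vert P_L f\Vert^p_{(k,p)} \le (k+1)!\,\Vert f\Vert^p_{(k,p)}$, a bound with a constant strictly larger than $1$, which is not enough. I expect the fix to be the standard one: one does not take the quotient norm on all of $\mathcal{E}^{(k,p^*)}(X,E^*)$ but rather works with $\mathcal{E}^{(k,p^*)}(X,E^*)/\operatorname{Ann}(L^{(k,p)}(X,E))$ as in Corollary~\ref{fin2}, where Proposition~\ref{firstdual}(3) is a genuinely isometric statement (the isometry $M^* \cong E^*/\operatorname{Ann}(M)$ holds for any closed subspace $M$, with no projection-norm hypothesis, because the pairing descends and Hahn--Banach supplies norm-preserving extensions). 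So the real plan is: (1) invoke Corollary~\ref{fin2} to get the isometric isomorphism $L^{(k,p)}(X,E)^* \cong \mathcal{E}^{(k,p^*)}(X,E^*)/\operatorname{Ann}(L^{(k,p)}(X,E))$; (2) invoke Proposition~\ref{isometric1}(2) to identify $\operatorname{Ann}(L^{(k,p)}(X,E))$ with $L^{(k,p^*)}_-(X,E^*)$ \emph{as sets} — and here I should check, or note, that this identification is also isometric, which it is, since both are just subspaces of $\mathcal{E}^{(k,p^*)}(X,E^*)$ carrying the restricted norm; (3) conclude $L^{(k,p)}(X,E)^* \cong \mathcal{E}^{(k,p^*)}(X,E^*)/L^{(k,p^*)}_-(X,E^*)$ isometrically; and finally (4) show this quotient is isometric to $L^{(k,p^*)}(X,E^*)$. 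Step~(4) is where I cannot avoid the projection: the natural candidate map is $\overline{P}_L$, and to see it is an isometry onto, I use that for $\phi \in L^{(k,p^*)}(X,E^*)$ the coset $\phi + L^{(k,p^*)}_-(X,E^*)$ has $\Vert\phi\Vert_{(k,p^*)} \ge \Vert\phi+\psi\Vert_{(k,p^*)}$ failing in general — so instead I argue that the composition $L^{(k,p^*)}(X,E^*) \hookrightarrow \mathcal{E}^{(k,p^*)}(X,E^*) \twoheadrightarrow \mathcal{E}^{(k,p^*)}(X,E^*)/L^{(k,p^*)}_-(X,E^*)$ is a bijective bounded linear map whose inverse is $\overline{P}_L$ (well-defined on the quotient since $\overline{P}_L$ kills $L^{(k,p^*)}_-$), reducing isometry to: the pointwise building-block map $E^* \supseteq \operatorname{im}Q^* \hookrightarrow E^* \to E^*/\ker Q^*$ is isometric, which is exactly Proposition~\ref{firstdual}(3) applied to the \emph{subspace} $\operatorname{im}Q^* = \operatorname{Ann}(\ker Q)$ — genuinely isometric, projection-norm-free. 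I would carry out (1)--(4) in that order; the one calculation I would actually write down is the verification in (2) that set-equality of Proposition~\ref{isometric1} is norm-equality, and the one conceptual point to flag is that one must route through $\operatorname{Ann}$ and Proposition~\ref{firstdual}(3) rather than through the non-contractive projections $P_L, \overline{P}_L$ directly. Part~(2) of the proposition is identical after swapping the roles of $L$ and $L_-$ throughout and using Proposition~\ref{isometric1}(1) and Corollary~\ref{isometry1}(2) in place of their counterparts.
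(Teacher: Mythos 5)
Your steps (1)--(3) only restate Corollary \ref{fin2}, Proposition \ref{isometric1} and Corollary \ref{isometry1}; the entire new content of Proposition \ref{fin4} sits in your step (4), and there the argument has a genuine gap. You claim that the composition $L^{(k,p^*)}(X,E^*)=\mathrm{im}\,\overline{P}_L \hookrightarrow \mathcal{E}^{(k,p^*)}(X,E^*) \twoheadrightarrow \mathcal{E}^{(k,p^*)}(X,E^*)/L^{(k,p^*)}_-(X,E^*)$ is isometric ``by Proposition \ref{firstdual}(3), projection-norm-free.'' Proposition \ref{firstdual}(3) identifies $M^*$ with $E^*/\mathrm{Ann}(M)$; it says nothing about a chosen complement of $\mathrm{Ann}(M)$ mapping isometrically onto that quotient, and for a general complemented decomposition this is false. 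Take $E=\mathbb{R}^2$ with the Euclidean norm, $M=\mathrm{span}(e_1)$, $N=\mathrm{span}(e_1+e_2)$: the functional $\phi=(1,-1)\in\mathrm{Ann}(N)$ has $\Vert\phi\Vert_{E^*}=\sqrt{2}$, while its class in $E^*/\mathrm{Ann}(M)\cong M^*$ has norm $\mathrm{dist}(\phi,\mathrm{Ann}(M))=1$. The map from the complement into $E^*/\mathrm{Ann}(M)$ is isometric exactly when $\mathrm{dist}(\phi,\mathrm{Ann}(M))=\Vert\phi\Vert$ for every $\phi$ in the complement, which is precisely the quantitative statement you are trying to avoid; it cannot be extracted from Hahn--Banach or annihilator duality alone, but depends on how $L^{(k,p^*)}(X,E^*)$ and $L^{(k,p^*)}_-(X,E^*)$ actually sit relative to each other in the norm $\Vert\cdot\Vert_{(k,p^*)}$.

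What the paper supplies at exactly this point is the missing estimate, and it is a concrete computation with the cochains. For $\phi$ in one summand and $\psi$ in the other, the substitution $\sigma\mapsto-\sigma$ (transposing the first two vertices), together with $\psi$ alternating, $\phi$ symmetric on representatives, $\omega(-\sigma)=\omega(\sigma)$ and $\Gamma_{-\sigma}=\Gamma_\sigma$, gives the identity $\Vert\phi+\psi\Vert_{(k,p^*)}=\Vert\phi-\psi\Vert_{(k,p^*)}$; the triangle inequality $2\Vert\phi\Vert_{(k,p^*)}\le\Vert\phi+\psi\Vert_{(k,p^*)}+\Vert\phi-\psi\Vert_{(k,p^*)}$ then yields $\Vert\phi\Vert_{(k,p^*)}\le\Vert\phi+\psi\Vert_{(k,p^*)}$ for every $\psi$, so the quotient norm of $[\phi]$ equals $\Vert\phi\Vert_{(k,p^*)}$ and the bijection from Corollary \ref{isometry1} is isometric. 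Your diagnosis of the obstacle (the $(k+1)!$ bound on $P_L$, $\overline{P}_L$ makes the naive contraction argument unavailable) and your routing through the annihilators are both correct, but without an estimate of this kind the proof is incomplete: some special symmetry of this particular splitting must be invoked, and the paper's $\Vert\phi+\psi\Vert=\Vert\phi-\psi\Vert$ identity is exactly that input.
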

Proof. Consider the second claim. Consider $\mathcal{E}^{(k,p^*)}(X,E^*) \, / L^{(k,p^*)}(X,E^*)$ consisting of the cosets $[\phi] = \phi + L^{(k,p^*)}(X,E^*)$ for $\phi \in \mathcal{E}^{(k,p^*)}(X,E^*)$. We claim that if $\phi \in L^{(k,p^*)}_-(X,E^*)$, then $\Vert [\phi]\Vert = \Vert \phi \Vert_{(k,p^*)}$ where $\Vert \cdot \Vert = \inf \lbrace \Vert \xi \Vert_ {(k,p^*)} \colon \xi \in [\phi] \rbrace$ is the quotient norm. On the other hand, $\mathcal{E}^{(k,p^*)}(X,E^*) \, / L^{(k,p^*)}(X,E^*) \simeq L^{(k,p^*)}_-(X,E^*)$ by Corollary \ref{isometry1}, so $\mathcal{E}^{(k,p^*)}(X,E^*) \, / L^{(k,p^*)}(X,E^*) \cong L^{(k,p^*)}_-(X,E^*)$ and consequently $L^{(k,p)}_-(X,E)^* \cong L^{(k,p^*)}_-(X,E^*)$. Towards this end, fix $\phi \in L^{(k,p^*)}_-(X,E^*)$. Thus, for $\psi \in L^{(k,p^*)}(X,E^*)$ we have
\begin{align}
\Vert \phi + \psi \Vert_{(k,p^*)}^{p^*} &= \sum_{\sigma \in \Sigma(k, \Gamma)} \Vert \phi(\sigma) + \psi(\sigma) \Vert^{p^*}_{E^*} \dfrac{\omega(\sigma)}{(k+1)!\vert \Gamma_\sigma \vert} \nonumber \\ &\substack{{} \\ =} \sum_{\sigma \in \Sigma(k, \Gamma)} \Vert \phi(\sigma) - \psi(-\sigma) \Vert^{p^*}_{E^*} \dfrac{\omega(\sigma)}{(k+1)!\vert \Gamma_\sigma \vert} \nonumber \\ &\substack{{} \\ =} \sum_{\sigma \in \Sigma(k, \Gamma)} \Vert \phi(-\sigma) - \psi(-\sigma) \Vert^{p^*}_{E^*} \dfrac{\omega(\sigma)}{(k+1)!\vert \Gamma_\sigma \vert} \nonumber \\ &\substack{{} \\ =} \sum_{\sigma \in \Sigma(k, \Gamma)} \Vert \phi(-\sigma) - \psi(-\sigma) \Vert^{p^*}_{E^*} \dfrac{\omega(-\sigma)}{(k+1)!\vert \Gamma_{-\sigma} \vert} \nonumber \\ &\substack{{} \\ =} \Vert \phi - \psi \Vert^{p^*}_{(k,p^*)}, \nonumber
\end{align} 
where $-\sigma = (v_1,v_0, v_2, \dots, v_k)$ for $\sigma = (v_0, \dots, v_k)$. The second equality follows since $\psi$ is alternating, the third equality since $\phi$ is symmetric on representatives, the fourth since $\omega$ is symmetric and $\Gamma_\sigma = \Gamma_{- \sigma}$, and the last equality holds since we sum over all $\sigma \in \Sigma(k, \Gamma)$, which contains all the oriented simplexes with the vertices of $\sigma$. Thus, $\Vert \phi + \psi \Vert_{(k,p^*)}^{p^*} = \Vert \phi - \psi \Vert_{(k,p^*)}^{p^*}$ and consequently by the triangle inequality $$2\Vert \phi \Vert_{(k,p^*)} = \Vert 2 \phi + \psi - \psi \Vert_{(k,p^*)} \leq \Vert \phi + \psi \Vert_{(k,p^*)} + \Vert \phi - \psi \Vert_{(k,p^*)},$$ and so $$\Vert \phi \Vert_{(k,p^*)} \leq \dfrac{1}{2}(\Vert \phi + \psi \Vert_{(k,p^*)} + \Vert \phi - \psi \Vert_{(k,p^*)}) = \Vert \phi + \psi \Vert_{(k,p)}.$$ Now, taking the infimum over $\psi \in L^{(k,p^*)}(X,E^*)$ thus shows that the quotient norm of $[\phi]$ is $\Vert \phi\Vert_{(k,p^*)}$, proving the second claim. The first claim is proven similarly by considering the cosets in $\mathcal{E}^{(k,p^*)}(X,E^*) \, / L^{(k,p^*)}_-(X,E^*)$. \begin{flushright}$\Box$ \end{flushright}

%\begin{corollary}
%$\mathrm{Ann}(L^{(k,p)}(X,E)) = \ker \overline{P}_L$.
%\end{corollary}
%Proof. By proposition \ref{P_Ladjoint}, $\overline{P}_L = P_L^*$. On the other hand, by proposition \ref{dualcomposition1}, $\mathrm{Ann}(\mathop{\mathrm{ran}} P_L) = \ker P_L^*$, so $\mathrm{Ann}(L^{(k,p)}(X,E)) = \ker \overline{P}_L$.\begin{flushright}$\Box$ \end{flushright}

\section{Differentials and codifferentials} \label{DC}
Having identified the dual of $L^{(k,p)}(X,E)$ up to isometric isomorphism we extend the notion of differentials and codifferentials as presented in \cite{BS} to reflexive Banach spaces.   
\begin{definition}
Codifferentials and differentials. Define the codifferential $$d_k: \mathcal{E}^{(k,p)}(X,E)  \rightarrow  \mathcal{E}^{(k+1,p)}(X,E)$$ point-wise by $$d\phi(\sigma) = \sum_{i=0}^{k+1}(-1)^i\phi(\sigma_i),$$ 
%where $\sigma = (v_0, \dots, v_{k+1}) \in \Sigma(k+1)$ and $\sigma_i =(v_0, \dots, \hat{v_i}, \dots, v_{k+1})$. 
and the differential $$\delta_{k+1}: L^{(k+1,p^*)}(X,E^*) \rightarrow L^{(k,p^*)}(X,E^*),$$ as the adjoint of $d$ given by $\langle \phi , d \psi \rangle_{k+1} = \langle \delta \phi, \psi \rangle_k$ for all $\psi \in L^{(k,p)}(X,E)$ and $\phi \in L^{(k+1,p^*)}(X,E^*)$.
\end{definition}
Similarly, we denote by $\bar{d}_k: \mathcal{E}^{(k,p^*)}(X,E^*) \rightarrow \mathcal{E}^{(k+1,p^*)}(X,E^*)$ the map given by $\bar{d}\psi(\sigma) =  \sum_{i=0}^{k}(-1)^i\psi(\sigma_i)$ for $\psi \in \mathcal{E}^{(k,p^*)}(X,E^*)$, and likewise for the differential.
%\begin{definition}
%Define the codifferentials 
%\begin{align} &d_k: \mathcal{E}^{(k,p)}(X,E)  \rightarrow  \mathcal{E}^{(k+1,p)}(X,E), \nonumber \\ &\bar{d}_k: \mathcal{E}^{(k,p)}(X,E^*) \rightarrow \mathcal{E}^{(k+1,p^*)}(X,E^*), \nonumber 
%\end{align} point-wise as \begin{align} 
%&d\phi(\sigma) = \sum_{i=0}^{k}(-1)^i\phi(\sigma_i), \nonumber \\
%&\bar{d}\psi(\sigma) =  \sum_{i=0}^{k}(-1)^i\psi(\sigma_i)\nonumber
%\end{align} for $\sigma = (v_0, \dots, v_{k+1}) \in \Sigma(k+1)$, where $\sigma_i =(v_0, \dots, \hat{v_i}, \dots, v_{k+1})$.
%\end{definition}
%In particular, we note that $d_{k} \circ d_{k-1} = 0$, so the image of $d_{k-1}$ is contained in $\ker \, d_k$.

%\begin{corollary}
%$$\xymatrix@1{{\cdots} \ar[r]^-{d_{k-2}} & \mathcal{E}^{(k-1,p)}(X,E) \ar[r]^-{d_{k-1}} & \mathcal{E}^{(k,p)}(X,E) \ar[r]^-{d_k} & {\cdots} 
%}$$ is a cochain complex over $\mathbb{R}$. \qed
%\end{corollary} 
\begin{corollary}
$$\xymatrix@1{{\cdots} \ar[r]^-{\delta_{k+2}} & L^{(k+1,p^*)}(X,E^*) \ar[r]^-{\delta_{k+1}} & L^{(k,p^*)}(X,E^*) \ar[r]^-{\delta_k} & {\cdots} 
}$$
%$$\dots \substack{ \delta_{k+2} \\ \longrightarrow} L^{(k+1,p)}(X,E) \, \substack{ \delta_{k+1} \\ \longrightarrow} \, L^{(k,p)}(X,E) \substack{ \delta_{k} \\ \longrightarrow} \,L^{(k-1,p)}(X,E) \, \substack{ \delta_{k-1} \\ \longrightarrow} \, \dots$$ 
is a chain complex over $\mathbb{R}$ dual to the cochain complex 
$$\xymatrix@1{{\cdots\,} & L^{(k+1,p)}(X,E) \ar[l]_-{d_{k+1}} & L^{(k,p)}(X,E) \ar[l]_-{d_k} & {\cdots} \ar[l]_-{d_{k-1}} 
}$$
%$$\dots \substack{ d_{k+1} \\ \longleftarrow} L^{(k+1,p)}(X,E) \, \substack{ d_{k} \\ \longleftarrow} \, L^{(k,p)}(X,E) \substack{ d_{k-1} \\ \longleftarrow} \,L^{(k-1,p)}(X,E) \, \substack{ d_{k-2} \\ \longleftarrow} \, \dots$$
\qed \end{corollary}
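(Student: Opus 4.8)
The plan is to verify the two asserted facts separately: first that the $\delta_k$ compose to zero so the top row is a chain complex over $\mathbb{R}$, and second that this complex is dual to the displayed cochain complex of the $d_k$'s. For the first point I would not work with $\delta$ directly, since it is only defined implicitly as an adjoint. Instead I would push the question down to the $d_k$'s, using that $L^{(k,p^*)}(X,E^*)$ is reflexive (it is isometrically isomorphic to $L^{(k,p)}(X,E)^*$ by Proposition \ref{fin4}, and $L^{(k,p)}(X,E)$ is a closed subspace of the reflexive space $\mathcal{E}^{(k,p)}(X,E)$ by Corollary \ref{Lclosed}). The key computation is $d_{k+1}\circ d_k = 0$ on $L^{(k,p)}(X,E)$: this is the classical simplicial identity $\sum_{i<j}(-1)^{i+j}(\text{terms})=0$ coming from $(\sigma_i)_{j-1}=(\sigma_j)_i$ for $i<j$, and it is unaffected by the twisting and by passing to representatives mod $\Gamma$ since $d$ is defined point-wise on $\Sigma(k)$. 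Once $d_{k+1}d_k=0$ is in hand, for $\phi\in L^{(k+2,p^*)}$ and any $\psi\in L^{(k,p)}$ one has
\begin{align}
\langle \delta_{k+1}\delta_{k+2}\phi,\psi\rangle_k &= \langle \delta_{k+2}\phi, d_k\psi\rangle_{k+1} = \langle \phi, d_{k+1}d_k\psi\rangle_{k+2} = 0, \nonumber
\end{align}
and since this holds for all $\psi$ and $L^{(k,p)}(X,E)^*\cong L^{(k,p^*)}(X,E^*)$, the dual pairing is non-degenerate, forcing $\delta_{k+1}\delta_{k+2}\phi=0$. Hence the $\delta$-sequence is a chain complex.

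For the duality statement, I would simply record that by construction $\delta_{k+1}=d_k^*$ under the identifications $L^{(k,p)}(X,E)^*\cong L^{(k,p^*)}(X,E^*)$ of Proposition \ref{fin4}: indeed $\delta$ was \emph{defined} by the adjunction $\langle\phi,d\psi\rangle_{k+1}=\langle\delta\phi,\psi\rangle_k$, which is exactly the statement that $\delta_{k+1}$ represents the Banach-space adjoint of $d_k\colon L^{(k,p)}(X,E)\to L^{(k+1,p)}(X,E)$ through the pairing between each space and its dual. Taking adjoints of a (bounded) cochain complex always yields a chain complex, so this also re-proves $\delta_{k+1}\delta_{k+2}=0$; but I would still want to check that each $d_k$ is bounded $L^{(k,p)}\to L^{(k+1,p)}$ (so that the adjoint exists as a bounded map), which follows from the pointwise formula together with Proposition \ref{combinatorial} bounding the number of cofaces and the triangle inequality, exactly as in the corresponding estimate in \cite{BS}.

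The main obstacle, such as it is, is purely bookkeeping: one must be careful that $\delta_{k+1}$ genuinely lands in $L^{(k,p^*)}(X,E^*)$ (alternating, twisted, finite norm) rather than merely in $\mathcal{E}^{(k,p^*)}(X,E^*)$. This is where Proposition \ref{fin4} and Proposition \ref{isometric1} do the real work: the dual of the closed subspace $L^{(k,p)}(X,E)$ is identified with $L^{(k,p^*)}(X,E^*)$ and not just with a quotient of $\mathcal{E}^{(k,p^*)}(X,E^*)$, so the adjoint of $d_k$ automatically takes values in the space claimed. I would also note in passing that the pointwise identity $d_{k+1}d_k=0$ combined with the adjunction gives the dual relation on the nose, so no separate verification that ``$\delta$ is dual to $d$'' is needed beyond unwinding definitions. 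Everything else---linearity, $\Gamma$-equivariance, the alternation bookkeeping---is routine and parallels the untwisted Hilbert-space case in \cite{BS}.
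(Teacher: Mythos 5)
Your argument is correct and is essentially the intended one: the paper offers no written proof (the corollary is stated as immediate from the definitions), and your verification — the pointwise simplicial identity $d_{k+1}d_k=0$, the defining adjunction $\langle\phi,d\psi\rangle_{k+1}=\langle\delta\phi,\psi\rangle_k$, and the identification $L^{(k,p)}(X,E)^*\cong L^{(k,p^*)}(X,E^*)$ from Proposition \ref{fin4} to see that the pairing separates points and that $\delta$ lands in the claimed space — is exactly the unwinding the paper has in mind. Your remark that boundedness of $d_k$ is needed for the adjoint is also consistent with the paper, which proves that estimate immediately after the corollary.
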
 
As the following shows, both $d$ and $\delta$ are bounded operators.
\begin{proposition}
Let $\phi \in L^{(k,p)}(X,E)$. Then  $d: L^{(k,p)}(X,E) \rightarrow L^{(k+1,p)}(X,E)$ is a bounded operator and $$\Vert d\phi \Vert^p_{(k+1,p)} \leq (n-k)(k+2)^p\Vert \phi \Vert^p_{(k,p)}.$$ 
\end{proposition}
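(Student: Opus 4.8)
The plan is to estimate $\Vert d\phi(\sigma)\Vert_E^p$ pointwise and then sum over representatives, using Proposition~\ref{combinatorial} to control the resulting combinatorial factor. First I would fix $\sigma = (v_0,\dots,v_{k+1}) \in \Sigma(k+1,\Gamma)$ and apply the triangle inequality together with the standard convexity estimate $\Vert \sum_{i=0}^{k+1} a_i \Vert_E^p \le (k+2)^{p-1} \sum_{i=0}^{k+1} \Vert a_i \Vert_E^p$ to $d\phi(\sigma) = \sum_{i=0}^{k+1}(-1)^i \phi(\sigma_i)$, obtaining $\Vert d\phi(\sigma)\Vert_E^p \le (k+2)^{p-1} \sum_{i=0}^{k+1} \Vert \phi(\sigma_i)\Vert_E^p$. (One could also use the cruder bound $(k+2)^p$ in place of $(k+2)^{p-1}$; since the statement only claims $(k+2)^p$, either suffices, but $p-1$ is the sharp convexity constant.)

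Next I would plug this into the definition of the $(k+1,p)$-norm:
\begin{align}
\Vert d\phi \Vert_{(k+1,p)}^p &= \sum_{\sigma \in \Sigma(k+1,\Gamma)} \Vert d\phi(\sigma)\Vert_E^p \dfrac{\omega(\sigma)}{(k+2)!\,\vert \Gamma_\sigma\vert} \nonumber \\
&\le (k+2)^{p-1} \sum_{\sigma \in \Sigma(k+1,\Gamma)} \sum_{i=0}^{k+1} \Vert \phi(\sigma_i)\Vert_E^p \dfrac{\omega(\sigma)}{(k+2)!\,\vert \Gamma_\sigma\vert}. \nonumber
\end{align}
The inner double sum over pairs $(\sigma_i, \sigma)$ with $\sigma_i \subset \sigma$ (a codimension-one face and its coface) is exactly of the form handled by Proposition~\ref{switchingsums} with $l = k$: the function $f(\tau,\sigma) = \Vert \phi(\tau)\Vert_E^p \,\omega(\sigma)$ is $\Gamma$-invariant because $\phi$ is twisted and $\omega$ is $\Gamma$-invariant, so I can switch the order of summation to get $\sum_{\tau \in \Sigma(k,\Gamma)} \tfrac{1}{\vert\Gamma_\tau\vert} \Vert\phi(\tau)\Vert_E^p \sum_{\sigma \supset \tau} \omega(\sigma)$. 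Here I must also account for the fact that over all ordered cofaces each unordered extension is counted with the appropriate multiplicity, but this is precisely what Proposition~\ref{switchingsums} is set up to track, and Proposition~\ref{combinatorial} then evaluates $\sum_{\sigma \in \Sigma(k+1), \tau \subset \sigma} \omega(\sigma) = (n-k)(k+2)!\,\omega(\tau)$.

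Substituting this and simplifying the factorials ($\tfrac{(k+2)!}{(k+2)!} = 1$, leaving a $(k+1)!$ in the denominator of the $\tau$-norm) yields
\begin{align}
\Vert d\phi\Vert_{(k+1,p)}^p &\le (k+2)^{p-1}(n-k) \sum_{\tau \in \Sigma(k,\Gamma)} \Vert\phi(\tau)\Vert_E^p \dfrac{(k+2)!\,\omega(\tau)}{(k+2)!\,\vert\Gamma_\tau\vert} \nonumber \\
&= (n-k)(k+2)^{p-1} (k+1)! \cdot \dfrac{1}{(k+1)!}\sum_{\tau\in\Sigma(k,\Gamma)}\Vert\phi(\tau)\Vert_E^p \dfrac{\omega(\tau)}{\vert\Gamma_\tau\vert} \nonumber \\
&= (n-k)(k+2)^{p-1}\Vert\phi\Vert_{(k,p)}^p \le (n-k)(k+2)^p \Vert\phi\Vert_{(k,p)}^p. \nonumber
\end{align}
Boundedness of $\delta$ then follows immediately by duality: since $\delta$ is defined as the adjoint of $d$ via the pairing of Proposition~\ref{cochaindual}/Proposition~\ref{fin4}, $\Vert\delta\Vert = \Vert d\Vert$, giving the same constant with $p$ replaced by $p^*$. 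The main obstacle is purely bookkeeping: getting the combinatorial normalization exactly right when switching summation order, in particular making sure the factor from Proposition~\ref{switchingsums} (which involves $\Sigma(k)$, not $\Sigma(k,\Gamma)$, on the coface side) meshes correctly with the factorial normalizations $(k+1)!$ versus $(k+2)!$ in the two norms and with the alternating structure (each unordered face appears in several ordered guises with matching weights and stabilizers). I would double-check this against the $k=0,1$ cases before trusting the general computation. There is also a minor point that one should verify $d\phi$ is again alternating and twisted so that it genuinely lands in $L^{(k+1,p)}(X,E)$; twisting is clear from linearity of $\pi_g$, and alternation is the standard simplicial fact.
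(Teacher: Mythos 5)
Your proposal is correct and follows essentially the same route as the paper: a pointwise convexity estimate on $d\phi(\sigma)$ (the paper uses the cruder constant $(k+2)^p$ via a maximum, you the sharper $(k+2)^{p-1}$), followed by switching the order of summation with Proposition \ref{switchingsums} and evaluating $\sum_{\sigma \supset \tau}\omega(\sigma)$ with Proposition \ref{combinatorial}. The one point you should make explicit rather than attribute to Proposition \ref{switchingsums} is the conversion $\sum_{i=0}^{k+1}\Vert\phi(\sigma_i)\Vert_E^p = \frac{1}{(k+1)!}\sum_{\tau\in\Sigma(k),\,\tau\subset\sigma}\Vert\phi(\tau)\Vert_E^p$, which uses that $\phi$ is alternating (so all orderings of a face have equal norm) and supplies precisely the $(k+1)!$ needed for the normalization of $\Vert\phi\Vert_{(k,p)}$ --- your displayed intermediate line drops this factor even though your final constant comes out right.
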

\begin{proof} Using the estimate \begin{align} \left\Vert \sum_{i=0}^{k+1} (-1)^i \phi(s_i) \right\Vert_E^p &\leq \left( \Vert \phi(s_0) \Vert_E + \left\Vert \sum_{i=1}^{k+1} (-1)^i \phi(s_i) \right\Vert_E \right)^p \leq \left( \sum_{i=0}^{k+1} \Vert \phi(s_i)\Vert_E \right)^p \nonumber \\ &\leq ((k+2) \max \lbrace \Vert \phi(s_0) \Vert, \dots, \Vert \phi(s_{k+1})\Vert \rbrace )^p \nonumber \\ &\leq (k+2)^p \sum_{i=0}^{k+1} \Vert \phi(s_i) \Vert_E^p , \nonumber \end{align} it follows that \begin{align}
\Vert d \phi \Vert^p_{(k+1,p)} &= \sum_{s \in \Sigma(k+1, \Gamma)} \Vert d \phi (s) \Vert_E^p \dfrac{\omega(s)}{(k+2)! \vert \Gamma_s \vert} \nonumber \\ &= \sum_{s \in \Sigma(k+1, \Gamma)} \left\Vert \sum_{i=0}^{k+1}(-1)^i \phi(s_i) \right\Vert_E^p \dfrac{\omega(s)}{(k+2)! \vert \Gamma_s \vert} \nonumber \\ &\substack{{} \\ \leq} \sum_{s \in \Sigma(k+1, \Gamma)} \left( \dfrac{(k+2)^p\omega(s)}{(k+2)! \vert \Gamma_s \vert} \sum_{i=0}^{k+1}\Vert \phi(s_i)\Vert_E^p \right) \nonumber \end{align} \begin{align}  &\substack{{} \\ =} \sum_{s \in \Sigma(k+1, \Gamma)} \left( \dfrac{(k+2)^p\omega(s)}{(k+2)!(k+1)! \vert \Gamma_s \vert} \sum_{\substack{t \in \Sigma(k) \\ t \subset s}} \Vert [s:t] \phi(t)\Vert_E^p \right) \nonumber \\ &\substack{{} \\ =} \sum_{s \in \Sigma(k+1, \Gamma)} \left( \dfrac{(k+2)^p\omega(s)}{(k+2)!(k+1)! \vert \Gamma_s \vert} \sum_{\substack{t \in \Sigma(k) \\ t \subset s}} \Vert \phi(t)\Vert_E^p \right) \nonumber \\ %&\substack{(3) \\ =} \sum_{\substack{s \in \Sigma(k+1, \Gamma) \\ t \in \Sigma(k, \Gamma)}} \sum_{\substack{g \in \Gamma \\ g \cdot t \subset s}} \dfrac{(k+2)^p\omega(s)}{(k+2)!(k+1)! \vert \Gamma_s \vert \vert \Gamma_t \vert} \Vert \phi(g \cdot t)\Vert_E^p \nonumber \\
%Here, I figure I could look for the possibility of $\Gamma$-invariance of $\Vert \phi(t)\Vert^p$, or some inequality making it independent of $g$; if I suppose that twisting holds for those $g \in \Gamma$ such that $g \cdot t \subset s$, then I get $\Vert \phi(g \cdot t) \Vert_E^p = \Vert \pi_g \phi(t) \Vert^p \leq \Vert \pi_g\Vert_E^p \Vert \phi(t)\Vert_E^p$, and what I would need is that $\Vert \pi_g\Vert_E^p \leq C$ for all such $g$. Another alternative would be to choose the $D^{(k,p)}(X,E)$ somehow as in your paper? If I assume I can get past this, say by finding such a $C$, I can continue as follows:
%&\substack{(4) \\ =} \sum_{\substack{s \in \Sigma(k+1, \Gamma) \\ t \in \Sigma(k, \Gamma)}} \dfrac{(k+2)^p \omega(s)}{(k+2)!(k+1)! \vert \Gamma_s \vert \vert \Gamma_t \vert} \sum_{\substack{g \in \Gamma \\ g \cdot t \subset s}} \Vert \phi(t) \Vert^p_E \nonumber \\ &\substack{(5) \\ =} \sum_{t \in \Sigma(k, \Gamma)} \dfrac{(k+2)^p \Vert \phi(t)\Vert^p_E}{(k+2)!(k+1)!\vert \Gamma_t \vert} \sum_{s \in \Sigma(k+1, \Gamma)} \sum_{\substack{g \in \Gamma \\ t \subset g^{-1} \cdot s}} \dfrac{ \omega(s)}{\vert \Gamma_s \vert} \nonumber \\ &\substack{(6) \\ =} \sum_{t \in \Sigma(k, \Gamma)} \dfrac{(k+2)^p \Vert \phi(t)\Vert^p_E}{(k+2)!(k+1)!\vert \Gamma_t \vert} \sum_{s \in \Sigma(k+1, \Gamma)} \sum_{\substack{g \in \Gamma \\ t \subset g \cdot s}} \dfrac{ \omega(s)}{\vert \Gamma_s \vert} \nonumber \\ 
&\substack{(*) \\ =}\sum_{t \in \Sigma(k, \Gamma)} \left( \dfrac{(k+2)^p \Vert \phi(t)\Vert^p_E}{(k+2)!(k+1)!\vert \Gamma_t \vert} \sum_{\substack{s \in \Sigma(k+1) \\ t \subset s}} \omega(s) \right) \nonumber \\ 
%&= \sum_{t \in \Sigma(k, \Gamma)} \dfrac{C(k+2)^p}{(k+2)!(k+1)!\vert \Gamma_t \vert} \sum_{s \in \Sigma(k+1, \Gamma)} \sum_{\substack{g \in \Gamma \\ t \subset g^ \cdot s}} \dfrac{\Vert \phi(t)\Vert^p_E \omega(s)}{\vert \Gamma_s \vert} \nonumber \\ 
%&= \sum_{\substack{s \in \Sigma(k+1, \Gamma) \\ t \in \Sigma(k, \Gamma)}} \sum_{\substack{g \in \Gamma \\ t \subset g \cdot s}} \dfrac{(k+2)^p\omega(s)}{(k+2)!(k+1)! \vert \Gamma_s \vert \vert \Gamma_t \vert} \Vert \phi(g \cdot t)\Vert_E^p \nonumber \\ &\leq \sum_{t \in \Sigma(k, \Gamma)} \sum_{\substack{s \in \Sigma(k+1) \\ t \subset s}} \dfrac{C(k+2)^p\omega(s)}{(k+2)!(k+1)! \vert \Gamma_t \vert} \Vert \phi(t)\Vert_E^p \nonumber \\ 
&\substack{(**) \\ =} \sum_{t \in \Sigma(k, \Gamma)} \dfrac{(n-k)(k+2)!(k+2)^p\omega(t)}{(k+2)!(k+1)! \vert \Gamma_t \vert} \Vert \phi(t)\Vert_E^p \nonumber \\ &= (n-k)(k+2)^p\Vert \phi \Vert^p_{(k,p)}, \nonumber
\end{align}
where we used Proposition \ref{switchingsums} in $(*)$ followed by Proposition \ref{combinatorial} in $(**)$.
%\begin{enumerate}
%\item as \begin{align} \left\Vert \sum_{i=0}^{k+1} (-1)^i \phi(s_i) \right\Vert_E^p &\leq \left( \Vert \phi(s_0) \Vert_E + \left\Vert \sum_{i=1}^{k+1} (-1)^i \phi(s_i) \right\Vert_E \right)^p \leq \left( \sum_{i=0}^{k+1} \Vert \phi(s_i)\Vert_E \right)^p \nonumber \\ &\leq ((k+2) \max \lbrace \Vert \phi(s_0) \Vert, \dots, \Vert \phi(s_{k+1})\Vert \rbrace )^p \nonumber \\ &\leq (k+2)^p \sum_{i=0}^{k+1} \Vert \phi(s_i) \Vert_E^p ; \nonumber \end{align}
%\item $[s: t]$ accounting for the coefficients, and since $\phi$ is antisymmetric summing over all $t \in \Sigma(k)$ accounts for each $\phi(s_i)$ term $(k+1)!$ times, hence the coefficient $1/(k+1)!$;
%\item since $\vert [s:t]\vert=1$;
%\item by Proposition \ref{switchingsums} since the norm is $\Gamma$-invariant for twisted cochains;
%\item by Proposition \ref{combinatorial}.
%\end{enumerate} 
\end{proof}

%\begin{definition}
%define the differential $$\delta_{k+1}: L^{(k+1,p^*)}(X,E^*) \rightarrow L^{(k,p^*)}(X,E^*),$$ for short $\delta$, as the adjoint of $d$ given by $\langle \phi , d \psi \rangle_{k+1} = \langle \delta \phi, \psi \rangle_k$ for all $\psi \in L^{(k,p)}(X,E)$ and $\phi \in L^{(k+1,p^*)}(X,E^*)$.
%\end{definition}

%\begin{corollary}
%$$\xymatrix@1{{\cdots} \ar[r]^-{\delta_{k+2}} & L^{(k+1,p^*)}(X,E^*) \ar[r]^-{\delta_{k+1}} & L^{(k,p^*)}(X,E^*) \ar[r]^-{\delta_k} & {\cdots} 
%}$$
%$$\dots \substack{ \delta_{k+2} \\ \longrightarrow} L^{(k+1,p)}(X,E) \, \substack{ \delta_{k+1} \\ \longrightarrow} \, L^{(k,p)}(X,E) \substack{ \delta_{k} \\ \longrightarrow} \,L^{(k-1,p)}(X,E) \, \substack{ \delta_{k-1} \\ \longrightarrow} \, \dots$$ 
%is a chain complex over $\mathbb{R}$ dual to the cochain complex 
%$$\xymatrix@1{{\cdots\,} & L^{(k+1,p)}(X,E) \ar[l]_-{d_{k+1}} & L^{(k,p)}(X,E) \ar[l]_-{d_k} & {\cdots} \ar[l]_-{d_{k-1}} 
%}$$
%$$\dots \substack{ d_{k+1} \\ \longleftarrow} L^{(k+1,p)}(X,E) \, \substack{ d_{k} \\ \longleftarrow} \, L^{(k,p)}(X,E) \substack{ d_{k-1} \\ \longleftarrow} \,L^{(k-1,p)}(X,E) \, \substack{ d_{k-2} \\ \longleftarrow} \, \dots$$
%\end{corollary} \begin{flushright}$\Box$ \end{flushright}
Similarly to \cite{BS}, we also have the following useful point-wise expression for the differential.
\begin{proposition} \label{codifferential}
For $\phi \in L^{(k,p^*)}(X, E^*)$ and $\tau \in \Sigma(k-1)$ $$\delta \phi(\tau) = \sum_{\substack{v \in \Sigma(0) \\ v * \tau \in \Sigma(k)}} \dfrac{\omega(v * \tau)}{\omega(\tau)} \phi(v * \tau).$$ 
\end{proposition}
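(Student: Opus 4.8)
The statement identifies $\delta\phi$, which is defined abstractly as the adjoint of $d$, by an explicit formula, so the natural route is to test the abstract definition against an arbitrary cochain $\psi\in L^{(k-1,p)}(X,E)$. Write $\psi_0$ for the candidate, i.e. put $\psi_0(\tau)=\sum_{v\in\Sigma(0),\,v*\tau\in\Sigma(k)}\tfrac{\omega(v*\tau)}{\omega(\tau)}\phi(v*\tau)$ for $\tau\in\Sigma(k-1)$; the goal is to prove $\delta\phi=\psi_0$. First I would check that $\psi_0$ genuinely belongs to $L^{(k-1,p^*)}(X,E^*)$: it is alternating because $\phi$ is alternating and $\omega$ depends only on the underlying unordered simplex; it is twisted by $\bar\pi$ because $\phi$ is, $\omega$ is $\Gamma$-invariant, and each $g\in\Gamma$ carries $\{v:v*\tau\in\Sigma(k)\}$ bijectively onto $\{v:v*(g\tau)\in\Sigma(k)\}$; and $\Vert\psi_0\Vert_{(k-1,p^*)}\le C\Vert\phi\Vert_{(k,p^*)}<\infty$ by the same kind of estimate (triangle inequality followed by Propositions \ref{switchingsums} and \ref{combinatorial}) that bounds $d$. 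Hence $\psi_0\in L^{(k-1,p^*)}(X,E^*)\cong L^{(k-1,p)}(X,E)^*$.

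Next I would compute $\langle\phi,d\psi\rangle_k$ and show it equals $\langle\psi_0,\psi\rangle_{k-1}$. Unwinding the definitions, $\langle\phi,d\psi\rangle_k=\sum_{\sigma\in\Sigma(k,\Gamma)}\langle\phi(\sigma),\sum_{i=0}^{k}(-1)^i\psi(\sigma_i)\rangle_E\tfrac{\omega(\sigma)}{(k+1)!\,\vert\Gamma_\sigma\vert}$. Because $\psi$ is alternating, $\sum_{i=0}^k(-1)^i\psi(\sigma_i)=\tfrac1{k!}\sum_{\tau\in\Sigma(k-1),\,\tau\subset\sigma}[\sigma:\tau]\psi(\tau)$, which turns the pairing into a double sum over incident pairs $\tau\subset\sigma$. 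The summand $[\sigma:\tau]\langle\phi(\sigma),\psi(\tau)\rangle_E\,\omega(\sigma)$ is $\Gamma$-invariant, since orientation-preservation makes the incidence signs invariant, $\omega$ is $\Gamma$-invariant, and $\langle\bar\pi_g\phi(\sigma),\pi_g\psi(\tau)\rangle_E=\langle\phi(\sigma),\psi(\tau)\rangle_E$; so Proposition \ref{switchingsums} applies and rewrites the expression as $\sum_{\tau\in\Sigma(k-1,\Gamma)}\tfrac1{\vert\Gamma_\tau\vert}\sum_{\sigma\in\Sigma(k),\,\sigma\supset\tau}\tfrac{[\sigma:\tau]\langle\phi(\sigma),\psi(\tau)\rangle_E\,\omega(\sigma)}{(k+1)!\,k!}$.

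Then I would evaluate the inner sum by grouping the $\sigma\supset\tau$ according to the unique vertex $v$ of $\sigma$ not in $\tau$: for each admissible $v$ there are exactly $(k+1)!$ orderings $\sigma$ of $\{v\}\cup\tau$, all with $\omega(\sigma)=\omega(v*\tau)$, and for every such $\sigma$ one has $[\sigma:\tau]\phi(\sigma)=\phi(v*\tau)$ — moving $v$ to the front of $\sigma$ and then reordering the remaining vertices into the order of $\tau$ produces precisely the sign $[\sigma:\tau]$, which $\phi$ absorbs by alternation. Hence $\sum_{\sigma\supset\tau}[\sigma:\tau]\langle\phi(\sigma),\psi(\tau)\rangle_E\,\omega(\sigma)=(k+1)!\sum_v\omega(v*\tau)\langle\phi(v*\tau),\psi(\tau)\rangle_E$, the leftover $(k+1)!$ cancels, and we are left with $\langle\phi,d\psi\rangle_k=\sum_{\tau\in\Sigma(k-1,\Gamma)}\langle\psi_0(\tau),\psi(\tau)\rangle_E\tfrac{\omega(\tau)}{k!\,\vert\Gamma_\tau\vert}=\langle\psi_0,\psi\rangle_{k-1}$. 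Since $\langle\delta\phi,\psi\rangle_{k-1}=\langle\phi,d\psi\rangle_k=\langle\psi_0,\psi\rangle_{k-1}$ for all $\psi\in L^{(k-1,p)}(X,E)$ and the dual pairing with $L^{(k-1,p^*)}(X,E^*)$ is non-degenerate, $\delta\phi$ and $\psi_0$ agree on $\Sigma(k-1,\Gamma)$; being both twisted and alternating they then agree on all of $\Sigma(k-1)$, which is the asserted pointwise formula.

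I expect the main obstacle to be the sign identity $[\sigma:\tau]\phi(\sigma)=\phi(v*\tau)$: one must reconcile the combinatorial definition of the incidence coefficient $[\sigma:\tau]$ (introduced in the text only for faces of the special shape $\sigma_i$, and here extended to an arbitrary ordered $\tau\subset\sigma$ via the sign of a reordering) with the permutation sign produced by alternation of $\phi$, and check the cancellation uniformly over all $(k+1)!$ orderings of $\{v\}\cup\tau$ and over the position of $v$ inside $\sigma$. The only other point not to gloss over is the verification that $\psi_0$ actually lies in $L^{(k-1,p^*)}(X,E^*)$, without which the non-degeneracy argument identifying $\psi_0$ with $\delta\phi$ would not be available.
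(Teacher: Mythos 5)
Your proposal is correct and follows essentially the same route as the paper: pair the candidate formula against an arbitrary $\psi\in L^{(k-1,p)}(X,E)$, rewrite $d\psi$ as a signed sum over all ordered faces using that $\psi$ is alternating, switch the order of summation via Proposition \ref{switchingsums}, and regroup the inner sum by the extra vertex $v$ so the factorials cancel and the weight $\omega(\tau)/(k!\,\vert\Gamma_\tau\vert)$ of the $(k-1)$-pairing emerges. Your additional checks (that the candidate lies in $L^{(k-1,p^*)}(X,E^*)$ and the sign bookkeeping $[\sigma:\tau]\phi(\sigma)=\phi(v*\tau)$) are left implicit in the paper but are consistent with its computation.
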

\begin{proof} Let $\psi \in L^{(k-1,p)}(X,E)$. The claim follows by a straightforward computation,
\begin{align}
\langle \phi, d \psi\rangle_{k} &=  \sum_{\sigma \in \Sigma(k, \Gamma)} \dfrac{\omega(\sigma)}{(k+1)! \vert \Gamma_\sigma\vert}\langle \phi(\sigma), d \psi(\sigma)\rangle_E \nonumber \\ &=  \sum_{\sigma \in \Sigma(k, \Gamma)} \dfrac{\omega(\sigma)}{(k+1)! \vert \Gamma_\sigma\vert} \langle \phi(\sigma), \sum_{i=0}^{k-1}(-1)^i \psi(\sigma_i)\rangle_E \nonumber \\ &= \sum_{\sigma \in \Sigma(k, \Gamma)} \dfrac{\omega(\sigma)}{(k+1)! \vert \Gamma_\sigma\vert} \langle \phi(\sigma), \dfrac{1}{(k-1+1)!} \sum_{\substack{\tau \in \Sigma(k-1) \\ \tau \subset \sigma}} [\sigma: \tau] \psi(\tau)\rangle_E \nonumber \\ &= \sum_{\sigma \in \Sigma(k, \Gamma)} \sum_{\substack{\tau \in \Sigma(k-1) \\ \tau \subset \sigma}} [\sigma :\tau]\dfrac{\omega(\sigma)\langle \phi(\sigma), \psi(\tau)\rangle_E}{(k+1)! \vert \Gamma_\sigma\vert k!} \nonumber 
% \\ &= \sum_{\substack{\sigma \in \Sigma(k,\Gamma) \\ \tau \in \Sigma(k-1, \Gamma)}} \sum_{\substack{g \in \Gamma \\ g \cdot \tau \subset \sigma}} [\sigma :g \cdot \tau]\dfrac{\omega(\sigma)\langle \phi(\sigma), \psi(g \cdot \tau)\rangle_E}{(k+1)! k! \vert \Gamma_\sigma \vert \vert \Gamma_\tau \vert}
\nonumber 
\end{align}
%Now, as 
%\begin{align}
%\langle \phi(\sigma), \psi(g \cdot \tau) \rangle_E &= \langle \phi(\sigma), \pi_g \psi (\tau) \rangle_E = \langle \phi(gg^{-1} \cdot \sigma), \pi_g \psi(\tau) \rangle_E \nonumber \\ &= \langle \pi^*_{g^{-1}}\phi(g^{-1} \cdot \sigma), \pi_g \psi(\tau)\rangle_E = \langle \phi(g^{-1} \cdot \sigma), \pi_{g^{-1}} \pi_g \psi(\tau)\rangle_E \nonumber \\ &= \langle \phi(g^{-1} \cdot \sigma), \phi(\tau) \rangle_E \nonumber,
%\end{align} it follows that
\begin{align}
%&=\sum_{\substack{\sigma \in \Sigma(k,\Gamma) \\ \tau \in \Sigma(k-1, \Gamma)}} \sum_{\substack{g \in \Gamma \\ g \cdot \tau \subset \sigma}} [\sigma : g \cdot \tau] \dfrac{\omega(\sigma)\langle \phi(g^{-1} \cdot \sigma), \psi(\tau)\rangle_E}{(k+1)! k! \vert \Gamma_\sigma \vert \vert \Gamma_\tau \vert} \nonumber \\ &= \sum_{\substack{\sigma \in \Sigma(k,\Gamma) \\ \tau \in \Sigma(k-1, \Gamma)}} \sum_{\substack{g \in \Gamma \\ \tau \subset g^{-1} \cdot \sigma}} [g^{-1} \cdot \sigma : \tau] \dfrac{\omega(\sigma)\langle \phi(g^{-1} \cdot \sigma), \psi(\tau)\rangle_E}{(k+1)! k! \vert \Gamma_\sigma \vert \vert \Gamma_\tau \vert} \nonumber \\ &= \sum_{\substack{\sigma \in \Sigma(k,\Gamma) \\ \tau \in \Sigma(k-1, \Gamma)}} \sum_{\substack{g \in \Gamma \\ \tau \subset g \cdot \sigma}} [g \cdot \sigma : \tau] \dfrac{\omega(\sigma)\langle \phi(g \cdot \sigma), \psi(\tau)\rangle_E}{(k+1)! k! \vert \Gamma_\sigma \vert \vert \Gamma_\tau \vert} \nonumber \\ 
\phantom{\langle \phi, d \psi\rangle_{k} = \,} &\substack{(*) \\ =} \sum_{\tau \in \Sigma(k-1, \Gamma)} \sum_{\substack{\sigma \in \Sigma(k) \\ \tau \subset \sigma}} [\sigma :\tau] \dfrac{\omega(\sigma)\langle \phi(\sigma), \psi(\tau)\rangle_E}{(k+1)! \vert \Gamma_\tau\vert k!} \nonumber \\ &= \sum_{\tau \in \Sigma(k-1, \Gamma)} \dfrac{\omega(\tau)}{\omega(\tau)} \sum_{\substack{\sigma \in \Sigma(k) \\ \tau \subset \sigma}} [\sigma :\tau] \dfrac{\omega(\sigma)\langle \phi(\sigma), \psi(\tau)\rangle_E}{(k+1)! \vert \Gamma_\tau\vert k!} \nonumber \\ &= \sum_{\tau \in \Sigma(k-1, \Gamma)} \dfrac{\omega(\tau)}{k! \vert \Gamma_\tau \vert} \sum_{\substack{\sigma \in \Sigma(k) \\ \tau \subset \sigma}} [\sigma :\tau] \dfrac{\omega(\sigma)\langle \phi(\sigma), \psi(\tau)\rangle_E}{(k+1)! \omega(\tau)} \nonumber \\  &= \sum_{\tau \in \Sigma(k-1, \Gamma)} \dfrac{\omega(\tau)}{k! \vert \Gamma_\tau \vert} \sum_{\substack{v \in \Sigma(0) \\ v * \tau \in \Sigma(k)}} [v\tau : \tau] \dfrac{\omega(v * \tau)}{\omega(\tau)}\langle \phi(v * \tau), \psi(\tau)\rangle_E \nonumber \\ &= \sum_{\tau \in \Sigma(k-1, \Gamma)} \dfrac{\omega(\tau)}{k! \vert \Gamma_\tau \vert} \left\langle \sum_{\substack{v \in \Sigma(0) \\ v * \tau \in \Sigma(k)}} \dfrac{\omega(v * \tau)}{\omega(\tau)} \phi(v * \tau), \psi(\tau) \right\rangle_E = \langle \delta \phi, \psi\rangle_{k-1}, \nonumber
\end{align} where we used Proposition \ref{switchingsums} in $(*)$ above. \end{proof}

\section{Localization and restriction} \label{secloc}
In this section we recall the concept of localization following \cite{BS} and develop the notion in the setting of reflexive Banach spaces. We also consider the concept of restriction, recently considered by I. Oppenheim in the context of $L^2$-cohomology \cite{IO}. Proposition \ref{averagenorms} and \ref{propQ} are the key results. The former relates the norm of the average to the norm of the differential, whereas the latter gives a global vanishing condition in the kernel of the full codifferential. 
\begin{definition}
For a weight $\omega$, define the localized weight as $$\omega_\tau(\sigma) = \omega(\tau * \sigma)$$ for $\sigma \in \Sigma_\tau(j)$ and $\tau \in \Sigma(l)$ such that $\tau * \sigma \in \Sigma(j+l+1)$. 
\end{definition}

In other words, for $\tau \in \Sigma(l)$, $\omega_\tau(\sigma)$ is the number of $(n-l-1)$-simplexes in $X_\tau$ containing $\sigma \in \Sigma_\tau(j)$. 

%\begin{lemma} \label{finalfix}
%Suppose $\sigma \in \Sigma(k)$ and $\tau \in \Sigma(j)$. Then, if $\omega$ is admissible $$\deg_{\omega_\tau}(\sigma) = (n-j-k-1)(k+2)!\omega_\tau(\sigma).$$ 
%\end{lemma}
%\begin{proof} Follows by Proposition \ref{combinatorial} applied to $X_\tau$, noting that $\omega_\tau$ is admissible $$\deg_{\omega_\tau}(\sigma) = \sum_{\substack{\eta \in \Sigma(k+1) \\ \sigma \subset \eta}} \omega_\tau(\eta) = (n-j-1-k)(k+2)! \omega_\tau(\sigma),$$ since the dimension of the link is $n-j-1$. \end{proof}
%\begin{proposition} \label{finalfix}
%For $\tau \in \Sigma(j)$, $\omega_\tau$ is an admissible weight on the subcomplex $X_\tau \subset X$ if $\omega$ is an admissible weight on $X$. Moreover, for $\eta \in \Sigma_\tau(l)$ $$\deg_{\omega_\tau}(\eta) = (l+1)! \, \omega(\tau * \eta).$$ 
%\end{proposition}
%Proof. Since $\omega$ is an admissible weight on $X$ and $\omega_\tau(\eta) = \omega(\tau * \eta)$, $\omega_\tau$ is an admissible weight on $X_\tau$. Thus, for the degree of $\omega_\tau$, by admissibility of $\omega$, $$\deg_{\omega_\tau}(\eta) = \sum_{\substack{\sigma \in \Sigma_\tau(l) \\ \eta \subset \tau * \sigma}} \omega(\tau * \sigma) = (l+1)! \, \omega(\eta),$$ since the simplexes appearing in the sum are precisely the simplexes $\tau * \sigma$ where $\sigma$ is the simplex obtained from $\eta$ by permuting its vertices, of which there are $(l+1)!$ in total. The claim then follows by observing that $\omega$ is symmetric. \begin{flushright}$\Box$ \end{flushright}

\begin{lemma}
$\Gamma_\tau$ acts by simplicial automorphisms on $X_\tau$. 
\end{lemma}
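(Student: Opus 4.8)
The plan is to verify the two requirements of a simplicial automorphism: that each element of $\Gamma_\tau$ maps the underlying complex $X_\tau$ to itself, and that it does so in a way compatible with the face relation, order, and weights. Fix $g \in \Gamma_\tau$, so by definition $g$ permutes the vertices of $\tau = (v_0,\dots,v_l)$ among themselves; since $\Gamma$ preserves the order of simplexes and $\Gamma_\tau$ is the stabilizer of the \emph{ordered} simplex, in fact $g(v_i) = v_i$ for each $i$.

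First I would show $g$ sends $X_\tau$ into $X_\tau$. Take a simplex $\{w_0,\dots,w_j\}$ of $X_\tau$; by definition it is disjoint from $\{v_0,\dots,v_l\}$ and $\{v_0,\dots,v_l\}\cup\{w_0,\dots,w_j\}$ is a simplex of $X$. Applying $g$, which is a simplicial automorphism of $X$, the image $\{g(w_0),\dots,g(w_j)\}$ is a simplex of $X$, and $\{v_0,\dots,v_l\}\cup\{g(w_0),\dots,g(w_j)\} = g\bigl(\{v_0,\dots,v_l\}\cup\{w_0,\dots,w_j\}\bigr)$ is again a simplex of $X$ since $g$ fixes each $v_i$. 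Disjointness from $\tau$ is preserved because $g$ is a bijection on vertices fixing the $v_i$: if some $g(w_m) = v_i$ then $w_m = g^{-1}(v_i) = v_i$, contradicting disjointness. Hence the image simplex again lies in $X_\tau$, and the same argument applied to $g^{-1} \in \Gamma_\tau$ shows $g$ restricts to a bijection on the simplexes of $X_\tau$.

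Next I would note that all the structural compatibilities are inherited for free. The face relation is preserved because it is preserved by $g$ on $X$ and the link is a full subcomplex in the relevant sense; orientation/order is preserved because $\Gamma$ acts preserving the order of simplexes of $X$, hence of the join simplexes $\tau * \sigma$, hence of $\sigma$ itself; and the localized weight is preserved because $\omega_\tau(\sigma) = \omega(\tau * \sigma) = \omega\bigl(g\cdot(\tau*\sigma)\bigr) = \omega(\tau * (g\cdot\sigma)) = \omega_\tau(g\cdot\sigma)$, using $\Gamma$-invariance of $\omega$ on $X$ and the fact that $g$ fixes $\tau$. Thus $g|_{X_\tau}$ is a weight- and order-preserving simplicial automorphism of $X_\tau$, and $g \mapsto g|_{X_\tau}$ is a group homomorphism $\Gamma_\tau \to \mathrm{Aut}(X_\tau)$.

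I do not expect any genuine obstacle here; the only point requiring a little care is the observation that a $\Gamma$-stabilizer of an \emph{ordered} simplex fixes its vertices pointwise — this is what makes the disjointness argument and the identity $g\cdot(\tau*\sigma) = \tau*(g\cdot\sigma)$ work cleanly, and it relies on the standing assumption that $\Gamma$ acts preserving order.
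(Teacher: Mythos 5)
Your proposal is correct and follows essentially the same route as the paper: apply $g$ to the join $\tau * \sigma$, use that $g \in \Gamma_\tau$ fixes $\tau$ (pointwise, since $\Gamma_\tau$ stabilizes the ordered simplex) to get $g\cdot(\tau*\sigma) = \tau*(g\cdot\sigma)$, and conclude $g\cdot\sigma \in X_\tau$. You are somewhat more explicit than the paper about disjointness, bijectivity via $g^{-1}$, and preservation of the localized weight, but these are refinements of the same argument rather than a different approach.
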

\begin{proof} Let $\sigma \in X_\tau$ and suppose $g \in \Gamma_\tau$. Since, $\sigma \subset \tau * \sigma$, the join of $\sigma$ and $\tau$, it follows that $g \cdot \sigma \subset g \cdot \tau * g \cdot \sigma = \tau * g \cdot \sigma$ as $\Gamma$ acts by simplicial automorphisms on $X$ and $g \in \Gamma_\tau$. Thus, $g \cdot \sigma$ is a simplex in $\tau * g \cdot \sigma$, and so $g \cdot \sigma \in X_\tau$ since it is disjoint from $\tau$. Hence, $\Gamma_\tau$ act by simplicial automorphisms on $X_\tau$. \end{proof}

\begin{lemma}
For $\eta \in X_\tau$, $\Gamma_{\tau \eta} = \Gamma_\tau \cap \Gamma_\eta$.
\end{lemma}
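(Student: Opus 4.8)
The statement concerns the stabilizer of the join $\tau*\eta$, which I will write as $\tau\eta$ following the notation in the excerpt. The plan is to prove the two inclusions separately, each by a direct unwinding of the definitions of the $\Gamma$-action on ordered simplexes and of the join operation. The only structural fact needed is that $\Gamma$ acts by simplicial automorphisms preserving the ordering of vertices, so that $g\cdot(\tau*\eta)=(g\cdot\tau)*(g\cdot\eta)$ for every $g\in\Gamma$; this is immediate from the fact that the join juxtaposes the two ordered tuples in that order and $g$ acts vertexwise.

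For the inclusion $\Gamma_\tau\cap\Gamma_\eta\subseteq\Gamma_{\tau\eta}$: if $g\in\Gamma_\tau\cap\Gamma_\eta$, then $g\cdot\tau=\tau$ and $g\cdot\eta=\eta$, so $g\cdot(\tau*\eta)=(g\cdot\tau)*(g\cdot\eta)=\tau*\eta$, hence $g\in\Gamma_{\tau\eta}$. This direction is essentially a one-line verification.

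For the reverse inclusion $\Gamma_{\tau\eta}\subseteq\Gamma_\tau\cap\Gamma_\eta$: suppose $g\cdot(\tau*\eta)=\tau*\eta$ as ordered simplexes. Write $\tau=(v_0,\dots,v_l)$ and $\eta=(w_0,\dots,w_j)$, so that $\tau*\eta=(v_0,\dots,v_l,w_0,\dots,w_j)$; since $g$ preserves the order of vertices, $g\cdot(\tau*\eta)=(g(v_0),\dots,g(v_l),g(w_0),\dots,g(w_j))$, and equating entrywise with $\tau*\eta$ gives $g(v_i)=v_i$ for $0\le i\le l$ and $g(w_m)=w_m$ for $0\le m\le j$. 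The first set of equations says $g\cdot\tau=\tau$, i.e. $g\in\Gamma_\tau$; the second says $g\cdot\eta=\eta$, i.e. $g\in\Gamma_\eta$. (Here it matters that $\tau$ and $\eta$ are disjoint — which holds because $\eta\in X_\tau$ — so that the vertices $v_0,\dots,v_l,w_0,\dots,w_j$ really are the $l+j+2$ distinct vertices of $\tau*\eta$ and the entrywise comparison is legitimate; but even without disjointness the entrywise comparison of the two ordered tuples is valid.) Combining, $g\in\Gamma_\tau\cap\Gamma_\eta$.

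There is no real obstacle here: the whole argument rests on the fact that the action is order-preserving, so that equality of ordered joined simplexes forces coordinatewise equality, which in turn splits into equality of the two pieces. The mild point to be careful about is that $\Gamma_\eta$ is here interpreted as the stabilizer in $\Gamma$ (not merely in $\Gamma_\tau$) of the ordered simplex $\eta$ viewed as a simplex of $X$; with that reading the identity holds verbatim, and it is consistent with the previous lemma establishing that $\Gamma_\tau$ acts on $X_\tau$.
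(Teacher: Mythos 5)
Your proof is correct, but it takes a different route from the paper because you and the paper read the symbol $\Gamma_{\tau\eta}$ differently. In the paper, $\Gamma_{\tau\eta}$ denotes the stabilizer of $\eta$ under the $\Gamma_\tau$-action on the link $X_\tau$ (this is how it is used in the seminorm on $\mathcal{E}^{(k,p)}(X_\tau,E)$), so the paper's entire proof is the one-line definitional identity $\Gamma_{\tau\eta}=\lbrace g\in\Gamma_\tau\colon g\cdot\eta=\eta\rbrace=\Gamma_\tau\cap\Gamma_\eta$. You instead interpret $\Gamma_{\tau\eta}$ as the stabilizer in $\Gamma$ of the join $\tau*\eta$, and prove the equality by noting that the action is order-preserving, so fixing the ordered tuple $(v_0,\dots,v_l,w_0,\dots,w_j)$ forces vertexwise fixing and hence fixes $\tau$ and $\eta$ separately. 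Your argument is sound and in fact establishes slightly more: it shows that the two possible readings of the notation coincide, i.e.\ the stabilizer of the ordered join equals the stabilizer of $\eta$ inside $\Gamma_\tau$, which is the observation that makes the paper's casual notation harmless. The paper's reading buys brevity (nothing needs to be checked), while yours buys robustness: it justifies the identity even where later computations (e.g.\ Proposition \ref{BS1.10} and the localization arguments) implicitly pass between stabilizers of joins in $X$ and stabilizers in the link. Your parenthetical care about disjointness of $\tau$ and $\eta$ is fine but, as you note, not actually needed for the entrywise comparison.
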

\begin{proof} $\Gamma_{\tau \eta} = \lbrace g \in \Gamma_\tau \colon g \cdot \eta = \eta \rbrace = \Gamma_\tau \cap \Gamma_\eta.$ \end{proof}

\begin{definition}
We denote by 
\begin{enumerate}
\item[i.] $\pi_\tau$ the restriction of $\pi$ to $\Gamma_\tau$, that is $\pi_\tau = \pi \vert_{\Gamma_\tau}$;
\item[ii.] $d_\tau$ the restriction of $d$ to $\mathcal{E}^{(k,p)}(X_\tau, E)$, that is $d_\tau = d \vert_{\mathcal{E}^{(k,p)}(X_\tau, E)}$;
\item[iii.] $\delta_\tau$ the restriction of $\delta$ to $L^{(k+1,p^*)}(X_\tau, E^*)$, that is $\delta_\tau = \delta \vert_{L^{(k+1,p^*)}(X_\tau, E^*)}$. 
\end{enumerate}
\end{definition}

\begin{definition}
Let $$ \mathcal{E}^{(k,p)}(X_\tau,E) = \left\lbrace f \colon \Sigma_\tau(k) \rightarrow E \colon \Vert f \Vert^p_{(k,p)} < \infty \right\rbrace$$ for $X_\tau \subset X$ denote the vector space of $p$-summable functions with semi-norm $$\Vert f \Vert_{(k,p)} = \left( \sum_{\sigma \in \Sigma_\tau(k, \Gamma_\tau)} \Vert f(\sigma) \Vert^p_E \dfrac{\omega_\tau(\sigma)}{(k+1)! \vert \Gamma_{\tau \sigma}\vert}\right)^{\dfrac{1}{p}}.$$
\end{definition}

%\begin{definition}
%Let $$\left(\bigoplus_{\Sigma_\tau(k)} E\right)_p= \left\lbrace f \in \bigoplus_{\Sigma_\tau(k)} E \colon \Vert f \Vert^p_{(k,p, \omega_\tau)} < \infty \right\rbrace,$$ for $\Sigma_\tau(k) \subset \Sigma(k)$ denote the Banach space of $p$-summable maps $f: \Sigma_\tau(k) \rightarrow E$ where $$\Vert f \Vert_{(k,p,\omega_\tau)} = \left( \sum_{\sigma \in \Sigma(k, \Gamma_\tau)} \Vert f(\sigma) \Vert^p_E \dfrac{\omega_\tau (\sigma)}{(k+1)!\vert \Gamma_{\tau \sigma} \vert} \right)^{\frac{1}{p}},$$ for short $\Vert f \Vert^p_{(k,p)}$ if there is no risk of confusion.
%\end{definition}

\begin{definition}
Let $L^{(k,p)}(X_\tau, E)$ denote the subspace $$\left\lbrace f \in \mathcal{E}^{(k,p)}(X_\tau,E)\, \colon \, f \, \mathrm{alternating} \, \mathrm{and} \, \forall g \in \Gamma_\tau, \forall \sigma \in \Sigma_\tau(k), f(g \cdot \sigma) = \pi_{\tau g} \cdot f(\sigma) \right\rbrace$$ of simplicial $k$-cochains of $X_\tau$ twisted by $\pi_\tau$.  
\end{definition}

\begin{definition}
For $f \in \mathcal{E}^{(k,p)}(X,E)$ and $\tau \in \Sigma(j)$ such that $k-j-1 \geq 0$, the localization of $f$ to $X_\tau$ is the function $f_\tau \in \mathcal{E}^{(k,p)}(X_\tau,E) \in \mathcal{E}^{(k-j-1, p)}(X_\tau, E)$ defined by the localization map $${}_\tau \colon \mathcal{E}^{(k,p)}(X,E) \longrightarrow \mathcal{E}^{(k-j-1,p)}(X_\tau,E)$$ where for all $\sigma \in \Sigma_\tau(k-j-1)$, $f_\tau(\sigma) = f (\tau * \sigma)$. Similarly we define its dual $\mathcal{E}^{(k,p^*)}(X,E^*) \longrightarrow \mathcal{E}^{(k,p^*)}(X_\tau,E^*)$, also denoted by ${}_\tau$.
\end{definition}

\begin{definition}
For $f \in \mathcal{E}^{(k,p)}(X,E)$ and $\tau \in \Sigma(j)$ such that $k+j+1 \leq n$, the restriction of $f$ to $X_\tau$ is the function $f^\tau \in \mathcal{E}^{(k,p)}(X_\tau,E)$ defined by the restriction map $${}^\tau \colon \mathcal{E}^{(k,p)}(X,E) \rightarrow \mathcal{E}^{(k,p)}(X_\tau,E)$$ where for all $\sigma \in \Sigma_\tau(k)$, $f^\tau (\sigma) = f(\sigma)$. Similarly we define its dual $\mathcal{E}^{(k,p^*)}(X,E^*) \longrightarrow \mathcal{E}^{(k-j-1,p^*)}(X_\tau,E^*)$, also denoted by ${}^\tau$.
\end{definition}

Next, we consider a number of local to global equalities that will be of use. We begin by the following useful local relation: 
\begin{proposition} \label{BS1.10}
For $f_\tau \in L^{(k,p)}(X_\tau, E)$ $$\Vert f_\tau \Vert^p_{(k,p)} = \dfrac{1}{(k+1)!\vert \Gamma_\tau \vert} \sum_{\sigma \in \Sigma_\tau(k)} \Vert f_\tau(\sigma) \Vert^p_E \omega(\tau * \sigma).$$
\end{proposition}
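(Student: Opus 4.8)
The plan is to unfold the definition of $\Vert \cdot \Vert_{(k,p)}$ on $X_\tau$ and convert the sum over $\Gamma_\tau$-orbit representatives into a sum over all oriented $k$-simplexes of $X_\tau$ by an orbit--stabilizer count. Recall that by definition $\omega_\tau(\sigma) = \omega(\tau * \sigma)$ and that, by the lemma identifying stabilizers in the link, $\Gamma_{\tau\sigma} = \Gamma_\tau \cap \Gamma_\sigma$ is precisely the stabilizer of $\sigma \in \Sigma_\tau(k)$ under the $\Gamma_\tau$-action on $X_\tau$. Since $X$ is locally finite, $X_\tau$ is finite, so $\Sigma_\tau(k)$ is finite and every sum in sight is a finite sum; there is no convergence issue, and the statement is purely a regrouping identity.

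The key observation is that the summand $\Vert f_\tau(\sigma)\Vert_E^p\, \omega(\tau*\sigma)$ is constant along $\Gamma_\tau$-orbits. Indeed, for $g \in \Gamma_\tau$ we have $g\cdot\tau = \tau$, hence $\tau * (g\cdot\sigma) = (g\cdot\tau)*(g\cdot\sigma) = g\cdot(\tau*\sigma)$, so $\omega(\tau*(g\cdot\sigma)) = \omega(\tau*\sigma)$ by $\Gamma$-invariance of $\omega$; and since $f_\tau$ is twisted by $\pi_\tau$, $f_\tau(g\cdot\sigma) = \pi_{\tau g} f_\tau(\sigma) = \pi_g f_\tau(\sigma)$, which has the same $E$-norm as $f_\tau(\sigma)$ because $\pi$ is isometric. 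Consequently, by the orbit--stabilizer theorem the $\Gamma_\tau$-orbit of a representative $\sigma$ has cardinality $|\Gamma_\tau|/|\Gamma_{\tau\sigma}|$, and summing the (orbit-constant) summand over that orbit contributes $\frac{|\Gamma_\tau|}{|\Gamma_{\tau\sigma}|}\Vert f_\tau(\sigma)\Vert_E^p\,\omega(\tau*\sigma)$.

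Putting these together, I would write
\[
\sum_{\sigma\in\Sigma_\tau(k)} \Vert f_\tau(\sigma)\Vert_E^p\,\omega(\tau*\sigma) \;=\; \sum_{\sigma\in\Sigma_\tau(k,\Gamma_\tau)} \frac{|\Gamma_\tau|}{|\Gamma_{\tau\sigma}|}\,\Vert f_\tau(\sigma)\Vert_E^p\,\omega_\tau(\sigma),
\]
and then divide both sides by $(k+1)!\,|\Gamma_\tau|$ to recover exactly $\Vert f_\tau\Vert_{(k,p)}^p$ as defined. As with Lemma~\ref{independentofrepresentatives}, the right-hand side (and hence the identity) does not depend on the chosen set of representatives $\Sigma_\tau(k,\Gamma_\tau)$. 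The argument involves no real obstacle; the only point that needs care is to use the correct stabilizer — the stabilizer of $\sigma$ \emph{inside} $\Gamma_\tau$, which is $\Gamma_{\tau\sigma} = \Gamma_\tau \cap \Gamma_\sigma$ by the earlier lemma — rather than the full stabilizer $\Gamma_\sigma$ in $\Gamma$.
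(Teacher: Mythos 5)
Your proposal is correct and follows essentially the same route as the paper: both unfold the definition of $\Vert\cdot\Vert_{(k,p)}$ on $X_\tau$, use that the summand is constant along $\Gamma_\tau$-orbits (via $\Gamma$-invariance of $\omega$ and the isometric twisting of $f_\tau$), and apply the orbit--stabilizer count $\vert\Gamma_\tau\sigma\vert = \vert\Gamma_\tau\vert/\vert\Gamma_{\tau\sigma}\vert$ to pass between a sum over representatives and a sum over all of $\Sigma_\tau(k)$. Your write-up is if anything slightly more explicit than the paper's about why the summand is orbit-invariant and which stabilizer is the right one.
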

Proof. \begin{align}
\Vert f_\tau \Vert^p_{(k,p)} &= \sum_{\sigma \in \Sigma_{\tau}(k, \Gamma_\tau)} \Vert f_\tau(\sigma) \Vert^p_E \dfrac{\omega_\tau(\sigma)}{\vert \Gamma_{\tau \sigma}\vert(k+1)!} \substack{{} \\ =} \sum_{\sigma \in \Sigma_{\tau}(k)} \dfrac{\Vert f_\tau(\sigma) \Vert^p_E}{\vert \Gamma_\tau \sigma \vert} \dfrac{\omega_\tau(\sigma)}{\vert \Gamma_{\tau \sigma}\vert(k+1)!} \nonumber \\ &\substack{{} \\ =} \sum_{\sigma \in \Sigma_{\tau}(k)} \Vert f_\tau(\sigma) \Vert^p_E \dfrac{\vert \Gamma_{\tau\sigma}\vert}{\vert\Gamma_\tau\vert} \dfrac{\omega_\tau(\sigma)}{\vert \Gamma_{\tau \sigma}\vert(k+1)!} = \sum_{\sigma \in \Sigma_{\tau}(k)} \Vert f_\tau(\sigma) \Vert^p_E  \dfrac{\omega_\tau(\sigma)}{\vert \Gamma_{\tau}\vert(k+1)!}, \nonumber \\ &= \dfrac{1}{(k+1)! \vert \Gamma_\tau \vert}\sum_{\sigma \in \Sigma_{\tau}(k)} \Vert f_\tau(\sigma) \Vert^p_E {\omega}(\tau *\sigma) \nonumber
\end{align} where the second equality in terms of the $k$-simplexes of $X_\tau$ follows from the $\Gamma_\tau$ invariance of the norm, and the third by the fact that $\vert \Gamma_\tau \sigma \vert$, the size of the $\Gamma_\tau$ orbit of $\sigma$, is $\vert \Gamma_\tau \vert / \vert \Gamma_{\tau \sigma} \vert$. \begin{flushright}$\Box$ \end{flushright}

%The following is a straightforward generalization of lemma 3.2 in I. Oppenheim's Vanishing of $L^2$ cohomology and property (T) for groups acting on simplicial complexes,

\begin{proposition} \label{oppenheim}
Let $f \in L^{(k,p)}(X,E)$. If $k + 1 \leq n$, then 
$$(n-k) \Vert f \Vert_{(k,p)}^p = \sum_{\tau \in \Sigma(0, \Gamma)} \Vert f^\tau \Vert_{(k,p)}^p.$$
\end{proposition}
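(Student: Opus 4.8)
The plan is to expand the right-hand side using the definition of the restriction $f^\tau$ and the local norm, then switch the order of summation so that the outer sum runs over vertices $\tau \in \Sigma(0)$ and the inner sum over $k$-simplexes $\sigma$ containing $\tau$, and finally recognize the result as a multiple of $\Vert f \Vert_{(k,p)}^p$. Concretely, for a vertex $\tau \in \Sigma(0)$ the restriction $f^\tau$ is the function on $\Sigma_\tau(k)$ given by $f^\tau(\sigma) = f(\sigma)$, and applying Proposition \ref{BS1.10} to $f^\tau \in L^{(k,p)}(X_\tau, E)$ gives $\Vert f^\tau \Vert_{(k,p)}^p = \frac{1}{(k+1)!\vert \Gamma_\tau \vert} \sum_{\sigma \in \Sigma_\tau(k)} \Vert f(\sigma) \Vert_E^p\, \omega(\tau * \sigma)$. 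Here $\sigma \in \Sigma_\tau(k)$ corresponds bijectively to a $k$-simplex of $X$ disjoint from $\tau$ such that $\tau * \sigma$ is a $(k+1)$-simplex of $X$, and $\omega(\tau * \sigma)$ is the weight of that $(k+1)$-simplex.

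First I would sum this over $\tau \in \Sigma(0, \Gamma)$ and rewrite the double sum as running over pairs $(\tau, \rho)$ where $\tau$ is a vertex and $\rho = \tau * \sigma \in \Sigma(k+1)$ is a $(k+1)$-simplex containing $\tau$; note that for such a pair, $\sigma$ is the face $\rho$ with $\tau$ removed, so $\Vert f^\tau(\sigma) \Vert_E = \Vert f(\sigma) \Vert_E$ equals $\Vert f(\rho_i) \Vert_E$ for the appropriate index $i$, but since $f$ is alternating, $\Vert f(\sigma) \Vert_E$ depends only on the underlying unordered simplex and equals $\Vert f \Vert$ on any ordering — in particular it is natural to instead write, for each unordered $(k+1)$-simplex containing $\tau$, a sum over its $k$-faces not containing... actually the cleaner route is to apply Proposition \ref{switchingsums} directly. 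I would set $F(\tau, \rho) = \Vert f \big(\text{the }k\text{-face of }\rho\text{ opposite }\tau\big)\Vert_E^p\, \omega(\rho)$ for the pair $\tau \in \Sigma(0)$, $\rho \in \Sigma(k+1)$ with $\tau \subset \rho$; this is $\Gamma$-invariant because $f$ is twisted, $\pi_g$ is an isometry, and $\omega$ is $\Gamma$-invariant. Then Proposition \ref{switchingsums} with $l = 0$, $k$ replaced by $k+1$ converts $\sum_{\tau \in \Sigma(0,\Gamma)} \frac{1}{\vert \Gamma_\tau\vert}\sum_{\rho \supset \tau} F(\tau,\rho)$ into $\sum_{\rho \in \Sigma(k+1,\Gamma)} \frac{1}{\vert \Gamma_\rho \vert}\sum_{\tau \subset \rho} F(\tau, \rho)$.

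Then I would evaluate the inner sum: for a fixed $\rho \in \Sigma(k+1)$, summing $F(\tau, \rho)$ over the $k+2$ vertices $\tau$ of $\rho$ gives $\omega(\rho) \sum_{i=0}^{k+1}\Vert f(\rho_i)\Vert_E^p$, which equals $(k+2)\,\omega(\rho)\,\Vert f \Vert$ evaluated at... more precisely, since $f$ is alternating each $\Vert f(\rho_i) \Vert_E$ is one of the values of $\Vert f \Vert$ on the $k$-faces of $\rho$; rather than collapse these I would just keep $\sum_{i=0}^{k+1} \Vert f(\rho_i)\Vert_E^p$ and then relate $\sum_{\rho \in \Sigma(k+1,\Gamma)} \frac{\omega(\rho)}{\vert\Gamma_\rho\vert}\sum_{i}\Vert f(\rho_i)\Vert_E^p$ back to $\Vert f\Vert_{(k,p)}^p$ by the same face/switching argument used in the proof that $d$ is bounded — namely rewrite the sum over faces as $\frac{1}{(k+1)!}\sum_{t \subset \rho, t \in \Sigma(k)}\Vert f(t)\Vert_E^p$, apply Proposition \ref{switchingsums} again (this time between $\Sigma(k)$ and $\Sigma(k+1)$), and then use Proposition \ref{combinatorial} with the index $k$ to get the factor $\sum_{\rho \supset t}\omega(\rho) = (n-k)(k+2)!\,\omega(t)$. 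Tracking the combinatorial constants $(k+1)!$, $(k+2)!$, and $(n-k)(k+2)!$ together with the $\frac{1}{(k+1)!\vert\Gamma_\tau\vert}$ from Proposition \ref{BS1.10}, everything should collapse to exactly $(n-k)\Vert f\Vert_{(k,p)}^p$. The main obstacle is purely bookkeeping: keeping the weights $\omega_\tau(\sigma) = \omega(\tau*\sigma)$ versus $\omega(\rho)$ straight through two applications of Proposition \ref{switchingsums} (once at level $l=0$ and once at level $l=k$) and making sure the factorial normalizations cancel correctly; there is no conceptual difficulty, since alternation, twisting, and isometry of $\pi_g$ are exactly what make the relevant functions $\Gamma$-invariant so that the switching lemma applies.
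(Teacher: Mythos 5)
Your proposal is correct and follows essentially the same route as the paper's own proof: localize via Proposition \ref{BS1.10}, rewrite the link sum as a sum over $(k+1)$-simplexes containing the vertex, apply Proposition \ref{switchingsums} twice (at levels $l=0$ and $l=k$), and finish with Proposition \ref{combinatorial}. The only thing left implicit is the explicit bookkeeping of the ordering factors (e.g.\ the $1/(k+2)$ when passing from $\Sigma_\tau(k)$ to ordered $(k+1)$-simplexes containing $\tau$), which the paper carries out and which indeed collapses to the stated constant $(n-k)$.
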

\begin{proof} For $\xi \in \Sigma(k+1)$ such that $\tau \subset \xi$, denote by $\xi-\tau$ the $k$-simplex in $X_\tau$ obtained by removing the vertex $\tau$ from $\xi$. Now,
\begin{align}
\sum_{\tau \in \Sigma(0, \Gamma)} \Vert f^\tau \Vert_{(k,p)}^p &\substack{{} \\ =} \sum_{\tau \in \Sigma(0, \Gamma)} \dfrac{1}{(k+1)! \vert \Gamma_\tau \vert} \sum_{\sigma \in \Sigma_\tau(k)} \Vert f (\sigma) \Vert_E^p \omega (\tau * \sigma) \nonumber \\ &\substack{{} \\ =} \sum_{\tau \in \Sigma(0, \Gamma)} \dfrac{1}{(k+1)! \vert \Gamma_\tau \vert} \sum_{\substack{\xi \in \Sigma(k+1) \\ \tau \subset \xi}} \dfrac{1}{(k+2)} \Vert f (\xi - \tau) \Vert_E^p \omega (\xi) \nonumber \\ &= \sum_{\tau \in \Sigma(0, \Gamma)} \sum_{\substack{\xi \in \Sigma(k+1) \\ \tau \subset \xi}} \dfrac{1}{(k+2)! \vert \Gamma_\tau \vert}  \Vert f (\xi - \tau) \Vert_E^p \omega (\xi) \nonumber \\ &\substack{(*) \\ =} \sum_{\xi \in \Sigma(k+1, \Gamma)} \sum_{\substack{\tau \in \Sigma(0) \\ \tau \subset \xi}} \dfrac{1}{(k+2)! \vert \Gamma_\xi \vert}  \Vert f (\xi - \tau) \Vert_E^p \omega (\xi) \nonumber \\ &\substack{{} \\ =} \sum_{\xi \in \Sigma(k+1, \Gamma)} \sum_{\substack{\sigma \in \Sigma(k) \\ \sigma \subset \xi}} \dfrac{1}{(k+2)! \vert \Gamma_\xi \vert} \dfrac{1}{(k+1)!} \Vert f (\sigma) \Vert_E^p \omega (\xi) \nonumber \\ &\substack{(**) \\ =} \sum_{\sigma \in \Sigma(k, \Gamma)} \sum_{\substack{\xi \in \Sigma(k+1) \\ \sigma \subset \xi}} \dfrac{1}{(k+2)! \vert \Gamma_\sigma \vert} \dfrac{1}{(k+1)!} \Vert f (\sigma) \Vert_E^p \omega (\xi) \nonumber \\ &\substack{{} \\ =} \sum_{\sigma \in \Sigma(k, \Gamma)} \dfrac{1}{(k+2)! \vert \Gamma_\sigma \vert} \dfrac{1}{(k+1)!} \Vert f (\sigma) \Vert_E^p (n-k)(k+2)!\omega (\sigma) \nonumber \\ &= (n-k) \Vert f \Vert_{(k,p)}^p, \nonumber
\end{align}
where the first and second equality follow by Proposition \ref{BS1.10}. noting that $f^\tau (\sigma) = f(\sigma)$, writing $\xi - \tau$ as $\sigma$ and accounting for ordering. $(*)$ and $(**)$ follows by switching sums by Proposition \ref{switchingsums} and the second last equality follows by Proposition \ref{combinatorial}. \end{proof} 

%\begin{enumerate}
%\item by proposition \ref{BS1.10}. noting that $f^\tau (\sigma) = f(\sigma)$;
%\item for $\xi = \tau * \sigma$ and denoting by $\xi-\tau$ the $k$-simplex obtained from $\xi$ by deleting the vertex of $\tau$, and accounting for the resulting $(k+2)$ multiples of the same simplex produced by this deletion;
%\item writing $\xi - \tau$ as $\sigma \in \Sigma(k)$ and accounting for ordering;
%\item by proposition \ref{switchingsums}
%\item changing the summation once more by proposition \ref{switchingsums};
%\item by proposition \ref{combinatorial};
%\end{enumerate} \begin{flushright}$\Box$ \end{flushright}  

\begin{proposition} \label{summing}
Let $f \in L^{(k,p)}(X, E)$ and $0 \leq j < k$. Then, $$(k+1)! \Vert f \Vert^p_{(k,p)} = (k-j)! \sum_{\tau \in \Sigma(j, \Gamma)} \Vert f_\tau \Vert^p_{(k-j-1,p)}.$$
\end{proposition}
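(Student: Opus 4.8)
The plan is to expand the right-hand side using the local identity of Proposition \ref{BS1.10}, rewrite the sum over $k$-simplexes of each link $X_\tau$ as a sum over $k$-simplexes $\sigma$ of $X$ containing $\tau$, and then iterate the switching-sums identity (Proposition \ref{switchingsums}) to pass from indexing by $\tau \in \Sigma(j,\Gamma)$ to indexing by $\sigma \in \Sigma(k,\Gamma)$, keeping careful track of the combinatorial factors coming from orderings. First I would apply Proposition \ref{BS1.10} to $f_\tau \in L^{(k-j-1,p)}(X_\tau, E)$ to write
\[
\Vert f_\tau \Vert^p_{(k-j-1,p)} = \frac{1}{(k-j)!\,\vert \Gamma_\tau \vert} \sum_{\eta \in \Sigma_\tau(k-j-1)} \Vert f_\tau(\eta) \Vert^p_E\, \omega(\tau * \eta),
\]
and recall that $f_\tau(\eta) = f(\tau * \eta)$ by definition of localization.

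Next I would reorganize the double sum $\sum_{\tau \in \Sigma(j,\Gamma)} \sum_{\eta \in \Sigma_\tau(k-j-1)}$. The pairs $(\tau, \eta)$ with $\tau \in \Sigma(j)$, $\eta \in \Sigma_\tau(k-j-1)$ biject — after accounting for the number of ways of ordering the join — with pairs $(\tau, \sigma)$ where $\sigma \in \Sigma(k)$ and $\tau \subset \sigma$: given such a pair, $\eta$ is recovered as the ordered $(k-j-1)$-simplex $\sigma$ with the vertices of $\tau$ deleted, up to a sign which is irrelevant since $\Vert \cdot \Vert_E$ and $\omega$ are insensitive to it, and $\tau * \eta$ and $\sigma$ span the same unordered simplex so $\omega(\tau * \eta) = \omega(\sigma)$. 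The combinatorial bookkeeping here is exactly the sort already carried out in the proof of Proposition \ref{oppenheim} (the $\xi - \tau$ manipulation), and the counting factor relating $\sum_{\eta \in \Sigma_\tau(k-j-1)}$ to $\sum_{\sigma \in \Sigma(k),\, \tau \subset \sigma}$ will be $(j+1)!$ (the orderings of $\tau$ inside $\sigma$) divided by or multiplied against the $(k-j)!$ already present, ultimately producing the factor $(k-j)!$ on the stated right-hand side after everything is balanced against the $(k+1)!$ on the left.

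Having reduced the right-hand side to a $\Gamma$-invariant double sum over $\{(\tau,\sigma) : \tau \in \Sigma(j),\ \sigma \in \Sigma(k),\ \tau \subset \sigma\}$ with the summand proportional to $\Vert f(\sigma)\Vert_E^p\,\omega(\sigma)/\vert\Gamma_\tau\vert$, I would apply Proposition \ref{switchingsums} (with $l = j$, the present $k$ playing the role of $k$ there) to swap the order of summation, turning $\sum_{\tau \in \Sigma(j,\Gamma)} \sum_{\sigma \supset \tau} (\cdot)/\vert\Gamma_\tau\vert$ into $\sum_{\sigma \in \Sigma(k,\Gamma)} \sum_{\tau \subset \sigma} (\cdot)/\vert\Gamma_\sigma\vert$. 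The inner sum $\sum_{\tau \in \Sigma(j),\, \tau \subset \sigma} 1$ is then just a finite count of ordered $j$-subsimplexes of a $k$-simplex, namely $(k+1)!/(k-j)!$, which is independent of $\sigma$ and can be pulled out; collecting constants and comparing against the definition of $\Vert f \Vert^p_{(k,p)}$ yields the claimed identity. The main obstacle I anticipate is purely combinatorial: getting every factorial — the $(k-j)!$ from Proposition \ref{BS1.10}, the orderings of $\tau$ inside $\sigma$, the orderings of $\sigma$ inside, and the count of $j$-faces of a $k$-simplex — to cancel correctly so that the $(k+1)!$ and $(k-j)!$ land exactly where the statement claims; there is no conceptual difficulty, just the need to be meticulous, and the argument is essentially a $j$-parameter generalization of the $j=0$ case handled in Proposition \ref{oppenheim}.
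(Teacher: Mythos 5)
Your plan is the same as the paper's proof: expand via Proposition \ref{BS1.10}, reinterpret the sum over $\eta\in\Sigma_\tau(k-j-1)$ as a sum over ordered $k$-simplexes $\sigma\supset\tau$ with a multiplicity correction, switch sums with Proposition \ref{switchingsums}, and count the ordered $j$-faces of $\sigma$ as $(k+1)!/(k-j)!$ (which you state correctly). The one detail you leave unresolved you also guess wrongly: the multiplicity relating $\sum_{\eta\in\Sigma_\tau(k-j-1)}$ (i.e.\ the simplexes of the form $\tau*\eta$) to $\sum_{\sigma\in\Sigma(k),\,\tau\subset\sigma}$ is not ``$(j+1)!$''; each underlying unordered simplex containing $\tau$ appears $(k-j)!$ times as a join $\tau*\eta$ but $(k+1)!$ times as an ordered $\sigma$ with $\tau\subset\sigma$, so one inserts the factor $(k-j)!/(k+1)!$ when passing to the larger sum (this is exactly the $j$-parameter version of the $1/(k+2)$ in Proposition \ref{oppenheim}). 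With that factor in place the $(k-j)!$ from Proposition \ref{BS1.10} cancels, the switched inner sum contributes $(k+1)!/(k-j)!$, and the identity follows as you describe; so the proposal is essentially correct, needing only this one factor pinned down.
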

\begin{proof} 
\begin{align}
\sum_{\tau \in \Sigma(j, \Gamma)} \Vert f_\tau \Vert^p_{(k-j-1,p)} &\substack{{} \\ =} \sum_{\tau \in \Sigma(j, \Gamma)} \sum_{\eta \in \Sigma_\tau(k-j-1)} \dfrac{\omega(\tau *\eta)}{(k-j)! \vert \Gamma_\tau\vert} \Vert f_\tau(\eta)\Vert^p_E \nonumber \\ &\substack{{} \\ =} \sum_{\tau \in \Sigma(j, \Gamma)} \sum_{\substack{\sigma \in \Sigma(k) \\ \sigma = \tau * \eta}} \dfrac{\omega(\sigma)}{(k-j)!\vert \Gamma_\tau \vert} \Vert f(\sigma) \Vert^p_E \nonumber \\ &\substack{(*) \\ =} \sum_{\tau \in \Sigma(j, \Gamma)} \sum_{\substack{\sigma \in \Sigma(k) \\ \tau \subset \sigma}} \dfrac{(k-j)!}{(k+1)!} \dfrac{\omega(\sigma)}{(k-j)!\vert \Gamma_\tau \vert} \Vert f(\sigma) \Vert^p_E \nonumber \\ &= \sum_{\tau \in \Sigma(j, \Gamma)} \sum_{\substack{\sigma \in \Sigma(k) \\ \tau \subset \sigma}} \dfrac{\omega(\sigma)}{(k+1)!\vert \Gamma_\tau \vert} \Vert f(\sigma) \Vert^p_E \nonumber \\ &\substack{(**) \\ =} \sum_{\sigma \in \Sigma(k, \Gamma)} \sum_{\substack{\tau \in \Sigma(j) \\ \tau \subset \sigma}} \dfrac{\omega(\sigma)}{(k+1)!\vert \Gamma_\sigma \vert} \Vert f(\sigma) \Vert^p_E \nonumber \\ &\substack{(***) \\ =} \sum_{\sigma \in \Sigma(k, \Gamma)} \dfrac{(k+1)!}{(k-j)!} \dfrac{\omega(\sigma)}{(k+1)!\vert \Gamma_\sigma \vert} \Vert f(\sigma) \Vert^p_E \nonumber \\ &= \dfrac{(k+1)!}{(k-j)!} \Vert f \Vert_{(k,p)}^p, \nonumber
\end{align}
where the first and second equality follow by Proposition \ref{BS1.10} and writing $\tau * \eta$ as $\sigma \in \Sigma(k)$, respectively. On the other hand, $(*)$ follows since summing over all $\sigma \in \Sigma(k)$ such that $\tau \subset \sigma$ amounts to summing over each term in the previous sum $(k+1)! / ((k+1)-(j+1))! = (k+1)!/(k-j)!$ times recalling that $\omega$ is symmetric and $f$ alternating. $(**)$ follows by Proposition \ref{switchingsums}, and finally $(***)$ follows since there are $(k+1)!/(k-j)!$ terms independent of $\tau$ in the sum over all $\tau \in \Sigma(j)$ with vertices in $\sigma$.
%\begin{enumerate} 
%\item follows by proposition \ref{BS1.10}. since $f_\tau \in L^{(k,p)}(X_\tau, E)$; 
%\item follows by writing $\tau * \eta$ as $\sigma \in \Sigma(k)$; 
%\item follows since summing over all $\sigma \in \Sigma(k)$ such that $\tau \subset \sigma$ amounts, to summing over each term in the previous sum $(k+1)! / ((k+1)-(j+1))! = (k+1)!/(k-j)!$ times recalling that $\omega$ is symmetric and $f$ alternating;
%\item follows by proposition \ref{switchingsums}. since the norm of $f(\sigma)$ is $\Gamma$-invariant for $f \in L^{(k,p)}(X,E)$;
%\item follows since there are $(k+1)!/(k-j)!$ terms independent of $\tau$ in the sum over all $\tau \in %\Sigma(j)$ with vertices in $\sigma$.
%\end{enumerate} 
\end{proof}

\begin{corollary} \label{combinedsumming}
Suppose $f \in L^{(k,p)}(X,E)$. If $1 < k + 1 \leq n$, then $$ \sum_{\tau \in \Sigma(0, \Gamma)} \Vert f^\tau \Vert_{(k,p)}^p = \dfrac{n-k}{k+1} \sum_{\tau \in \Sigma(0, \Gamma)} \Vert f_\tau \Vert_{(k-1,p)}^p.$$
\end{corollary}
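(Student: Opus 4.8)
The statement is a direct combination of Proposition \ref{oppenheim} and Proposition \ref{summing}, so the proof will be two short substitutions together with a check that the hypotheses match. First I would invoke Proposition \ref{oppenheim}: since $f \in L^{(k,p)}(X,E)$ and $k+1 \leq n$, we have
$$\sum_{\tau \in \Sigma(0,\Gamma)} \Vert f^\tau \Vert_{(k,p)}^p = (n-k)\,\Vert f \Vert_{(k,p)}^p.$$
This rewrites the left-hand side of the desired identity entirely in terms of the global norm $\Vert f \Vert_{(k,p)}^p$.

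Next I would apply Proposition \ref{summing} with $j=0$. The hypothesis of that proposition requires $0 \leq j < k$, which is exactly the condition $k \geq 1$, i.e. $1 < k+1$, built into the statement of the corollary. With $j=0$ the proposition gives $(k+1)!\,\Vert f \Vert_{(k,p)}^p = k!\sum_{\tau \in \Sigma(0,\Gamma)} \Vert f_\tau \Vert_{(k-1,p)}^p$, that is,
$$\Vert f \Vert_{(k,p)}^p = \frac{1}{k+1}\sum_{\tau \in \Sigma(0,\Gamma)} \Vert f_\tau \Vert_{(k-1,p)}^p.$$

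Finally I would substitute this expression for $\Vert f \Vert_{(k,p)}^p$ into the identity obtained from Proposition \ref{oppenheim}, which yields
$$\sum_{\tau \in \Sigma(0,\Gamma)} \Vert f^\tau \Vert_{(k,p)}^p = (n-k)\,\Vert f \Vert_{(k,p)}^p = \frac{n-k}{k+1}\sum_{\tau \in \Sigma(0,\Gamma)} \Vert f_\tau \Vert_{(k-1,p)}^p,$$
which is the claim. There is no real obstacle here beyond confirming that the two hypotheses $k+1 \leq n$ (needed for Proposition \ref{oppenheim}) and $1 < k+1$ (needed so that $j=0<k$ in Proposition \ref{summing}) are precisely the two assumptions stated in the corollary; everything else is an immediate algebraic combination of the two preceding local-to-global identities.
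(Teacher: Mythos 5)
Your proof is correct and follows exactly the paper's own argument: apply Proposition \ref{oppenheim} to rewrite $\sum_{\tau}\Vert f^\tau\Vert_{(k,p)}^p$ as $(n-k)\Vert f\Vert_{(k,p)}^p$, then apply Proposition \ref{summing} with $j=0$ to express $\Vert f\Vert_{(k,p)}^p$ as $\frac{1}{k+1}\sum_{\tau}\Vert f_\tau\Vert_{(k-1,p)}^p$, and combine. Your check that the hypotheses $k+1\leq n$ and $1<k+1$ match the requirements of the two propositions is exactly the content of the paper's one-line proof.
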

\begin{proof} Follows immediately by Proposition \ref{oppenheim} and Proposition \ref{summing} above in the case $j=0$, 
\begin{align}
\dfrac{k!}{(k+1)!} \sum_{\tau \in \Sigma(0, \Gamma)} \Vert f_\tau \Vert_{(k-1,p)}^p = \Vert f \Vert^p_{(k,p)}  = \dfrac{1}{(n-k)}  \sum_{\tau \in \Sigma(0, \Gamma)} \Vert f^\tau \Vert^p_{(k,p)}. \nonumber
\end{align} \end{proof}

\begin{definition}
Define the localized average over a cochain $\phi$ by the map $$M : L^{(k,p)}(X_\tau,E) \rightarrow L^{(k,p)}(X_\tau,E)$$ $\phi_\tau \mapsto M \phi_\tau = \phi_\tau^0$ where $$\phi_\tau^0 (\sigma) = \left( \sum_{\sigma \in \Sigma_\tau(k)} \omega_\tau(\sigma) \right)^{-1} \sum_{\sigma \in \Sigma_\tau(k)} \omega_\tau(\sigma) \phi_\tau(\sigma).$$ Similarly, we define its dual as $\overline{M} : L^{(k,p^*)}(X_\tau,E^*) \rightarrow L^{(k,p^*)}(X_\tau,E^*)$. 
\end{definition}

\begin{corollary} \label{Mbound}
The map $M : L^{(k,p)}(X_\tau,E) \rightarrow L^{(k,p)}(X_\tau,E)$ and its dual $\overline{M}$ are bounded projections onto the space of constant maps. 
\end{corollary}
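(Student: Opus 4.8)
The plan is to verify the three asserted properties of $M$ in turn: linearity and idempotency, the projection image being exactly the constant maps, and boundedness, with the dual statement for $\overline{M}$ following by the identical argument applied to $E^*$ and $p^*$ in place of $E$ and $p$. First I would observe that $M$ is linear in $\phi_\tau$, since the normalizing factor $\left(\sum_{\sigma \in \Sigma_\tau(k)} \omega_\tau(\sigma)\right)^{-1}$ is a fixed positive scalar (finite and nonzero because $X_\tau$ is finite and $\omega_\tau(\sigma) \geq 1$ for every simplex). Then I would check that the image of $M$ lands in $L^{(k,p)}(X_\tau,E)$: the output $\phi_\tau^0$ is by construction a constant function on $\Sigma_\tau(k)$ (its value does not depend on the argument $\sigma$), and one must confirm that a constant function is genuinely an admissible element of $L^{(k,p)}(X_\tau,E)$ — i.e. alternating and twisted by $\pi_\tau$. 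Here is where a subtlety enters: a nonzero constant map is alternating only if it is the zero map, so strictly the ``space of constant maps'' in the statement should be read as those constant cochains compatible with the alternating/twisted constraints, and for this to be consistent the value $\phi_\tau^0$ must be a vector fixed by $\pi_{\tau g}$ for all $g\in\Gamma_\tau$; this follows because $\phi_\tau$ itself is twisted and the sum $\sum_\sigma \omega_\tau(\sigma)\phi_\tau(\sigma)$ is then $\Gamma_\tau$-invariant using $\Gamma_\tau$-invariance of $\omega_\tau$ and the reindexing $\sigma \mapsto g\cdot\sigma$.

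Next I would establish idempotency, $M^2 = M$: applying $M$ to a constant cochain $c$ gives $\left(\sum_\sigma \omega_\tau(\sigma)\right)^{-1}\sum_\sigma \omega_\tau(\sigma)\, c = c$, since $c$ factors out of the weighted sum and the weights cancel against the normalization; hence $M$ restricted to its image is the identity, so $M\circ M = M$. Combined with the previous paragraph this shows $M$ is a projection whose image is precisely the (admissible) constant cochains: every constant cochain is fixed, and conversely everything in the image is constant by construction.

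The remaining point, boundedness, is where I expect the only real work. Using Proposition \ref{BS1.10}, $\Vert \phi_\tau^0 \Vert_{(k,p)}^p = \frac{1}{(k+1)!\vert\Gamma_\tau\vert}\sum_{\sigma\in\Sigma_\tau(k)} \Vert \phi_\tau^0(\sigma)\Vert_E^p\, \omega(\tau*\sigma)$, and since $\phi_\tau^0(\sigma)$ is the constant vector $W^{-1}\sum_{\eta}\omega_\tau(\eta)\phi_\tau(\eta)$ with $W = \sum_{\eta}\omega_\tau(\eta)$, the factor $\sum_{\sigma}\omega_\tau(\sigma) = W$ reappears and this becomes $\frac{W}{(k+1)!\vert\Gamma_\tau\vert}\, \Vert W^{-1}\sum_{\eta}\omega_\tau(\eta)\phi_\tau(\eta)\Vert_E^p$. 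Now estimate the norm inside by the triangle inequality and then by a weighted-power-mean (Jensen / convexity of $t\mapsto t^p$ for $p\geq 1$) argument: $\Vert W^{-1}\sum_{\eta}\omega_\tau(\eta)\phi_\tau(\eta)\Vert_E^p \leq W^{-1}\sum_{\eta}\omega_\tau(\eta)\Vert\phi_\tau(\eta)\Vert_E^p$. Substituting back, the $W$ cancels and one lands on $\frac{1}{(k+1)!\vert\Gamma_\tau\vert}\sum_{\eta}\omega_\tau(\eta)\Vert\phi_\tau(\eta)\Vert_E^p$, which is exactly $\Vert\phi_\tau\Vert_{(k,p)}^p$ by Proposition \ref{BS1.10} again. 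Thus $\Vert M\phi_\tau\Vert_{(k,p)} \leq \Vert\phi_\tau\Vert_{(k,p)}$, so $M$ is bounded with norm at most $1$ (in fact equal to $1$, attained on constants). The dual map $\overline{M}$ on $L^{(k,p^*)}(X_\tau,E^*)$ is handled verbatim, replacing $p$ with $p^*$ and $E$ with $E^*$, giving the same conclusion.
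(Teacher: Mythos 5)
Your proposal is correct, and its skeleton (image lands in $L^{(k,p)}(X_\tau,E)$, idempotency, boundedness, dual case verbatim) matches the paper's proof; the interesting difference is in how the two hard points are handled. For boundedness the paper argues crudely: it bounds $\omega_\tau$ above and below by $C=\max_\sigma\omega_\tau(\sigma)$ and $D=\min_\sigma\omega_\tau(\sigma)$ and the vector sum by $\vert\Sigma_\tau(k)\vert^p$ times the sum of $p$-th powers, ending with $\Vert M\phi_\tau\Vert_{(k,p)}^p\leq (C/D)^p\Vert\phi_\tau\Vert_{(k,p)}^p$, which suffices since only continuity is needed; your weighted Jensen (power-mean) estimate is sharper and cleaner, giving $\Vert M\phi_\tau\Vert_{(k,p)}\leq\Vert\phi_\tau\Vert_{(k,p)}$, i.e. norm at most $1$, with the weights cancelling exactly against the normalization via Proposition \ref{BS1.10}. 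For well-definedness the paper is terser than you are: it simply says $M\phi_\tau$ is alternating and twisted ``as a finite weighted sum of such functions,'' whereas you correctly isolate the real content, namely that the averaged vector is fixed by $\pi_{\tau g}$ for $g\in\Gamma_\tau$ (by $\Gamma_\tau$-invariance of $\omega_\tau$ and reindexing), so the constant cochain is twisted, and you rightly flag that for $k\geq 1$ a nonzero constant cochain cannot be alternating — in fact in that case the alternation of $\phi_\tau$ together with the symmetry of $\omega_\tau$ forces the average over all ordered simplexes to vanish, so the image is still inside $L^{(k,p)}(X_\tau,E)$, and the case actually used later ($k=0$) is unproblematic. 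The only nitpick is your parenthetical that the norm is ``equal to $1$, attained on constants'': this presupposes a nonzero admissible constant cochain (a $\pi_\tau$-invariant vector, and $k=0$), which need not exist; it is harmless since the statement only claims boundedness.
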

\begin{proof} $M$ is well-defined. Towards this end, let $\phi_\tau \in L^{(k,p)}(X_\tau,E)$. Since $\omega_\tau$ is symmetric and $\Gamma_\tau$-invariant, and $\phi_\tau$ is alternating and twisted by $\pi_\tau$, $M \phi_\tau$ is alternating and twisted by $\pi_\tau$ as a finite weighted sum of such functions. Moreover,
\begin{align}
\Vert M \phi_\tau \Vert^p_{(k,p)} &= \dfrac{1}{(k+1)! \vert \Gamma_\tau \vert} \sum_{\eta \in \Sigma_\tau(k)} \omega_\tau(\eta) \Vert \phi^0_\tau (\eta) \Vert^p_E \nonumber \\ &= \dfrac{1}{(k+1)! \vert \Gamma_\tau \vert} \left( \sum_{\sigma \in \Sigma_\tau(k)} \omega_\tau(\sigma) \right)^{-p} \left\Vert \sum_{\sigma \in \Sigma_\tau(k)} \omega_\tau(\sigma) \phi_\tau(\sigma) \right\Vert_E^p \sum_{\eta \in \Sigma_\tau(k)} \omega_\tau(\eta) \nonumber \\ &= \dfrac{1}{(k+1)! \vert \Gamma_\tau \vert} \left( \sum_{\sigma \in \Sigma_\tau(k)} \omega_\tau(\sigma) \right)^{1-p} \left\Vert \sum_{\sigma \in \Sigma_\tau(k)} \omega_\tau(\sigma) \phi_\tau(\sigma) \right\Vert_E^p \nonumber \\ &\leq \dfrac{C^p}{(k+1)! \vert \Gamma_\tau \vert} \left( \sum_{\sigma \in \Sigma_\tau(k)} \omega_\tau(\sigma) \right)^{1-p} \left\Vert \sum_{\sigma \in \Sigma_\tau(k)} \phi_\tau(\sigma) \right\Vert_E^p \nonumber \\ &\leq \dfrac{C^p \vert \Sigma_\tau(k) \vert^p}{(k+1)! \vert \Gamma_\tau \vert} \left( \sum_{\sigma \in \Sigma_\tau(k)} \omega_\tau(\sigma) \right)^{1-p} \sum_{\sigma \in \Sigma_\tau(k)} \left\Vert \phi_\tau(\sigma) \right\Vert_E^p \nonumber \\ &= \dfrac{C^p \vert \Sigma_\tau(k) \vert^p}{(k+1)! \vert \Gamma_\tau \vert} \left( \sum_{\sigma \in \Sigma_\tau(k)} \omega_\tau(\sigma) \right)^{1-p} \sum_{\sigma \in \Sigma_\tau(k)} \dfrac{\omega_\tau(\sigma)}{\omega_\tau(\sigma)}\left\Vert \phi_\tau(\sigma) \right\Vert_E^p \nonumber \\ &\leq \dfrac{C^p \vert \Sigma_\tau(k) \vert^p}{D (k+1)! \vert \Gamma_\tau \vert} \left( \sum_{\sigma \in \Sigma_\tau(k)} \omega_\tau(\sigma) \right)^{1-p} \sum_{\sigma \in \Sigma_\tau(k)} \omega_\tau(\sigma)\left\Vert \phi_\tau(\sigma) \right\Vert_E^p \nonumber \\ &\leq \dfrac{C^p \vert \Sigma_\tau(k) \vert^p}{D} \vert \Sigma_\tau(k) \vert^{1-p} D^{1-p} \Vert \phi_\tau \Vert^p_{(k,p)} \nonumber \\ &= \dfrac{(C/D)^p}{\vert \Sigma_\tau(k) \vert} \Vert \phi_\tau \Vert^p_{(k,p)} \leq (C/D)^p \Vert \phi_\tau \Vert^p_{(k,p)} \nonumber
\end{align}
where
\begin{enumerate}
\item[] $C = \max \lbrace \omega_\tau(\sigma) \colon \sigma \in \Sigma_\tau(k) \rbrace$ which exists as $\Sigma_\tau (k)$ contains only finitely many $k$-simplexes;
\item[] $D = \min \lbrace \omega_\tau(\sigma) \colon \sigma \in \Sigma_\tau(k) \rbrace$.
\end{enumerate}
Hence, $M \phi_\tau \in L^{(k,p)}(X_\tau,E)$ and $M$ is well-defined and bounded. Clearly $M$ is linear and 
\begin{align} M^2 \phi_\tau &= M \phi_\tau^0 \nonumber \\ &= \left( \sum_{\sigma \in \Sigma_\tau(k)} \omega_\tau(\sigma) \right)^{-1} \sum_{\sigma \in \Sigma_\tau(k)} \omega_\tau(\sigma) \phi_\tau^0(\sigma) \nonumber \\ &= \left( \sum_{\sigma \in \Sigma_\tau(k)} \omega_\tau(\sigma) \right)^{-1} \sum_{\sigma \in \Sigma_\tau(k)} \omega_\tau(\sigma) \left( \sum_{\eta \in \Sigma_\tau(k)} \omega_\tau(\eta) \right)^{-1} \sum_{\eta \in \Sigma_\tau(k)} \omega_\tau(\eta) \phi_\tau(\eta) \nonumber \\ &= \left( \sum_{\eta \in \Sigma_\tau(k)} \omega_\tau(\eta) \right)^{-1} \sum_{\eta \in \Sigma_\tau(k)} \omega_\tau(\eta) \phi_\tau(\eta) \nonumber \\ &= M \phi_\tau, \nonumber
\end{align} so $M$ is a continuous projection onto $ \lbrace f : \Sigma_\tau(k) \rightarrow E  \colon f = \, \mathrm{constant} \rbrace \subseteq L^{(k,p)}(X_\tau,E)$. Similarly for $\overline{M}$ \end{proof}

%\begin{proposition}
%Let $0 \leq j < k \leq n$, $\tau \in \Sigma(j)$ and $\phi \in L^{(k,p)}(X,E)$, then $$ \Vert \phi_\tau^0\Vert_{(k,p)} \leq \Vert \phi_\tau \Vert_{(k,p)}.$$ 
%\end{proposition}
%Proof. By a straightforward computation,
%\begin{align}
%\Vert \phi_\tau^0 \Vert^p_{(k,p)} &= \dfrac{1}{(k+1)! \Vert \Gamma_\tau \Vert} \sum_{\sigma \in \Sigma_\tau(k)} \Vert \phi_\tau^0 \Vert_E^p \omega (\tau * \sigma) \nonumber \\ &= \dfrac{1}{(k+1)! \Vert \Gamma_\tau \Vert} \sum_{\sigma \in \Sigma_\tau(k)} \left( \sum_{\eta \in \Sigma_\tau(k)} \omega(\tau * \eta) \right)^{-1} \left\Vert \sum_{\eta \in \Sigma_\tau(k)} \omega(\tau * \eta) \phi (\tau * \eta)\right\Vert_E^p \omega (\tau * \sigma) \nonumber \\ &= \dfrac{1}{(k+1)! \vert \Gamma_\tau \vert} \left\Vert \sum_{\eta \in \Sigma_\tau(k)} \omega(\tau * \eta) \phi (\tau * \eta)\right\Vert_E^p
%\end{align}

\begin{proposition} \label{averagenorms}
Let $0 \leq j < k \leq n$, $\tau \in \Sigma(j)$ and $\phi \in L^{(k,p^*)}(X,E^*)$. Then,
\begin{enumerate}
\item if $j < k-1$, then $\delta_\tau \phi_\tau = (-1)^{j+1}(\delta \phi)_\tau$;
\item if $j= k-1$, then $(-1)^k(n-k+1) \phi^0_\tau = \delta \phi(\tau)$ and
$$ \Vert \phi^0_\tau \Vert^{p^*}_{(0,p^*)} = \dfrac{\omega(\tau)}{(n-k+1)^{p^*-1}\vert \Gamma_\tau \vert} \Vert \delta \phi(\tau) \Vert^{p^*}_{E^*}.$$
\end{enumerate}
\end{proposition}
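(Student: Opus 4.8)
The plan is to deduce both parts from the pointwise formula for the codifferential (Proposition \ref{codifferential}), applied once in $X$ and once in the link $X_\tau$, reconciled by two facts: $\phi$ is alternating, and the weight of a simplex depends only on its vertex set. As a preliminary I would note that localization carries $L^{(k,p^*)}(X,E^*)$ into $L^{(k-j-1,p^*)}(X_\tau,E^*)$ — $\phi_\tau$ is alternating since permuting the vertices of $\sigma\in\Sigma_\tau(k-j-1)$ permutes the corresponding block of $\tau*\sigma$, twisted by $\pi_\tau$ since $\phi_\tau(g\cdot\sigma)=\phi(g\cdot(\tau*\sigma))=\pi_g\phi_\tau(\sigma)$ for $g\in\Gamma_\tau$, and of finite norm as $X_\tau$ is finite — and that $(\delta\phi)_\tau(\eta)=\delta\phi(\tau*\eta)$ by definition of localization, so both sides of (1) are legitimate objects.

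For part (1), I would fix $\eta\in\Sigma_\tau(k-j-2)$, which is meaningful precisely because $j<k-1$. Reading off Proposition \ref{codifferential} inside the weighted $\Gamma_\tau$-complex $X_\tau$ and unwinding $\omega_\tau(\,\cdot\,)=\omega(\tau*\,\cdot\,)$, $\phi_\tau(\,\cdot\,)=\phi(\tau*\,\cdot\,)$ yields
\[
\delta_\tau\phi_\tau(\eta)=\sum_{w}\frac{\omega(\tau*w*\eta)}{\omega(\tau*\eta)}\,\phi(\tau*w*\eta),
\]
with $w$ ranging over vertices of $X_\tau$ satisfying $w*\eta\in\Sigma_\tau(k-j-1)$; whereas Proposition \ref{codifferential} in $X$ at the $(k-1)$-simplex $\tau*\eta$ gives
\[
(\delta\phi)_\tau(\eta)=\delta\phi(\tau*\eta)=\sum_{v}\frac{\omega(v*\tau*\eta)}{\omega(\tau*\eta)}\,\phi(v*\tau*\eta),
\]
with $v\in\Sigma(0)$ ranging over those with $v*\tau*\eta\in\Sigma(k)$. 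These ranges describe the same set: a vertex $w$ of $X_\tau$ with $w*\eta\in\Sigma_\tau(k-j-1)$ is exactly a vertex $v\notin\bar\eta$ with $\{v\}\cup\bar\tau\cup\bar\eta\in X(k)$. Since $\omega$ ignores ordering, $\omega(\tau*w*\eta)=\omega(w*\tau*\eta)$, and since $\phi$ is alternating, pulling the chosen vertex to the front past the $j+1$ vertices of $\tau$ costs $j+1$ transpositions, so $\phi(\tau*w*\eta)=(-1)^{j+1}\phi(w*\tau*\eta)$. Substituting term by term gives $\delta_\tau\phi_\tau(\eta)=(-1)^{j+1}(\delta\phi)_\tau(\eta)$.

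For part (2), now $j=k-1$, so $\tau\in\Sigma(k-1)$, $\phi_\tau$ is a $0$-cochain on $X_\tau$, and $\Sigma_\tau(0)\neq\emptyset$ since $\omega(\tau)\geq1$ forces a $k$-simplex through $\tau$. The same manipulation — Proposition \ref{codifferential} in $X$ at $\tau$ plus the sign for moving a vertex past the $k$ vertices of $\tau$ — gives
\[
\delta\phi(\tau)=\frac{(-1)^{k}}{\omega(\tau)}\sum_{v\in\Sigma_\tau(0)}\omega_\tau(v)\,\phi_\tau(v).
\]
By the definition of the localized average, $\sum_{v}\omega_\tau(v)\phi_\tau(v)=\bigl(\sum_{v}\omega_\tau(v)\bigr)\phi_\tau^0$, and $\sum_{v\in\Sigma_\tau(0)}\omega_\tau(v)=\sum_{\bar\sigma\in X(k),\,\bar\tau\subset\bar\sigma}\omega(\bar\sigma)=(n-k+1)\omega(\tau)$ by Proposition \ref{combinatorial} with $k$ replaced by $k-1$ (every unordered $k$-simplex through $\tau$ being counted $(k+1)!$ times among the ordered ones, all of which contain $\tau$); hence $\delta\phi(\tau)=(-1)^{k}(n-k+1)\phi_\tau^0$. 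For the norm identity, Proposition \ref{BS1.10} in degree $0$ applied to the constant cochain $\phi_\tau^0$ over $X_\tau$ reads
\[
\Vert\phi_\tau^0\Vert^{p^*}_{(0,p^*)}=\frac{1}{\vert\Gamma_\tau\vert}\,\Vert\phi_\tau^0\Vert^{p^*}_{E^*}\sum_{v\in\Sigma_\tau(0)}\omega_\tau(v)=\frac{(n-k+1)\,\omega(\tau)}{\vert\Gamma_\tau\vert}\,\Vert\phi_\tau^0\Vert^{p^*}_{E^*},
\]
and substituting $\Vert\phi_\tau^0\Vert_{E^*}=(n-k+1)^{-1}\Vert\delta\phi(\tau)\Vert_{E^*}$ from the previous line produces exactly the stated formula.

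The only genuine obstacle is the combinatorial bookkeeping: keeping straight how many transpositions carry the chosen vertex across $\tau$ — hence the signs $(-1)^{j+1}$ and $(-1)^{k}$ — and checking that the summation range for the codifferential computed inside the link agrees with the one in $X$. Everything else is substitution of definitions together with one application each of Propositions \ref{combinatorial} and \ref{BS1.10}.
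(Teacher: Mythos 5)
Your proof is correct and follows essentially the same route as the paper: both parts rest on the pointwise formula of Proposition \ref{codifferential}, the symmetry of $\omega$ and alternation of $\phi$ to produce the signs $(-1)^{j+1}$ and $(-1)^k$, then Proposition \ref{combinatorial} for the count $(n-k+1)\omega(\tau)$ and Proposition \ref{BS1.10} for the norm identity. Your extra care in matching the summation ranges and checking that localization lands in $L^{(k-j-1,p^*)}(X_\tau,E^*)$ only makes explicit what the paper leaves implicit.
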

\begin{proof} $(1)$ As $\phi \in L^{(k,p^*)}(X,E^*)$, it follows that $\phi_\tau \in L^{(k-j-1,p^*)}(X_\tau,E^*)$ where $k-j-1 > 0$ so $\delta_\tau \phi_\tau \in L^{(k-j-2,p^*)}(X_\tau, E^*)$ and by Proposition \ref{codifferential}, 
\begin{align}
\delta_\tau \phi_\tau (\sigma) &\substack{(*) \\ =} \sum_{\substack{v \in \Sigma_\tau(0) \\ v * \sigma \in \Sigma_\tau(k-j-1)}} \dfrac{\omega_\tau(v * \sigma)}{\omega_\tau(\sigma)} \phi_\tau(v * \sigma) \nonumber \\ &= \sum_{\substack{v \in \Sigma_\tau(0) \\ v * \sigma \in \Sigma_\tau(k-j-1)}} \dfrac{{\omega}(\tau * v * \sigma)}{{\omega}(\tau * \sigma)} \phi(\tau * v * \sigma) \nonumber \\ &\substack{(**) \\ =} \sum_{\substack{v \in \Sigma_\tau(0) \\ v * \sigma \in \Sigma_\tau(k-j-1)}} (-1)^{j+1} \dfrac{{\omega}(v * \tau * \sigma)}{{\omega}(\tau * \sigma)} \phi(v * \tau * \sigma) \nonumber \\ &= \sum_{\substack{v \in \Sigma(0) \\ v * \tau * \sigma \in \Sigma(k)}} (-1)^{j+1} \dfrac{{\omega}(v * \tau * \sigma)}{{\omega}(\tau * \sigma)} \phi(v * \tau * \sigma) \nonumber \\ &= (-1)^{j+1} \delta \phi(\tau * \sigma) = (-1)^{j+1} \left( \delta \phi \right)_\tau (\sigma) \nonumber
\end{align}
where $(*)$ follows by Proposition \ref{codifferential} and $(**)$ holds since $\omega$ is symmetric, $\phi$ alternating and $\tau \in \Sigma(j)$.
As for $(2)$, by Proposition \ref{codifferential} together with the fact that $\omega$ is symmetric and $\phi$ antisymmetric it follows for the codifferential that
\begin{align}
\delta\phi(\tau) &= \dfrac{1}{{\omega}(\tau)} \sum_{\sigma \in \Sigma_\tau(0)} \omega(\sigma * \tau) \phi (\sigma * \tau) = \dfrac{1}{{\omega}(\tau)} \sum_{\sigma \in \Sigma_\tau(0)} (-1)^{j+1}\omega(\tau * \sigma) \phi (\tau * \sigma) \nonumber \\ &= \dfrac{1}{{\omega}(\tau)} \sum_{\sigma \in \Sigma_\tau(0)} (-1)^{k}\omega(\tau * \sigma) \phi (\tau * \sigma), \nonumber
\end{align} 
since $j=k-1$. Therefore, in terms of $\phi_\tau^0 \in L^{(0,p^*)}(X_\tau,E^*)$
\begin{align}
\delta \phi(\tau) = \dfrac{(-1)^k}{{\omega}(\tau)} \sum_{\sigma \in \Sigma_\tau(0)} \omega_\tau(\sigma) \phi_\tau (\sigma) = \dfrac{(-1)^k}{{\omega}(\tau)} \left( \sum_{\sigma \in \Sigma_\tau(0)} \omega_\tau(\sigma)\right) \phi_\tau^0. \nonumber
\end{align}
However, 
\begin{align}
\dfrac{\displaystyle \sum_{\sigma \in \Sigma_\tau(0)} \omega_\tau(\sigma)}{\omega(\tau)} = \dfrac{\displaystyle \sum_{\tau * \sigma \in \Sigma(j+1)} {\omega}(\tau * \sigma)} {\omega(\tau)} = \dfrac{(n-j)(j+1)!}{(j+1)!} = (n-j) = (n-k+1) \nonumber
\end{align}
by Proposition \ref{combinatorial}. The factor $(j+1)!$ in the denominator corresponds to the fact that we sum over one ordering as $\tau$ is fixed. Therefore, $$\delta \phi(\tau) = (-1)^k(n-k+1)\phi_\tau^0,$$ and once again by Proposition \ref{BS1.10} this gives
\begin{align}
\Vert \phi^0_\tau \Vert^{p^*}_{(0,p^*)} &= \dfrac{1}{\vert \Gamma_\tau \vert} \sum_{\sigma \in \Sigma_\tau(0)} \Vert \phi^0_\tau(\sigma)\Vert^{p^*}_{E^*} \omega_\tau(\sigma) = \dfrac{1}{\vert \Gamma_\tau \vert} \sum_{\sigma \in \Sigma_\tau(0)} \dfrac{\Vert \delta \phi(\tau) \Vert^{p^*}_{E^*}}{(n-k+1)^{p^*}} \omega_\tau(\sigma) \nonumber \\ &=  \dfrac{1}{\vert \Gamma_\tau \vert} \dfrac{\Vert \delta \phi(\tau) \Vert^{p^*}_{E^*}}{(n-k+1)^{p^*}} (n-k+1) \omega(\tau) = \dfrac{\omega(\tau) \Vert \delta \phi(\tau) \Vert^{p^*}_{E^*}}{(n-k+1)^{p^*-1}\vert \Gamma_\tau \vert}. \nonumber
\end{align} \end{proof}

\begin{proposition} \label{try1}
Let $\phi \in L^{(k,p)}(X,E)$ and $\tau \in \Sigma(k-1)$. Then,
\begin{enumerate}
\item if $k=1$, $$ d_\tau \phi_\tau (\sigma) = -(d \phi)_\tau (\sigma) + \phi (\sigma).$$
\item if $k > 1$, $$ d_\tau \phi_\tau (\sigma) =  (d \phi)_\tau (\sigma) + \sum_{i=0}^{k-1}(-1)^i\phi (\tau_i * \sigma).$$
\end{enumerate} 
\end{proposition}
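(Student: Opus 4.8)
The plan is to prove both identities by a single direct computation, comparing the two $1$-cochains $d_\tau\phi_\tau$ and $(d\phi)_\tau$ on $X_\tau$ pointwise. The first thing to notice is a dimension count: since $\phi$ is a $k$-cochain and $\tau\in\Sigma(k-1)$, the localization $\phi_\tau$ lands in $\mathcal{E}^{(0,p)}(X_\tau,E)$, so $d_\tau\phi_\tau\in\mathcal{E}^{(1,p)}(X_\tau,E)$; moreover $d\phi\in\mathcal{E}^{(k+1,p)}(X,E)$ localizes at $\tau$ — the relevant degree being $(k+1)-(k-1)-1=1$ — to a $1$-cochain $(d\phi)_\tau$ on $X_\tau$ as well. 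Hence it suffices to fix $\sigma=(w_0,w_1)\in\Sigma_\tau(1)$ and check the identity there. Note that $\tau*\sigma\in\Sigma(k+1)$ by definition of the link and of the join, and that by definition of localization $(d\phi)_\tau(\sigma)=d\phi(\tau*\sigma)$ and $\phi_\tau(v)=\phi(\tau*v)$ for every vertex $v$ of $X_\tau$, so that in particular $d_\tau\phi_\tau(\sigma)=\phi(\tau*(w_1))-\phi(\tau*(w_0))$.

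The core of the argument is then to expand $d\phi(\tau*\sigma)=\sum_{i=0}^{k+1}(-1)^i\phi\big((\tau*\sigma)_i\big)$ using the definition of the codifferential, and to split the sum according to whether the vertex deleted from $\tau*\sigma=(v_0,\dots,v_{k-1},w_0,w_1)$ lies in $\tau$ or in $\sigma$. Deleting $v_i$ for $i=0,\dots,k-1$ gives $(\tau*\sigma)_i=\tau_i*\sigma$, contributing $\sum_{i=0}^{k-1}(-1)^i\phi(\tau_i*\sigma)$; deleting $w_0$ or $w_1$ gives $(\tau*\sigma)_k=\tau*(w_1)$ and $(\tau*\sigma)_{k+1}=\tau*(w_0)$, and these last two terms recombine — up to the sign determined by the position of the two $\sigma$-vertices in $\tau*\sigma$ — into $\pm d_\tau\phi_\tau(\sigma)$. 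This produces an identity relating $(d\phi)_\tau(\sigma)$, the sum $\sum_{i=0}^{k-1}(-1)^i\phi(\tau_i*\sigma)$, and $d_\tau\phi_\tau(\sigma)$, which I would then solve for $d_\tau\phi_\tau(\sigma)$.

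The case split is forced by the structure of $\tau$: when $k=1$, $\tau$ is a single vertex, its only face $\tau_0$ is the empty simplex, so $\tau_0*\sigma=\sigma$ and the $\tau$-part of the sum collapses to the single term $\phi(\sigma)$, yielding (1); when $k>1$, the faces $\tau_i\in\Sigma(k-2)$ are genuine simplices of $X$, the sum $\sum_{i=0}^{k-1}(-1)^i\phi(\tau_i*\sigma)$ does not simplify, and one obtains (2). I expect the only real obstacle to be the sign bookkeeping — keeping the $(-1)^i$ of the coboundary consistent, correctly placing the $\sigma$-vertices in the last two slots of $\tau*\sigma$ (which is precisely what separates the $k=1$ case from the rest), and respecting the orientation conventions for the faces $\tau_i$ and $\sigma_j$; everything else is a routine rearrangement. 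Implicitly one assumes $k+1\le n$, since otherwise $\Sigma_\tau(1)=\emptyset$ and there is nothing to prove.
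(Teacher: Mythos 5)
Your plan is exactly the paper's argument: fix $\sigma=(w_0,w_1)\in\Sigma_\tau(1)$, note $d_\tau\phi_\tau(\sigma)=\phi(\tau*(w_1))-\phi(\tau*(w_0))$, expand $d\phi(\tau*\sigma)$ and split the sum according to whether the deleted vertex lies in $\tau$ or in $\sigma$; the paper's proof of the $k>1$ case is precisely the remark that the two $\sigma$-deletions are the last two terms of $d\phi(\tau*\sigma)$. One caveat on the sign you leave as ``$\pm$'': since the $\sigma$-vertices sit in positions $k$ and $k+1$ of $\tau*\sigma$, those two terms recombine into $(-1)^k d_\tau\phi_\tau(\sigma)$, so the identity your computation actually yields is
\begin{equation*}
(d\phi)_\tau(\sigma)\;=\;\sum_{i=0}^{k-1}(-1)^i\phi(\tau_i*\sigma)\;+\;(-1)^k\,d_\tau\phi_\tau(\sigma),
\end{equation*}
which for $k=1$ is exactly item (1), but for $k>1$ agrees with the displayed formula in item (2) only up to the factor $(-1)^k$ (test $k=2$: one gets $d_\tau\phi_\tau(\sigma)=(d\phi)_\tau(\sigma)-\phi(\tau_0*\sigma)+\phi(\tau_1*\sigma)$). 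So do not expect to ``obtain (2)'' literally; the sign discrepancy is in the paper's loosely stated item (2) and its one-line proof rather than in your method, and it is harmless for the sequel, which only uses the vertex case recorded in Proposition \ref{diffrelnew}.
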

\begin{proof} Suppose $k=1$. Then, for $\sigma = x * y \in \Sigma_\tau(1)$ 
\begin{align}
d_\tau \phi_\tau (\sigma) &= \phi_\tau (y) -\phi_\tau (x) = \phi(\tau * y) - \phi(\tau * x). \nonumber
\end{align} 
%assuming, without loss of generality that $\sigma$ is positively oriented and $[\sigma : y] = 1$, $[\sigma : x] = -1$.
On the other hand, as $[\tau * x * y : x * y] =1$
\begin{align}
d \phi (\tau * x * y) &= [\tau * x * y : x * y] \phi(x*y) + [\tau * x * y: \tau * y] \phi (\tau * y) + [\tau * x * y: \tau * x] \phi(\tau * x)  \nonumber \\ &= \phi(x*y) - \phi (\tau * y) +\phi(\tau * x), \nonumber
\end{align}
%If $\tau * x * y$ is positively oriented
%\begin{align}
%d \phi (\tau * x * y) =  \phi(x*y) - \phi (\tau * y) +\phi(\tau * x), \nonumber
%\end{align}
gives together with the expression for $d_\tau \phi_\tau$ 
$$ d_\tau \phi_\tau (\sigma) = -d \phi (\tau * x * y) + \phi (x * y) = -(d \phi)_\tau (\sigma) + \phi (\sigma).$$ %On the other hand, if $\tau * x * y$ is negatively oriented,
%\begin{align}
%d \phi (\tau * x * y) =  -\phi(x*y) + \phi (\tau * y) -\phi(\tau * x). \nonumber
%\end{align}
%giving respectively $$ d_\tau \phi_\tau (\sigma) = d \phi (\tau * x * y) + \phi (x * y) = (d \phi)_\tau (\sigma) + \phi (\sigma),$$ proving $(1)$.
Suppose $k>1$ and $\tau \in \Sigma(k-1)$. Then, as previously
\begin{align}
d_\tau \phi_\tau (\sigma) = \phi_\tau (y) - \phi_\tau (x), \nonumber
\end{align}
and the two rightmost terms are as previously the last two terms in $d \phi (\tau * \sigma)$. 
\end{proof}

\begin{proposition} \label{diffrelnew}
Let $\phi \in L^{(k,p)}(X,E)$ and $\tau \in \Sigma(0)$, then $$(d \phi)_\tau (\sigma) = - d_\tau \phi_\tau (\sigma) + \phi(\sigma).$$
\end{proposition}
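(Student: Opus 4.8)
The plan is to prove the identity directly and pointwise, by expanding the localization $(d\phi)_\tau(\sigma) = d\phi(\tau * \sigma)$ as an alternating sum over the codimension-one faces of $\tau * \sigma$ and separating out the unique face that does not contain the vertex $\tau$.

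First I would fix a simplex $\sigma = (v_0, \dots, v_k) \in \Sigma_\tau(k)$, so that $\tau * \sigma = (\tau, v_0, \dots, v_k) \in \Sigma(k+1)$, and by the definition of the localization map $(d\phi)_\tau(\sigma) = d\phi(\tau * \sigma)$. Applying the pointwise formula for $d$ gives $d\phi(\tau * \sigma) = \sum_{i=0}^{k+1}(-1)^i \phi\big((\tau * \sigma)_i\big)$. The term $i = 0$ deletes the vertex $\tau$, leaving $(\tau * \sigma)_0 = \sigma$ with sign $+1$, and thus contributes $\phi(\sigma)$. Each term with $i \ge 1$ deletes the vertex $v_{i-1}$, leaving $(\tau * \sigma)_i = \tau * \sigma_{i-1}$ with sign $(-1)^i$; re-indexing by $j = i-1$ these terms sum to $\sum_{j=0}^{k}(-1)^{j+1}\phi(\tau * \sigma_j) = -\sum_{j=0}^{k}(-1)^j \phi(\tau * \sigma_j)$.

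It then remains to recognize this last sum as $d_\tau \phi_\tau(\sigma)$: since $\phi_\tau(\eta) = \phi(\tau * \eta)$ for $\eta \in \Sigma_\tau(k-1)$, and the faces $\sigma_j$ of $\sigma$ computed inside $X_\tau$ are the same as those computed in $X$, the pointwise formula for $d_\tau$ gives $d_\tau\phi_\tau(\sigma) = \sum_{j=0}^{k}(-1)^j \phi_\tau(\sigma_j) = \sum_{j=0}^{k}(-1)^j \phi(\tau * \sigma_j)$. Combining the three observations yields $(d\phi)_\tau(\sigma) = \phi(\sigma) - d_\tau\phi_\tau(\sigma)$, which is the asserted identity. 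The only delicate point is the sign and index bookkeeping — tracking the position of $\tau$ inside the ordered simplex $\tau * \sigma$ and the resulting shift $(-1)^i = -(-1)^{i-1}$ when passing from faces of $\tau * \sigma$ to faces of $\sigma$; there is no analytic content here, so nothing beyond this combinatorics constitutes a genuine obstacle. (This is the vertex-localization analogue of Proposition~\ref{try1}(1), now valid for arbitrary $k$ rather than only $k = 1$, and it is cleaner to verify it from scratch than to deduce it from Proposition~\ref{try1}.)
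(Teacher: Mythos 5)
Your proof is correct and is essentially the paper's own argument: the paper also verifies the identity pointwise by expanding $d_\tau\phi_\tau(\sigma)=\sum_{i=0}^{k}(-1)^i\phi(\tau*\sigma_i)$ and matching it against the face expansion of $d\phi(\tau*\sigma)$, in which the term deleting $\tau$ contributes $\phi(\sigma)$ and the remaining terms carry the shifted sign $(-1)^{i+1}$. You merely run the same computation starting from $(d\phi)_\tau(\sigma)$ instead of $d_\tau\phi_\tau(\sigma)$, making explicit the index bookkeeping the paper leaves to the reader.
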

\begin{proof} Let $\sigma \in \Sigma(k)$. Then, similarly as in Proposition \ref{try1}
$$d_\tau \phi_\tau (\sigma) = \sum_{i=0}^{k}(-1)^i \phi_\tau (\sigma_i) = \sum_{i=0}^{k}(-1)^i \phi(\tau * \sigma_i) = - (d \phi)_\tau(\sigma) + \phi(\sigma).$$ \end{proof}

\begin{corollary} \label{help1}
Suppose $\phi \in L^{(k,p)}(X,E)$, $\tau \in \Sigma(0)$ and $k+1 \leq n$. Then, if $\phi \in \ker d$, $$ \Vert d_\tau \phi_\tau \Vert_{(k,p)} = \Vert \phi^\tau \Vert_{(k,p)}.$$
\end{corollary}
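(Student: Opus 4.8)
The plan is to read the statement off directly from the pointwise relation established in Proposition~\ref{diffrelnew}, so the proof is essentially a one-line deduction. Recall first that since $\tau \in \Sigma(0)$ and $k+1 \le n$, the restriction $\phi^\tau \in \mathcal{E}^{(k,p)}(X_\tau,E)$ is defined, and by the definition of the restriction map $\phi^\tau(\sigma) = \phi(\sigma)$ for every $\sigma \in \Sigma_\tau(k)$; this makes sense because every $k$-simplex of the link $X_\tau$ is in particular a $k$-simplex of $X$, so $\phi(\sigma)$ is defined on $\Sigma_\tau(k) \subseteq \Sigma(k)$.

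First I would invoke Proposition~\ref{diffrelnew}, which for $\phi \in L^{(k,p)}(X,E)$ and $\tau \in \Sigma(0)$ gives $(d\phi)_\tau(\sigma) = -d_\tau\phi_\tau(\sigma) + \phi(\sigma)$ for every $\sigma \in \Sigma_\tau(k)$. Next I would feed in the hypothesis $\phi \in \ker d$: then $d\phi = 0$ as an element of $\mathcal{E}^{(k+1,p)}(X,E)$, hence its localization $(d\phi)_\tau$ vanishes identically on $\Sigma_\tau(k)$. Therefore the identity collapses to $d_\tau\phi_\tau(\sigma) = \phi(\sigma) = \phi^\tau(\sigma)$ for every $\sigma \in \Sigma_\tau(k)$, i.e. $d_\tau\phi_\tau = \phi^\tau$ as functions on $\Sigma_\tau(k)$.

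Finally, since $d_\tau\phi_\tau$ and $\phi^\tau$ agree pointwise on all of $\Sigma_\tau(k)$, their $\Vert\cdot\Vert_{(k,p)}$ seminorms on $\mathcal{E}^{(k,p)}(X_\tau,E)$ — computed over any fixed set $\Sigma_\tau(k,\Gamma_\tau)$ of $\Gamma_\tau$-orbit representatives and with the localized weight $\omega_\tau$ — coincide term by term, yielding $\Vert d_\tau\phi_\tau \Vert_{(k,p)} = \Vert \phi^\tau \Vert_{(k,p)}$. There is no genuine obstacle here: the only points to keep track of are that $\phi^\tau$ is legitimately defined, which is precisely why the hypothesis $k+1 \le n$ is imposed, and that one uses $\phi \in \ker d$ (and not merely, say, $\phi \in \ker \delta$) so that the error term $(d\phi)_\tau$ in Proposition~\ref{diffrelnew} is exactly zero.
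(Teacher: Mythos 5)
Your proof is correct and follows exactly the paper's route: apply Proposition~\ref{diffrelnew} with $d\phi=0$ to get $d_\tau\phi_\tau(\sigma)=\phi(\sigma)=\phi^\tau(\sigma)$ pointwise on $\Sigma_\tau(k)$, and conclude that the $\Vert\cdot\Vert_{(k,p)}$ norms agree term by term.
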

\begin{proof} By Proposition \ref{diffrelnew} it follows since $\phi \in \ker d$ that
\begin{align}
\Vert d_\tau \phi_\tau \Vert_{(k,p)}^p &= \sum_{\sigma \in \Sigma_\tau (k, \Gamma_\tau)} \Vert d_\tau \phi_\tau (\sigma)\Vert_E^p \dfrac{\omega_\tau(\sigma)}{(k+1)! \vert \Gamma_{\tau \sigma}\vert} \nonumber \\ &= \sum_{\sigma \in \Sigma_\tau (k, \Gamma_\tau)} \Vert \phi (\sigma)\Vert_E^p \dfrac{\omega_\tau(\sigma)}{(k+1)! \vert \Gamma_{\tau \sigma}\vert} \nonumber \\ &= \Vert \phi^\tau \Vert_{(k,p)}^p. \nonumber
\end{align} \end{proof}

\begin{corollary} \label{try2}
Suppose $\phi \in L^{(k,p)}(X,E)$ and $1 < k+1 \leq n$. If $\phi \in \ker d$, then $$ \sum_{\tau \in \Sigma(0, \Gamma)} \Vert d_\tau \phi_\tau \Vert_{(k,p)}^p = \dfrac{n-k}{k+1} \sum_{\tau \in \Sigma(0, \Gamma)} \Vert \phi_\tau \Vert_{(k-1,p)}^p.$$
\end{corollary}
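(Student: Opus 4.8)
The plan is to obtain this identity by simply chaining two results already established in the excerpt, so the argument is essentially a one-line composition with no genuine obstacle.

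First I would invoke Corollary \ref{help1}: since $\phi \in \ker d$, for every vertex $\tau \in \Sigma(0)$ we have $\Vert d_\tau \phi_\tau \Vert_{(k,p)} = \Vert \phi^\tau \Vert_{(k,p)}$, using that $k+1 \leq n$ so that the restriction $\phi^\tau$ is defined. Raising to the $p$-th power and summing over a set of orbit representatives $\tau \in \Sigma(0,\Gamma)$ yields
\begin{align}
\sum_{\tau \in \Sigma(0, \Gamma)} \Vert d_\tau \phi_\tau \Vert_{(k,p)}^p = \sum_{\tau \in \Sigma(0, \Gamma)} \Vert \phi^\tau \Vert_{(k,p)}^p. \nonumber
\end{align}

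Next I would apply Corollary \ref{combinedsumming} to $f = \phi$, which is legitimate because $\phi \in L^{(k,p)}(X,E)$ and the hypothesis $1 < k+1 \leq n$ is exactly what that corollary requires. This gives
\begin{align}
\sum_{\tau \in \Sigma(0, \Gamma)} \Vert \phi^\tau \Vert_{(k,p)}^p = \dfrac{n-k}{k+1} \sum_{\tau \in \Sigma(0, \Gamma)} \Vert \phi_\tau \Vert_{(k-1,p)}^p. \nonumber
\end{align}
Combining the two displayed equalities proves the claim. The only points to verify are bookkeeping: that the index range $1 < k+1 \leq n$ simultaneously satisfies the hypotheses of both Corollary \ref{help1} (which needs $k+1 \leq n$) and Corollary \ref{combinedsumming} (which needs $1 < k+1 \leq n$), and that the sums over $\Sigma(0,\Gamma)$ are taken over the same choice of representatives in both invocations — both are immediate. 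Since everything reduces to composing previously proved statements, there is no hard step here; the work was already done in establishing Corollaries \ref{help1} and \ref{combinedsumming}.
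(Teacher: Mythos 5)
Your proof is correct and is exactly the paper's argument: the paper also derives Corollary \ref{try2} by combining Corollary \ref{help1} (pointwise equality $\Vert d_\tau\phi_\tau\Vert_{(k,p)}=\Vert\phi^\tau\Vert_{(k,p)}$ for $\phi\in\ker d$) with Corollary \ref{combinedsumming}. No gaps.
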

\begin{proof} Follows by Corollary \ref{help1} and Proposition \ref{combinedsumming}. \end{proof}

\begin{corollary} \label{BSspecial}
Suppose $\phi \in L^{(1,p)}(X,E)$. If $\phi \in \ker d$, then
$$- (n-1) \Vert \phi \Vert_{(1,p)}^p = \sum_{\tau \in \Sigma(0, \Gamma)} \left( \Vert d_\tau \phi_\tau \Vert_{(1,p)}^p - (n-1) \Vert \phi_\tau \Vert^p_{(0,p)} \right).$$
\end{corollary}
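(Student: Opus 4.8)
This is in essence a bookkeeping corollary: every term appearing on the right-hand side has already been tied, through the local-to-global identities of this section, to the single global quantity $\Vert\phi\Vert_{(1,p)}^p$, so the plan is to rewrite both sums in terms of $\Vert\phi\Vert_{(1,p)}^p$ and then collect. I would first treat $\sum_{\tau\in\Sigma(0,\Gamma)}\Vert d_\tau\phi_\tau\Vert_{(1,p)}^p$. Since $\phi\in\ker d$, Corollary \ref{help1} applied with $k=1$ (this is where the hypothesis $k+1\le n$ enters) gives $\Vert d_\tau\phi_\tau\Vert_{(1,p)}=\Vert\phi^\tau\Vert_{(1,p)}$ for every vertex $\tau$; summing over a set of orbit representatives and invoking Proposition \ref{oppenheim} in the case $k=1$ then yields $\sum_{\tau}\Vert d_\tau\phi_\tau\Vert_{(1,p)}^p=\sum_{\tau}\Vert\phi^\tau\Vert_{(1,p)}^p=(n-1)\Vert\phi\Vert_{(1,p)}^p$. (Equivalently, one could quote Corollary \ref{try2} directly and then feed in Proposition \ref{summing}; the two routes produce the same relation.)

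Next I would handle $\sum_{\tau\in\Sigma(0,\Gamma)}\Vert\phi_\tau\Vert_{(0,p)}^p$, which is exactly Proposition \ref{summing} in the case $j=0$, $k=1$: it reads $2!\,\Vert\phi\Vert_{(1,p)}^p=1!\,\sum_{\tau}\Vert\phi_\tau\Vert_{(0,p)}^p$, i.e. $\sum_{\tau}\Vert\phi_\tau\Vert_{(0,p)}^p=2\Vert\phi\Vert_{(1,p)}^p$. Substituting the two expressions into the right-hand side of the claim gives $(n-1)\Vert\phi\Vert_{(1,p)}^p-(n-1)\cdot 2\Vert\phi\Vert_{(1,p)}^p=-(n-1)\Vert\phi\Vert_{(1,p)}^p$, which is the asserted identity.

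I do not expect a genuine obstacle here; the only points that require care are (i) keeping the factorial normalisations in Proposition \ref{summing} straight for the particular values $j=0$, $k=1$, and (ii) the degenerate dimension $n=1$, for which the hypothesis $k+1\le n$ needed in Corollary \ref{help1} and Proposition \ref{oppenheim} fails. In that case, however, $X_\tau$ is $0$-dimensional, $d_\tau\phi_\tau$ is a cochain on the empty set of $1$-simplices of $X_\tau$, and the coefficient $n-1$ vanishes, so both sides of the identity are $0$ and the statement is trivially true; hence one may assume $n\ge 2$ throughout.
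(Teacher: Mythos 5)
Your proof is correct and follows essentially the same route as the paper: the paper rewrites $\sum_\tau \Vert d_\tau\phi_\tau\Vert_{(1,p)}^p$ via Corollary \ref{try2} (which is itself just Corollary \ref{help1} combined with Propositions \ref{oppenheim} and \ref{summing}) and then applies Proposition \ref{summing} with $j=0$, exactly the combination you identify as equivalent to your help1--oppenheim computation. Your remark on the degenerate case $n=1$ is a harmless extra precaution not addressed in the paper.
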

\begin{proof} By a direct computation,
\begin{align}
&\sum_{\tau \in \Sigma(0, \Gamma)} \Vert d_\tau \phi_\tau \Vert_{(1,p)}^p - (n-1) \Vert \phi_\tau \Vert_{(0,p)}^p \substack{(*) \\ =} \sum_{\tau \in \Sigma(0, \Gamma)} \dfrac{(n-1)}{2} \Vert \phi_\tau\Vert_{(0,p)}^p - (n-1) \Vert \phi_\tau\Vert_{(0,p)}^p \nonumber \\ &= -\dfrac{(n-1)}{2} \sum_{\tau \in \Sigma(0, \Gamma)} \Vert \phi_\tau \Vert_{(0,p)}^p \substack{(**) \\ =} - \dfrac{(n-1)}{2} \dfrac{2!}{(1-0)!} \Vert \phi \Vert_{(1,p)}^p = -(n-1)\Vert \phi \Vert_{(1,p)}^p, \nonumber
\end{align}
where in $(*)$ we used Corollary \ref{try2} and in $(**)$ Proposition \ref{summing}.
\end{proof}

\begin{corollary} \label{try3}
Suppose $\phi \in L^{(1,p)}(X,E)$. If $\phi \in \ker d$, then
$$\sum_{\tau \in \Sigma(0, \Gamma)} \Vert d_\tau \phi_\tau \Vert_{(1,p)}^p = (n-1) \Vert \phi \Vert_{(1,p)}^p.$$
\end{corollary}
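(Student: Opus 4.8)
The plan is to obtain this identity with essentially no new computation, by eliminating the term $\sum_\tau \Vert \phi_\tau\Vert_{(0,p)}^p$ from Corollary~\ref{BSspecial} using Proposition~\ref{summing}. All the genuine content — the cocycle hypothesis, the local-to-global summation, and the combinatorial weight identities of Propositions~\ref{combinatorial} and \ref{switchingsums} — has already been packaged into Corollaries~\ref{try2} and \ref{BSspecial}, so what remains is bookkeeping.

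First I would record the consequence of Proposition~\ref{summing} in the case $k=1$, $j=0$: since $\phi\in L^{(1,p)}(X,E)$, that proposition gives $2!\,\Vert\phi\Vert_{(1,p)}^p = 1!\sum_{\tau\in\Sigma(0,\Gamma)}\Vert\phi_\tau\Vert_{(0,p)}^p$, i.e. $\sum_{\tau\in\Sigma(0,\Gamma)}\Vert\phi_\tau\Vert_{(0,p)}^p = 2\Vert\phi\Vert_{(1,p)}^p$. Next I would expand the right-hand side of Corollary~\ref{BSspecial} (which applies because $\phi\in\ker d$) as $\sum_{\tau\in\Sigma(0,\Gamma)}\Vert d_\tau\phi_\tau\Vert_{(1,p)}^p - (n-1)\sum_{\tau\in\Sigma(0,\Gamma)}\Vert\phi_\tau\Vert_{(0,p)}^p$, substitute the identity just obtained into the last term, and solve the resulting equation $-(n-1)\Vert\phi\Vert_{(1,p)}^p = \sum_\tau\Vert d_\tau\phi_\tau\Vert_{(1,p)}^p - 2(n-1)\Vert\phi\Vert_{(1,p)}^p$ for $\sum_\tau\Vert d_\tau\phi_\tau\Vert_{(1,p)}^p$, which is exactly the claim.

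Alternatively, one can bypass Corollary~\ref{BSspecial} and argue directly from Corollary~\ref{try2} with $k=1$, which reads $\sum_{\tau\in\Sigma(0,\Gamma)}\Vert d_\tau\phi_\tau\Vert_{(1,p)}^p = \tfrac{n-1}{2}\sum_{\tau\in\Sigma(0,\Gamma)}\Vert\phi_\tau\Vert_{(0,p)}^p$, and then again invoke the identity $\sum_\tau\Vert\phi_\tau\Vert_{(0,p)}^p = 2\Vert\phi\Vert_{(1,p)}^p$ from Proposition~\ref{summing}. Either route is a two-line deduction.

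There is no real obstacle here; the only point requiring care is keeping the factorial normalizations straight — in the relevant instance $(k+1)! = 2$ and $(k-j)! = 1$, so the factor $2$ relating $\Vert\phi\Vert_{(1,p)}^p$ to $\sum_\tau\Vert\phi_\tau\Vert_{(0,p)}^p$ must match the $\tfrac{n-1}{2}$ appearing in Corollary~\ref{try2} (equivalently the $\tfrac{n-1}{2}$ produced in the proof of Corollary~\ref{BSspecial}) so that the coefficients cancel to leave precisely $(n-1)$.
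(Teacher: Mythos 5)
Your first route is exactly the paper's proof: Corollary \ref{BSspecial} combined with Proposition \ref{summing} in the case $k=1$, $j=0$ (giving $\sum_{\tau}\Vert\phi_\tau\Vert_{(0,p)}^p = 2\Vert\phi\Vert_{(1,p)}^p$), and your coefficient bookkeeping is correct. The alternative via Corollary \ref{try2} is equally valid but amounts to the same facts, since \ref{BSspecial} was itself derived from \ref{try2} and \ref{summing}.
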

\begin{proof} Follows directly from Corollary \ref{BSspecial} using Proposition \ref{summing} once more. \end{proof}

\begin{definition} \label{Q}
Let $\phi \in L^{(1,p)}(X,E)$. For $\tau \in \Sigma(0)$ define a $p$-form on $L^{(0,p)}(X_\tau, E)$ by
$$Q_\tau (\phi_\tau) = \Vert d_\tau \phi_\tau \Vert^p_{(1,p)} - \dfrac{(n-1)}{2} \Vert \phi_\tau \Vert_{(0,p)}^p.$$ Similarly, we define a $p^*$-form on $L^{(0,p^*)}(X_\tau, E^*)$.
\end{definition}

\begin{corollary} \label{propQ}
Suppose $\phi \in L^{(1,p)}(X,E)$. If $\phi \in \ker d$ then
$$ \sum_{\tau \in \Sigma(0, \Gamma)} Q_\tau (\phi_\tau) = 0.$$
\end{corollary}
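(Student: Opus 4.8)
The plan is to simply unwind the definition of $Q_\tau$ and substitute the two global identities already available for cochains annihilated by $d$. Writing out the definition and using linearity of the sum over $\Sigma(0,\Gamma)$, one has
$$\sum_{\tau \in \Sigma(0, \Gamma)} Q_\tau (\phi_\tau) = \sum_{\tau \in \Sigma(0, \Gamma)} \Vert d_\tau \phi_\tau \Vert^p_{(1,p)} - \dfrac{n-1}{2} \sum_{\tau \in \Sigma(0, \Gamma)} \Vert \phi_\tau \Vert^p_{(0,p)},$$
so the whole claim reduces to evaluating these two sums.

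For the first sum I would invoke Corollary \ref{try3}, which is exactly where the hypothesis $\phi \in \ker d$ is used: it gives $\sum_{\tau \in \Sigma(0,\Gamma)} \Vert d_\tau \phi_\tau \Vert^p_{(1,p)} = (n-1) \Vert \phi \Vert^p_{(1,p)}$. For the second sum I would apply Proposition \ref{summing} in the case $k=1$, $j=0$, which reads $2!\,\Vert \phi \Vert^p_{(1,p)} = (1-0)!\sum_{\tau \in \Sigma(0,\Gamma)} \Vert \phi_\tau \Vert^p_{(0,p)}$, hence $\sum_{\tau \in \Sigma(0,\Gamma)} \Vert \phi_\tau \Vert^p_{(0,p)} = 2\Vert \phi \Vert^p_{(1,p)}$. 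Substituting both back yields
$$\sum_{\tau \in \Sigma(0, \Gamma)} Q_\tau (\phi_\tau) = (n-1)\Vert \phi \Vert^p_{(1,p)} - \dfrac{n-1}{2}\cdot 2\,\Vert \phi \Vert^p_{(1,p)} = 0,$$
which is the assertion.

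There is no genuine obstacle here; all the work has already been done in building up to Corollary \ref{try3} (via Proposition \ref{diffrelnew}, Corollary \ref{help1}, Corollary \ref{combinedsumming}, and Corollary \ref{BSspecial}). The only thing one must be a little careful about is getting the combinatorial factors $(k+1)!/(k-j)!$ right when specializing Proposition \ref{summing} to $k=1$, $j=0$. As an alternative route one could avoid citing Corollary \ref{try3} altogether: starting from Corollary \ref{BSspecial}, add $\tfrac{n-1}{2}\sum_{\tau}\Vert \phi_\tau\Vert^p_{(0,p)}$ to both sides to recognize $\sum_\tau Q_\tau(\phi_\tau)$ on the right, and then use Proposition \ref{summing} once to rewrite $\sum_\tau \Vert\phi_\tau\Vert^p_{(0,p)} = 2\Vert\phi\Vert^p_{(1,p)}$, again landing on $0$.
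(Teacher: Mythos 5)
Your proof is correct and follows exactly the paper's route: the paper likewise deduces the statement immediately from Corollary \ref{try3} (where $\phi \in \ker d$ is used) together with Proposition \ref{summing} applied with $k=1$, $j=0$. Your combinatorial bookkeeping ($2!\,\Vert\phi\Vert^p_{(1,p)} = \sum_\tau \Vert\phi_\tau\Vert^p_{(0,p)}$) matches the paper's, so nothing further is needed.
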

\begin{proof} Follows immediately by Corollary \ref{try3} and Proposition \ref{summing}. \end{proof}

%\begin{corollary} \label{conjectured}
%Suppose $f \in L^{(1,p)}(X,E)$ and $n=2$. If $f \in \ker d$ and $Q_\tau(f_\tau) \geq 0$ for all $\tau \in \Sigma(0, \Gamma)$, then
%$$ \Vert d_\tau f_\tau \Vert_{(1,p)}^p = \dfrac{1}{2} \Vert f_\tau \Vert^p_{(0,p)}.$$ Similarly for $f \in L^{(1,p^*)}(X,E^*)$.
%\end{corollary}
%\begin{proof} By Corollary \ref{propQ} it follows if $Q_\tau(f_\tau) \geq 0$ for all $\tau \in \Sigma(0, \Gamma)$ that in fact $Q_\tau(f_\tau)=0$ for all $\tau \in \Sigma(0, \Gamma)$, that is $ \Vert d_\tau f_\tau \Vert_{(1,p)}^p = \dfrac{1}{2} \Vert f_\tau \Vert^p_{(0,p)}.$ Similarly for the dual statement. \end{proof}

\section{Poincar\'e inequalities on finite weighted graphs} \label{PI}
In this section we recall some basic facts concerning Poincar\'e inequalities on finite weighted graphs necessary for the spectral method. For details we refer to \cite{gn, Nowak, ny}.
\begin{proposition} \cite{Nowak} \label{poink}
Suppose $\dim X = 2$. Then the link $X_\tau$ of every vertex of $X$ is a finite graph. Hence, for any $p \geq 1$ the $p$-Poincar\'e inequality \begin{align} \sum_{\sigma \in \Sigma(0, \Gamma_\tau)} \Vert f_\tau(\sigma) - Mf_\tau(\sigma)\Vert_E^p &\dfrac{{\omega_\tau}(\sigma)}{\vert \Gamma_{\tau \sigma} \vert} \nonumber \\ &\leq \kappa_p^p \sum_{\eta \in \Sigma_\tau(1, \Gamma_\tau)} \dfrac{1}{2} \Vert f_\tau(\eta_0) - f_\tau(\eta_1) \Vert_E^p \dfrac{\omega_\tau(\eta)}{\vert \Gamma_{\tau \eta} \vert}  \nonumber \end{align} holds for some $\kappa_p>0$ and all $f \colon \Sigma_\tau(0) \rightarrow E$. Similarly for $f \colon \Sigma_\tau(0) \rightarrow E^*$. 
\end{proposition}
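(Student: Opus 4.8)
The plan is to reduce the $E$-valued Poincar\'e inequality to its scalar counterpart on the finite weighted graph $X_\tau$, the crucial observation being that it is the finite-dimensionality of $X_\tau$, and not of $E$, that makes the argument work. First I would note that since $\dim X = 2$ and $\tau \in \Sigma(0)$, the link $X_\tau$ is a $(2-0-1) = 1$-dimensional locally finite subcomplex, hence a finite graph; moreover the standing assumption $\omega(\sigma) \geq 1$ for every simplex of $X$ forces $\omega_\tau(\sigma) = \omega(\tau * \sigma) \geq 1$ for every vertex $\sigma$ of $X_\tau$, so $X_\tau$ has no isolated vertices and the weighted average $M f_\tau(\sigma)$ is well defined.

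Next I would treat the scalar case $E = \mathbb{R}$. Write $V = \Sigma_\tau(0)$ and $W = \Sigma_\tau(1)$, both finite, and consider the linear operators $D \colon \mathbb{R}^V \to \mathbb{R}^W$, $Df(\eta) = f(\eta_0) - f(\eta_1)$, and $L \colon \mathbb{R}^V \to \mathbb{R}^V$, $Lf(\sigma) = f(\sigma) - Mf(\sigma)$, given by fixed real matrices. The kernel of $D$ consists of the functions that are constant on each connected component of $X_\tau$, and every such function is harmonic, i.e. lies in $\ker L$ (the usual maximum-principle argument for the weighted graph Laplacian, using that there are no isolated vertices). Hence $\ker D \subseteq \ker L$, so $L$ factors as $L = C \circ D$ for some linear map $C$; concretely there are scalars $c_{\sigma,\eta}$, depending only on the finite weighted graph $X_\tau$, with $f(\sigma) - Mf(\sigma) = \sum_{\eta \in W} c_{\sigma,\eta}\bigl(f(\eta_0) - f(\eta_1)\bigr)$ for all $f \colon V \to \mathbb{R}$.

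Since this is an identity between $\mathbb{R}$-linear combinations of the values of $f$ — equivalently, an identity of real matrices — it continues to hold verbatim when $f$ takes values in $E$ or in $E^*$. Applying the triangle inequality together with the elementary bound $\Vert \sum_{i=1}^m a_i x_i \Vert^p \leq m^{p-1}\bigl(\max_i |a_i|\bigr)^p \sum_{i=1}^m \Vert x_i \Vert^p$ to the finitely many terms, then multiplying by the positive weights $\omega_\tau(\sigma)/\vert \Gamma_{\tau\sigma}\vert$, summing over $\sigma \in \Sigma(0,\Gamma_\tau)$, and collecting the resulting finitely many terms $\Vert f_\tau(\eta_0) - f_\tau(\eta_1)\Vert^p$ against the right-hand side — here using that $X_\tau$ and $\Gamma_\tau$ are finite, that $\omega_\tau$ is positive and $\Gamma_\tau$-invariant, and that $\pi_\tau$ is isometric so the orbit sums behave correctly — yields the inequality with a constant $\kappa_p$ depending only on $X_\tau$, its weights and the $\Gamma_\tau$-action, and in particular neither on $f$ nor on $E$. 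Replacing $E$ by $E^*$ throughout gives the final assertion.

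I expect the only genuine obstacle to be the infinite-dimensionality of $E$, which precludes a direct compactness argument on the space of $E$-valued cochains; this is exactly what the scalar factorization $L = C \circ D$ circumvents. The one point that must not be overlooked in carrying it out is the verification that $\ker L$ really is the space of locally constant functions — which is where the hypothesis $\omega \geq 1$ (no isolated vertices in $X_\tau$) enters — whereas connectedness of $X_\tau$ is not needed for this proposition.
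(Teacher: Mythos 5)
The paper does not actually prove this proposition --- it is quoted from \cite{Nowak}, the only added content being the observation that the link of a vertex of a $2$-dimensional complex is a finite graph --- so your factorization argument is supplying a proof rather than paralleling one. Its core is sound: if $L = C\circ D$ holds as an identity of scalar matrices acting on functions $\Sigma_\tau(0)\to\mathbb{R}$, it holds verbatim for $E$-valued and $E^*$-valued functions, and on a finite graph the crude bound $\Vert\sum_{i=1}^m a_i x_i\Vert^p\le m^{p-1}\bigl(\max_i\vert a_i\vert\bigr)^p\sum_{i=1}^m\Vert x_i\Vert^p$ then produces a constant $\kappa_p$ depending only on $X_\tau$, its weights and the $\Gamma_\tau$-action --- in particular independent of $E$, which is more than the statement requires.

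The genuine gap is your identification of $\ker L$ and the resulting claim that connectedness of $X_\tau$ is dispensable. In this paper $M$ is not the adjacent-vertex mean of the introduction but the localized average of Section \ref{secloc}: $Mf_\tau$ is the constant $\bigl(\sum_{\sigma}\omega_\tau(\sigma)\bigr)^{-1}\sum_{\sigma}\omega_\tau(\sigma)f_\tau(\sigma)$, i.e.\ the weighted projection onto constant functions; this is exactly what the Pythagorean identity used in Proposition \ref{FIX1} and the relation $\kappa_2(X_\tau,L^2)=\lambda_1^{-1/2}$ of Proposition \ref{hilb1} presuppose. With that $M$, $\ker L$ is the space of constant functions, not of harmonic ones, so $\ker D\subseteq\ker L$ holds precisely when $X_\tau$ is connected; for a disconnected link the inequality itself fails (take $f$ constant on each component with distinct values: the right-hand side vanishes while the left does not). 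So your maximum-principle step establishes the inequality for a different $M$ than the one the paper goes on to use, and the closing assertion that connectedness is not needed is false in the sense relevant here. The repair is immediate: under connectedness of the link (which the paper assumes wherever the proposition is applied, e.g.\ in Proposition \ref{FIX1} and Theorem \ref{theorem}), $\ker D$ is again the constants, the factorization $L=C\circ D$ holds, and the remainder of your argument goes through unchanged; the hypothesis $\omega\ge1$ then serves only to make the weights positive, not to exclude isolated vertices.
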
 \qed

The infimum of the above constants $\kappa_p$ is known as the Poincar\'e constant of the link $X_\tau$, and denoted by $\kappa_p(X_\tau, E)$. In terms of the norms introduced previously:
%\begin{definition}
%Let $$\kappa_p(X_\tau, E) = \inf \kappa_p,$$ such the Poincare inequality in Proposition \ref{poink} holds. We call $\kappa_p(X_\tau, E)$ the Poincar\'e constant of the link $X_\tau$.
%\end{definition}

\begin{corollary}
Let $X$ be two dimensional. Then, for all $f \in L^{(1,p)}(X,E)$ it holds that $$\Vert f_\tau - Mf_\tau \Vert_{(0,p)} \leq \kappa_{p}(X_\tau,E) \Vert {d}_\tau f_\tau \Vert_{(1,p)},$$ for some $\kappa_{p}(X_\tau,E)$. Similarly for $f \in L^{(1,p^*)}(X,E^*)$, $$\Vert f_\tau - \overline{M}f_\tau \Vert_{(0,p^*)} \leq \kappa_{p^*}(X_\tau,E^*) \Vert \overline{d}_\tau f_\tau \Vert_{(1,p^*)},$$ for some $\kappa_{p^*}(X_\tau,E^*)$.
\end{corollary}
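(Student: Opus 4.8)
The plan is to deduce the statement directly from Proposition \ref{poink} by rewriting both sides of the pointwise $p$-Poincar\'e inequality in terms of the norms $\Vert\cdot\Vert_{(0,p)}$ and $\Vert\cdot\Vert_{(1,p)}$ on the link $X_\tau$. First I would fix a vertex $\tau\in\Sigma(0)$ and an $f\in L^{(1,p)}(X,E)$, and note that the localization $f_\tau$ lies in $L^{(0,p)}(X_\tau,E)$ (this is the content of the localization discussion in Section \ref{secloc}, since $k=1$, $j=0$ gives $k-j-1=0$). Applying the averaging map $M$ of the preceding definition, $Mf_\tau=f_\tau^0$ is the constant function with value the $\omega_\tau$-weighted mean of $f_\tau$ over $\Sigma_\tau(0)$, which is exactly the $Mf_\tau(\sigma)$ appearing in Proposition \ref{poink}; hence the left-hand side of that inequality is, up to the factor $1/\vert\Gamma_\tau\vert$ absorbed uniformly, the quantity $\Vert f_\tau - Mf_\tau\Vert_{(0,p)}^p$.

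Next I would identify the right-hand side of the Poincar\'e inequality with $\Vert d_\tau f_\tau\Vert_{(1,p)}^p$. Since $\eta=(\eta_0,\eta_1)\in\Sigma_\tau(1)$ and $d_\tau f_\tau(\eta) = f_\tau(\eta_1)-f_\tau(\eta_0)$ by the definition of the codifferential, we have $\Vert d_\tau f_\tau(\eta)\Vert_E^p = \Vert f_\tau(\eta_0)-f_\tau(\eta_1)\Vert_E^p$, and the factor $\tfrac12$ together with the sum over $\Sigma_\tau(1,\Gamma_\tau)$ and the weights $\omega_\tau(\eta)/\vert\Gamma_{\tau\eta}\vert$ is precisely the definition of $\Vert d_\tau f_\tau\Vert_{(1,p)}^p$ (recall $(k+1)!=2!$ for $k=1$, accounting for the $\tfrac12$). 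Thus Proposition \ref{poink} reads, after multiplying through by $1/\vert\Gamma_\tau\vert$ and taking $p$-th roots, as $\Vert f_\tau - Mf_\tau\Vert_{(0,p)}\le \kappa_p \Vert d_\tau f_\tau\Vert_{(1,p)}$; taking the infimum over admissible constants $\kappa_p$ yields the claim with $\kappa_p(X_\tau,E)$ defined as that infimum. The dual statement for $f\in L^{(1,p^*)}(X,E^*)$ follows verbatim, replacing $M$, $d$, $E$ by $\overline{M}$, $\overline{d}$, $E^*$ and using the second (dual) half of Proposition \ref{poink}.

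The only mild subtlety — and the step I would be most careful about — is the bookkeeping of the normalization factors: Proposition \ref{poink} is stated with weights $\omega_\tau(\sigma)/\vert\Gamma_{\tau\sigma}\vert$ but without the $1/((k+1)!\vert\Gamma_\tau\vert)$ prefactor of the norm $\Vert\cdot\Vert_{(k,p)}$ on $X_\tau$, whereas Proposition \ref{BS1.10} shows $\Vert f_\tau\Vert_{(k,p)}^p = \tfrac{1}{(k+1)!\vert\Gamma_\tau\vert}\sum_{\sigma\in\Sigma_\tau(k)}\Vert f_\tau(\sigma)\Vert_E^p\,\omega(\tau*\sigma)$. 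One checks that passing from the sum over orbit representatives $\Sigma_\tau(k,\Gamma_\tau)$ weighted by $1/\vert\Gamma_{\tau\sigma}\vert$ to the full sum over $\Sigma_\tau(k)$ weighted by $1/\vert\Gamma_\tau\vert$ is exactly the identity used in the proof of Proposition \ref{BS1.10} (orbit-stabilizer), so both sides of the Poincar\'e inequality acquire the \emph{same} overall constant $1/\vert\Gamma_\tau\vert$, and the $2!$ on the $1$-cochain side matches the $\tfrac12$ already present. Since this common factor cancels, no change to $\kappa_p$ is needed, and the corollary follows. No genuine obstacle arises; the work is purely a translation of notation.
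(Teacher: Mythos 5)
Your proposal is correct and follows exactly the route the paper intends: the paper states this corollary with no separate proof, treating it as an immediate restatement of Proposition \ref{poink} in the link norms, which is what you carry out. In fact the bookkeeping is even simpler than you fear, since the sums in Proposition \ref{poink} over $\Sigma_\tau(0,\Gamma_\tau)$ and $\Sigma_\tau(1,\Gamma_\tau)$ with weights $\omega_\tau(\sigma)/\vert\Gamma_{\tau\sigma}\vert$ and $\tfrac12\,\omega_\tau(\eta)/\vert\Gamma_{\tau\eta}\vert$ are already exactly $\Vert f_\tau-Mf_\tau\Vert_{(0,p)}^p$ and $\Vert d_\tau f_\tau\Vert_{(1,p)}^p$ as defined (no $1/\vert\Gamma_\tau\vert$ prefactor enters unless one passes to the equivalent expression of Proposition \ref{BS1.10}, where, as you note, it appears on both sides and cancels).
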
 \qed

Sometimes it is useful to know how Poincar\'e constants change under isomorphisms. The following is immediate:
\begin{proposition} \label{BI}
Let $T: E \rightarrow F$ be a Banach space isomorphism. If $$ \Vert x \Vert_E \leq \Vert T(x) \Vert_F \leq C \Vert x \Vert_E,$$ then $\kappa_p(X_\tau, E) \leq C \kappa_p(X_\tau, F)$.
\end{proposition}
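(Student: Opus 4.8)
The plan is to transport an arbitrary test function on the link across $T$ and then apply the Poincar\'e inequality on $F$. Fix $\tau \in \Sigma(0)$ and let $g\colon \Sigma_\tau(0) \to E$ be any function to which Proposition \ref{poink} applies; put $Tg\colon \Sigma_\tau(0)\to F$, $\sigma \mapsto T(g(\sigma))$. Since $T$ is linear and the mean value $M$ is a fixed weighted average of the values of its argument, $T$ commutes with $M$, so $M(Tg) = T(Mg)$ and hence $Tg(\sigma) - M(Tg)(\sigma) = T\bigl(g(\sigma) - Mg(\sigma)\bigr)$ for every $\sigma$; likewise $Tg(\eta_0) - Tg(\eta_1) = T\bigl(g(\eta_0) - g(\eta_1)\bigr)$ for every edge $\eta$ of $X_\tau$. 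Note that $Tg$ is a legitimate test function for Proposition \ref{poink} over $F$ since that inequality is stated for \emph{all} maps into the target Banach space, so the fact that $T$ carries no equivariance is irrelevant.

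Now I would chain three estimates. First, the hypothesis $\Vert x\Vert_E \le \Vert T(x)\Vert_F$, applied term by term, bounds the left-hand side of the $p$-Poincar\'e inequality for $E$ (with respect to $g$) by the same expression with $g(\sigma)-Mg(\sigma)$ replaced by $Tg(\sigma)-M(Tg)(\sigma)$ and $\Vert\cdot\Vert_E$ by $\Vert\cdot\Vert_F$. Second, Proposition \ref{poink} for $F$ applied to $Tg$ bounds this by $\kappa_p(X_\tau,F)^p$ (or $(\kappa_p(X_\tau,F)+\varepsilon)^p$, if one prefers not to assume the extremal constant is itself admissible) times $\sum_{\eta}\tfrac12\Vert Tg(\eta_0)-Tg(\eta_1)\Vert_F^p\,\omega_\tau(\eta)/\vert\Gamma_{\tau\eta}\vert$. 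Third, the hypothesis $\Vert T(x)\Vert_F \le C\Vert x\Vert_E$ applied to each difference $T(g(\eta_0)-g(\eta_1))$ extracts a factor $C^p$ and restores the right-hand side of the $p$-Poincar\'e inequality for $E$. Altogether the $p$-Poincar\'e inequality for $E$ holds with constant $C\,\kappa_p(X_\tau,F)$ (resp. $C(\kappa_p(X_\tau,F)+\varepsilon)$, then let $\varepsilon\to 0$), and taking the infimum over admissible constants gives $\kappa_p(X_\tau,E)\le C\,\kappa_p(X_\tau,F)$. Running the identical argument with $\overline{M}$ in place of $M$ and $E^*,F^*$ in place of $E,F$ yields the dual statement.

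There is essentially no obstacle here: the argument is a three-line substitution, and the only point deserving care is that $T$ commutes with the averaging operator $M$ (immediate from linearity). If one wishes instead to work through the norm-form corollary to Proposition \ref{poink}, the same observation applies to the codifferential $d_\tau$, which is defined pointwise by signed sums and therefore also commutes with $T$, giving $d_\tau(Tg_\tau)=T(d_\tau g_\tau)$ and hence $\Vert g_\tau - Mg_\tau\Vert_{(0,p)} \le C\,\kappa_p(X_\tau,F)\,\Vert d_\tau g_\tau\Vert_{(1,p)}$, from which the conclusion follows in the same way.
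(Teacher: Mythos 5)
Your argument is correct and is precisely the routine transport argument the paper treats as immediate (it states Proposition \ref{BI} without proof): conjugating a test function by $T$, using linearity to commute $T$ with the averaging operator, and applying the two norm bounds on the two sides of the $p$-Poincar\'e inequality for $F$ yields the constant $C\,\kappa_p(X_\tau,F)$ for $E$. The $\varepsilon$-caveat about admissibility of the infimal constant is a nice touch but not needed, since on a finite link the infimum is attained; in any case your limiting argument handles it.
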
 \qed

For $1 < p< \infty$ we denote by ${L}^p$ the Banach space ${L}^p(\mu)$ of $p$-integrable functions on a standard Borel space $(Y, \mathcal{B})$ with $\sigma$-finite measure $\mu$. As such, any separable infinite-dimensional Hilbert space $\mathcal{H}$ is isometrically isomorphic to ${L}^2$. In particular, we have the following relation between the Poincar\'e constant and spectral gap:

\begin{proposition} \label{hilb1}
\cite{Nowak} Let $\lambda_1$ be the smallest positive eigenvalue of the graph Laplacian $\triangle_+ = (\delta d)_\tau$, defined by $$\triangle_+f(v) = f(v) - \dfrac{1}{\omega_\tau(v)} \sum_{u \in L_v}f(u),$$ where $L_v$ denotes the link of $v$ in $X_\tau$, over the space $C^{(0,p)}(X_\tau, \mathbb{R})$ of real-valued functions on the vertices. For $L^2$ when $X$ is $2$-dimensional, $\kappa_p(X_\tau, L^2) = \lambda_1^{-1/2}$ and more generally $\kappa_p(X_\tau, L^p) = \kappa_p(X_\tau, \mathbb{R})$.
%For $L^2$ when $X$ is $2$-dimensional, $\kappa_2(X_\tau, L^2) = \lambda_1^{-2}$ and in general $\kappa_2(X_\tau, L^p) = \kappa_p(X_\tau, \mathbb{R})$. Above, $\lambda_1$ is the smallest positive eigenvalue of the graph Laplacian $\triangle_+ = (\delta d)_\tau$, defined by $$\triangle_+f(v) = f(v) - \dfrac{1}{\omega_\tau(v)} \sum_{u \in L_v}f(u),$$ where $L_v$ denotes the link of $v$ in $X_\tau$, over the space $C^{(0,p)}(X_\tau, \mathbb{R})$ of real-valued functions on the vertices.  
\qed \end{proposition}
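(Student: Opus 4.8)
The plan is to reduce the statement to the classical spectral geometry of a finite connected weighted graph, and then to obtain the general $L^p$ case by an integration argument. Throughout $\dim X=2$, so by Proposition \ref{poink} the link $X_\tau$ is a finite graph; since the Theorem assumes it connected, I work under that hypothesis (it is what makes the Poincaré inequality hold with a finite constant in the first place).

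\emph{The scalar Hilbert case.} View $C^{(0,2)}(X_\tau,\mathbb{R})$ as the finite-dimensional space of real functions on the vertices of $X_\tau$, equipped with the pairing $\langle\cdot,\cdot\rangle_{(0,2)}$, which here is an inner product. On this space $\triangle_+=\delta_\tau d_\tau$, the graph Laplacian of $X_\tau$: applying Proposition \ref{codifferential} to $X_\tau$ and using Proposition \ref{combinatorial} with $n=1$, $k=0$ to identify $\sum_{u\sim v}\omega_\tau(\{u,v\})=\omega_\tau(v)$ (and $\omega_\tau(\{u,v\})=1$, $\omega_\tau(v)=|L_v|$) recovers exactly the averaging formula in the statement. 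By the definition of $\delta_\tau$ as the adjoint of $d_\tau$, the operator $\triangle_+$ is self-adjoint and positive semidefinite with
\[
\langle\triangle_+f,f\rangle_{(0,2)}=\langle d_\tau f,d_\tau f\rangle_{(1,2)}=\Vert d_\tau f\Vert_{(1,2)}^2,
\]
its kernel is the space of constants (since $\Vert d_\tau f\Vert_{(1,2)}=0$ forces $f$ locally constant, hence constant by connectedness), and the weighted mean $M$ is the orthogonal projection onto the constants for $\langle\cdot,\cdot\rangle_{(0,2)}$. Decomposing $f=Mf+\sum_{\mu>0}f_\mu$ in an eigenbasis of $\triangle_+$, with $\mu$ running over the positive spectrum so $\mu\ge\lambda_1$,
\[
\Vert f-Mf\Vert_{(0,2)}^2=\sum_{\mu>0}\Vert f_\mu\Vert_{(0,2)}^2\le\frac1{\lambda_1}\sum_{\mu>0}\mu\Vert f_\mu\Vert_{(0,2)}^2=\frac1{\lambda_1}\langle\triangle_+f,f\rangle_{(0,2)}=\frac1{\lambda_1}\Vert d_\tau f\Vert_{(1,2)}^2,
\]
so $\kappa_2(X_\tau,\mathbb{R})\le\lambda_1^{-1/2}$; testing on a $\lambda_1$-eigenfunction saturates every inequality, giving $\kappa_2(X_\tau,\mathbb{R})=\lambda_1^{-1/2}$. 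Since a separable infinite-dimensional Hilbert space is isometric to $L^2$, the asserted value for $L^2$ follows from the next step.

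\emph{The $L^p$ case.} Let $f\colon\Sigma_\tau(0)\to L^p(Y,\mu)$ and write $f(v)=f(v)(\cdot)$. Because $M$ is a fixed real-linear combination of point evaluations and $d_\tau$ is a signed difference operator, both commute with evaluation at $y\in Y$; hence by Tonelli each side of the Poincaré inequality for $f$ is the $\mu$-integral of the corresponding scalar quantity for the real function $f(\cdot)(y)$, and applying the scalar inequality pointwise in $y$ and integrating back yields $\Vert f-Mf\Vert_{(0,p)}^p\le\kappa_p(X_\tau,\mathbb{R})^p\Vert d_\tau f\Vert_{(1,p)}^p$, i.e. $\kappa_p(X_\tau,L^p)\le\kappa_p(X_\tau,\mathbb{R})$. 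Conversely, choosing $A\subset Y$ with $0<\mu(A)<\infty$, the map $t\mapsto t\,\mu(A)^{-1/p}\mathbf{1}_A$ is an isometric embedding $\mathbb{R}\hookrightarrow L^p(Y,\mu)$ commuting with $M$ and $d_\tau$, so every constant valid for $L^p$-valued cochains is valid for real ones, whence $\kappa_p(X_\tau,\mathbb{R})\le\kappa_p(X_\tau,L^p)$. Thus $\kappa_p(X_\tau,L^p)=\kappa_p(X_\tau,\mathbb{R})$, and for $p=2$ this gives $\kappa_2(X_\tau,L^2)=\lambda_1^{-1/2}$.

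\emph{Main obstacle.} Nothing here is deep; the only genuinely delicate points are the normalization bookkeeping — verifying via Propositions \ref{switchingsums} and \ref{BS1.10} that the orbit weights $\omega_\tau(\sigma)/|\Gamma_{\tau\sigma}|$ are arranged so that $M$ is literally the orthogonal projection onto constants and $\triangle_+=\delta_\tau d_\tau$ is literally the stated averaging operator — and confirming that $M$ and $d_\tau$ genuinely act coordinatewise in the $L^p(Y)$-variable, which is what legitimizes the interchange of sum and integral. Since this proposition is essentially due to Nowak, one may alternatively just cite \cite{Nowak} for the scalar-to-$L^p$ reduction.
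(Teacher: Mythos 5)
Your proposal is correct, and it actually supplies an argument where the paper gives none: Proposition \ref{hilb1} is stated with a citation to \cite{Nowak} and a \qed, so there is no in-paper proof to compare against. What you write is essentially the standard argument underlying the cited result: (i) on the finite connected weighted graph $X_\tau$, $\triangle_+=\delta_\tau d_\tau$ is self-adjoint and positive semidefinite for the weighted pairing, $M$ is the orthogonal projection onto constants, and the eigenfunction decomposition gives $\kappa_2(X_\tau,\mathbb{R})=\lambda_1^{-1/2}$, with sharpness from a $\lambda_1$-eigenfunction (note $Mf=0$ there); (ii) the coordinatewise/Fubini argument plus the isometric embedding $t\mapsto t\,\mu(A)^{-1/p}\mathbf{1}_A$ gives $\kappa_p(X_\tau,L^p)=\kappa_p(X_\tau,\mathbb{R})$. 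Your bookkeeping concerns are harmless: the extra factors $1/\vert\Gamma_\tau\vert$ and $1/\vert\Gamma_{\tau\sigma}\vert$ in the localized norms cancel on both sides of the inequality (Proposition \ref{BS1.10}), so they do not affect the constant, and in dimension $2$ every edge of the link has $\omega_\tau=1$, which is exactly why $\triangle_+$ is the plain averaging operator of the statement. The one caveat worth recording is interpretive rather than mathematical: read literally, the first equality in the proposition is ``$\kappa_p(X_\tau,L^2)=\lambda_1^{-1/2}$'' for general $p$, which your pointwise argument does not cover (the exponent of the coefficient space must match $p$ for the Fubini step); what you prove is $\kappa_2(X_\tau,L^2)=\lambda_1^{-1/2}$ together with $\kappa_p(X_\tau,L^p)=\kappa_p(X_\tau,\mathbb{R})$, which is the reading consistent with the introduction and with the only use made of the proposition ($p=2$, in Proposition \ref{FIX1} and Theorem \ref{theorem}).
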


\section{$L^p\,$-cohomology and vanishing for uniformly bounded representations} \label{LH}
Following \cite{BS} we introduce $L^p$-cohomology of $X$ with coefficients in $\pi$ as a natural extension of the $L^2$-cohomology for unitary representations. In particular, if $\pi$ is a unitary representation $L^2 H^k(X, \pi)$, as described below, is the cohomology of the complex of $\textrm{mod}$ $\Gamma$ square integrable cochains of $X$ twisted by $\pi$. The connection to property $(T)$ is as follows: if $X$ is a two dimensional contractible simplicial complex and $\Gamma$ acts properly discontinuously and cocompactly by automorphisms on it, then $\Gamma$ has property $(T)$ if and only if $L^2 H^1(X, \pi) = 0$ for any unitary representation \cite{hv}. As an application we derive a spectral condition for cohomological vanishing for square integrable cochains on a two dimensional simplicial complex twisted by a uniformly bounded representation. 
\begin{definition}
Let $$L^p H^k(X,\pi) = \ker \left( d \vert_{L^{(k,p)}(X,E)} \right) / \mathrm{im} \left( d \vert_{L^{(k-1,p)}(X,E)} \right)$$ denote the $L^p$-cohomology groups of $X$ with coefficients twisted by $\pi$. 
\end{definition} 
As the following shows, cohomological vanishing takes place when $\delta$ is bounded from below:
\begin{proposition} \label{VCH1}
The map $$d_{k-1} \vert_{L^{(k-1,p)}(X,E)} : L^{(k-1,p)}(X,E) \longrightarrow \ker \, d_k \vert_{L^{(k,p)}(X,E)}$$ is onto if its adjoint $$\delta_{k} : (\ker \, d_{k} \vert_{L^{(k,p)}(X,E)})^* \rightarrow L^{(k-1,p^*)}(X,E^*),$$ is bounded from below, that is $\exists \, K > 0$ such that for all $f \in \left( \ker {d_k} \vert_{L^{(k,p)}(X,E)} \right)^*$
$$\Vert \delta f \Vert_{(k-1,p^*)} \ge K \Vert f \Vert_{(k,p^*)}.$$ If in addition $d_{k-1}$ is injective, $d_{k-1}$ is onto if and only if $\delta_k$ is bounded from below.
\end{proposition}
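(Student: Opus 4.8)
The plan is to read Proposition~\ref{VCH1} as the standard duality between surjectivity of an operator and boundedness-below of its adjoint, i.e.\ Banach's closed range theorem, once the maps have been put on the correct Banach spaces. First I would fix the functional-analytic frame: since $E$ is reflexive the cochain spaces are complete, and $\ker d_k|_{L^{(k,p)}(X,E)}$ is a closed subspace because $d_k$ is bounded, hence itself a Banach space in the induced norm. Since $d\circ d=0$, the operator $d_{k-1}|_{L^{(k-1,p)}(X,E)}$ takes its values in $\ker d_k|_{L^{(k,p)}(X,E)}$, so it corestricts to a bounded operator
$$T\colon L^{(k-1,p)}(X,E)\longrightarrow \ker d_k|_{L^{(k,p)}(X,E)},$$
and the assertion ``$d_{k-1}$ is onto $\ker d_k$'' is exactly ``$T$ is surjective''.

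Second, I would identify $T^{*}$ with the map denoted $\delta_k$ in the statement. Using Proposition~\ref{fin4} for $L^{(k-1,p)}(X,E)^{*}\cong L^{(k-1,p^*)}(X,E^*)$, together with the Hahn--Banach description $(\ker d_k|_{L^{(k,p)}(X,E)})^{*}\cong L^{(k,p)}(X,E)^{*}/\mathrm{Ann}(\ker d_k)$, the adjoint of $T$ is precisely the map induced on that quotient by the adjoint $\delta_k$ of $d_{k-1}$; this is what the statement writes as $\delta_k\colon(\ker d_k|_{L^{(k,p)}(X,E)})^{*}\to L^{(k-1,p^*)}(X,E^*)$. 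Under these identifications the hypothesis ``$\delta_k$ bounded from below'' becomes ``$T^{*}$ bounded from below''.

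Third, the conclusion then follows from the closed range theorem. If $T^{*}$ is bounded from below it is injective, so $\mathrm{im}\,T$ is dense; and boundedness below forces $\mathrm{im}\,T^{*}$ to be closed, whence $\mathrm{im}\,T$ is closed by Banach's theorem, so $\mathrm{im}\,T=\ker d_k|_{L^{(k,p)}(X,E)}$ and $d_{k-1}$ is onto. For the ``in addition'' clause: if $d_{k-1}$ is injective then, once onto, $T$ is a continuous bijection, hence a topological isomorphism by the open mapping theorem, so $\delta_k=T^{*}$ is a topological isomorphism and in particular bounded from below; conversely, boundedness below of $\delta_k$ yields surjectivity of $T$ by the first part, and with injectivity this gives bijectivity. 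Thus, assuming $d_{k-1}$ injective, $d_{k-1}$ is onto if and only if $\delta_k$ is bounded from below.

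The one point I expect to require genuine care is the second step: verifying that the operator called $\delta_k$ on $(\ker d_k|_{L^{(k,p)}(X,E)})^{*}$ in the statement really is the adjoint of the corestricted $T$ under the duality identifications of Section~\ref{L}, rather than some a priori different map. Everything else is the textbook equivalence ``$T$ surjective $\iff$ $T^{*}$ bounded from below''.
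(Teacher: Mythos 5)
Your proposal is correct and takes essentially the same route as the paper: both rest on the standard duality that surjectivity of the corestricted $d_{k-1}$ is equivalent to its adjoint $\delta_k$ being bounded from below, with the open mapping theorem handling the converse when $d_{k-1}$ is injective. The only difference is cosmetic: the paper phrases the forward direction through the identity $\mathrm{im}\,d_{k-1}=\mathrm{Ann}(\ker\delta_k)$ together with reflexivity, whereas you invoke the closed range theorem explicitly (dense range from injectivity of $T^*$, closedness from boundedness below), which if anything makes clearer where the full bounded-below hypothesis, and not mere injectivity of $\delta_k$, is actually used.
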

\begin{proof} Since $d_{k} \circ d_{k-1} \vert_{L^{(k-1,p)}(X,E)} = 0$, $\mathrm{im} \, d_{k-1} \subseteq \ker d_k \vert_{L^{(k,p)}(X,E)}$ without further assumptions. Now, assume $\delta_k$ is bounded from below. Then $\delta_k$ is injective; towards a contradiction, suppose $f,g \in (\ker \, d_{k} \vert_{L^{(k,p)}(X,E)})^*$ such that $f \neq g$ and $\delta_k \, f = \delta_k \, g$. Recalling that $\Vert \cdot \Vert_{(k,p^*)}$ is a norm restricted to $L^{(k,p^*)}(X,E)$ leads to a contradiction
\begin{align}
0 = \Vert \delta_k f - \delta_k g \Vert_{(k-1,p^*)} = \Vert \delta_k (f-g)\Vert_{(k-1,p^*)} \geq K \Vert f - g\Vert_{(k,p^*)} > 0. \nonumber
\end{align}
Thus, $\delta$ is injective. In particular, $\ker \, \delta_k = \lbrace 0 \rbrace$ and since $L^{(k,p)}(X,E)$ is reflexive and $\ker \, d_k\vert_{L^{(k,p)}(X,E)}$ is closed, the latter is also reflexive and \begin{align} \mathrm{im} \, d_{k-1} &=  \mathrm{Ann}(\ker \, \delta_k) \nonumber \\ &= \lbrace f \in {(\ker \, d_{k} \vert_{L^{(k,p)}(X,E)})^*}^* \colon \langle g,f \rangle_{k} = 0, \, \forall g \in \ker \, \delta_k \rbrace \nonumber \\ &\cong \lbrace f \in \ker \, d_{k} \vert_{L^{(k,p)}(X,E)} \colon \langle g,f \rangle_{k} = 0, \, \forall g \in \lbrace 0 \rbrace \rbrace \nonumber \\ &= \ker \, d_{k} \vert_{L^{(k,p)}(X,E)}, \nonumber \end{align} so $d_{k-1}$ is onto $\ker d_k \vert_{L^{(k,p)}(X,E)}$. Next, suppose $d_{k-1}$ is onto. Since $d_{k-1}$ is bounded, it is bounded from below by the open mapping theorem if $d_{k-1}$ is injective. \end{proof}
This criteria is in fact related to the Poincar\'e constants of the links as Proposition \ref{bedlewoinequalityNOT} shows. This allows us to formulate a spectral condition for cohomological vanishing.
\begin{proposition} \label{FIX1}
Suppose $X$ is a $2$-dimensional locally finite simplicial complex such that for any vertex $\tau$ of $X$ the link $X_\tau$ is connected and $\mathcal{H}$ a separable infinite-dimensional Hilbert space.  Suppose there exists a constant $C$ such that   
\begin{enumerate}
\item[i.] the map $C \mathcal{I}: E \rightarrow \mathcal{H}$ where $\mathcal{I}: E \rightarrow \mathcal{H}$ is the identity map, is a Banach space isomorphism with the property $\Vert x \Vert_E \leq \Vert C\mathcal{I}(x) \Vert_\mathcal{H} \leq C \Vert x \Vert_E$ for all $x \in E$. Then, for $f \in L^{(1,2)}(X,E) \cap \ker {d}$ $$\kappa_2(X_\tau, \mathcal{H})^{-2} \Vert Mf_\tau\Vert^2_{(0,2)} + Q_\tau(f_\tau) \geq \dfrac{1}{C^2}\left( \kappa_2(X_\tau, \mathcal{H})^{-2} -  \dfrac{C^2}{2}\right) \Vert f_\tau \Vert^2_{(0,2)};$$ 
\item[ii.] the identity map $\overline{\mathcal{I}}: E^* \rightarrow \mathcal{H}$ is a Banach space isomorphism with the property $\Vert x \Vert_{E^*} \leq \Vert \overline{\mathcal{I}}(x) \Vert_{\mathcal{H}} \leq C \Vert x \Vert_{E^*}$ for all $x \in {E^*}$. Then, for $f \in L^{(1,2)}(X,E^*) \cap \ker \overline{d}$ $$\kappa_2(X_\tau, \mathcal{H})^{-2} \Vert \overline{M}f_\tau\Vert^2_{(0,2)} + Q_\tau(f_\tau) \geq \dfrac{1}{C^2}\left( \kappa_2(X_\tau, \mathcal{H})^{-2} -  \dfrac{C^2}{2}\right) \Vert f_\tau \Vert^2_{(0,2)}.$$
\end{enumerate}
\end{proposition}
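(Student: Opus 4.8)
The plan is to prove (i) and then obtain (ii) by the identical argument applied to $E^*$, $\overline{\mathcal{I}}$, $\overline{M}$ and $\overline{d}$ in place of $E$, $\mathcal{I}$, $M$ and $d$. Since $\dim X = 2$, Definition \ref{Q} gives $Q_\tau(f_\tau)=\Vert d_\tau f_\tau\Vert^2_{(1,2)}-\tfrac12\Vert f_\tau\Vert^2_{(0,2)}$, so the term $-\tfrac12\Vert f_\tau\Vert^2_{(0,2)}$ occurs on both sides of the asserted inequality and cancels; the claim is therefore equivalent to
$$\kappa_2(X_\tau,\mathcal{H})^{-2}\Vert Mf_\tau\Vert^2_{(0,2)}+\Vert d_\tau f_\tau\Vert^2_{(1,2)}\ge\frac{1}{C^2}\,\kappa_2(X_\tau,\mathcal{H})^{-2}\Vert f_\tau\Vert^2_{(0,2)},$$
all norms being the $E$-norms; connectedness of $X_\tau$ is exactly what makes $\kappa_2(X_\tau,\mathcal{H})$ finite, so this is not vacuous. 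Here $f_\tau\in L^{(0,2)}(X_\tau,E)$ and $d_\tau f_\tau\in L^{(1,2)}(X_\tau,E)$, while $Mf_\tau\in L^{(0,2)}(X_\tau,E)$ by Corollary \ref{Mbound}.

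Next I would pass to the Hilbert norm on the link. Using Proposition \ref{BS1.10} I rewrite each of $\Vert f_\tau\Vert^2_{(0,2)}$, $\Vert Mf_\tau\Vert^2_{(0,2)}$ and $\Vert d_\tau f_\tau\Vert^2_{(1,2)}$ as a sum over \emph{all} simplexes of $X_\tau$, and I introduce the analogous quantities $\Vert\cdot\Vert_{(\cdot),\mathcal{H}}$ obtained by replacing $\Vert\cdot\Vert_E$ by $\Vert\cdot\Vert_{\mathcal{H}}$ in those sums; these are well defined even though $\Vert\pi_g\,\cdot\,\Vert_{\mathcal{H}}$ is not $\Gamma_\tau$-invariant. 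From the hypothesis $\tfrac1C\Vert x\Vert_E\le\Vert x\Vert_{\mathcal{H}}\le\Vert x\Vert_E$ I obtain, summand by summand, $\Vert d_\tau f_\tau\Vert^2_{(1,2)}\ge\Vert d_\tau f_\tau\Vert^2_{(1,2),\mathcal{H}}$, $\Vert Mf_\tau\Vert^2_{(0,2)}\ge\Vert Mf_\tau\Vert^2_{(0,2),\mathcal{H}}$, and $\Vert f_\tau\Vert^2_{(0,2),\mathcal{H}}\ge\tfrac1{C^2}\Vert f_\tau\Vert^2_{(0,2)}$. I then need two facts about the Hilbert quantities. First, $Mf_\tau$ is precisely the $\Vert\cdot\Vert_{(0,2),\mathcal{H}}$-orthogonal projection of $f_\tau$ onto the constant cochains — immediate from the definition of $Mf_\tau$ as the $\omega_\tau$-weighted average, since the inner product carries the same weights — so Pythagoras gives $\Vert f_\tau\Vert^2_{(0,2),\mathcal{H}}=\Vert f_\tau-Mf_\tau\Vert^2_{(0,2),\mathcal{H}}+\Vert Mf_\tau\Vert^2_{(0,2),\mathcal{H}}$. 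Second, the Poincar\'e inequality on the finite graph $X_\tau$ holds for $\mathcal{H}$-valued functions with constant $\kappa_2(X_\tau,\mathcal{H})$: indeed $\mathcal{H}\cong L^2$, and by Proposition \ref{hilb1} the $L^2$-Poincar\'e constant equals the scalar constant $\lambda_1^{-1/2}$, the $L^2$-inequality following from the scalar one in Proposition \ref{poink} applied pointwise on the underlying measure space and integrated; hence $\Vert f_\tau-Mf_\tau\Vert^2_{(0,2),\mathcal{H}}\le\kappa_2(X_\tau,\mathcal{H})^2\Vert d_\tau f_\tau\Vert^2_{(1,2),\mathcal{H}}$.

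Combining these is then arithmetic: chaining the inequalities gives
$$\Vert d_\tau f_\tau\Vert^2_{(1,2)}\ge\Vert d_\tau f_\tau\Vert^2_{(1,2),\mathcal{H}}\ge\kappa_2(X_\tau,\mathcal{H})^{-2}(\Vert f_\tau\Vert^2_{(0,2),\mathcal{H}}-\Vert Mf_\tau\Vert^2_{(0,2),\mathcal{H}})\ge\kappa_2(X_\tau,\mathcal{H})^{-2}(\tfrac1{C^2}\Vert f_\tau\Vert^2_{(0,2)}-\Vert Mf_\tau\Vert^2_{(0,2)}),$$
which rearranges to the equivalent inequality above, and hence to (i). For (ii) I would repeat the argument verbatim, using the second hypothesis, $\overline{M}$, $\overline{d}_\tau$, and the dual Poincar\'e inequality recorded in Proposition \ref{poink}. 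The one point that needs care is the bridge between the mod-$\Gamma_\tau$ normalization used throughout the paper and the plain finite-graph Poincar\'e inequality: because the $\mathcal{H}$-norm of a $\pi_\tau$-twisted cochain is not $\Gamma_\tau$-invariant, one cannot simply quote the localized Poincar\'e corollary with coefficients in $\mathcal{H}$, and must instead argue through the genuine (unnormalized) inequality on $X_\tau$ together with Proposition \ref{BS1.10}; every other step is a one-line estimate.
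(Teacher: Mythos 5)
Your proof is correct and follows essentially the same route as the paper's: compare the $E$- and $\mathcal{H}$-(semi)norms via $\tfrac1C\Vert\cdot\Vert_E\le\Vert\cdot\Vert_{\mathcal H}\le\Vert\cdot\Vert_E$, apply the Poincar\'e inequality together with the Pythagorean identity for the weighted Hilbert norm on the link, and convert back. Your preliminary cancellation of the $-\tfrac12\Vert f_\tau\Vert^2_{(0,2)}$ terms and your explicit handling of the mod-$\Gamma_\tau$ normalization are minor streamlinings of bookkeeping that the paper's proof leaves implicit.
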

\begin{proof}
$(i)$. As usual, write $\Vert \cdot \Vert_{(k,2)}$ for the seminorm on $\mathcal{E}^{(k,2)}(X,E)$ and write $\Vert \cdot \Vert_{(k,2), \mathcal{H}}$ for the seminorm on $\mathcal{E}^{(k,2)}(X,\mathcal{H})$. For $f \in L^{(1,2)}(X,E) \cap \ker {d}$
\begin{align}
Q_\tau(f_\tau) = \Vert d_\tau f_\tau \Vert_{(1,2)}^2 - \dfrac{1}{2} \Vert f_\tau \Vert^2_{(0,2)} \geq \Vert d_\tau f_\tau \Vert_{(1,2), \mathcal{H}}^2 - \dfrac{C^2}{2} \Vert f_\tau \Vert^2_{(0,2), \mathcal{H}} \nonumber
\end{align} 
On the other hand, by the Poincar\'e inequality and the Pythagorean identity $$\Vert d_\tau f_\tau \Vert_{(1,2), \mathcal{H}}^2 \geq \kappa_2(X_\tau, \mathcal{H})^{-2}\Vert f_\tau\Vert^2_{(0,2), \mathcal{H}} - \kappa_2(X_\tau, \mathcal{H})^{-2}\Vert Mf_\tau\Vert^2_{(0,2), \mathcal{H}}$$ Hence,
\begin{align}
Q_\tau(f_\tau) \geq \kappa_2(X_\tau, \mathcal{H})^{-2}\Vert f_\tau\Vert^2_{(0,2), \mathcal{H}} - \kappa_2(X_\tau, \mathcal{H})^{-2}\Vert Mf_\tau\Vert^2_{(0,2), \mathcal{H}} - \dfrac{C^2}{2} \Vert f_\tau \Vert^2_{(0,2), \mathcal{H}}, \nonumber 
\end{align} and in terms of the $\Vert \cdot \Vert_{(k,p)}$ norm 
\begin{align}
\kappa_2(X_\tau, \mathcal{H})^{-2}\Vert Mf_\tau\Vert^2_{(0,2)} + Q_\tau(f_\tau) \geq \dfrac{1}{C^2}\left( \kappa_2(X_\tau, \mathcal{H})^{-2} - \dfrac{C^2}{2} \right) \Vert f_\tau \Vert^2_{(0,2)}. \nonumber
\end{align} Similarly for $(ii)$.

%Suppose $f_\tau \in L^{(0,2)}(X_\tau,E) \cap \ker {d}_\tau$ and write $(\cdot, \cdot)_0$ for the dual pairing on $\mathcal{E}^{(0,2)}(X_\tau,\mathcal{H})$. Then, by the norm inequality above $$Q_\tau(f_\tau) \geq ( \Delta_+ f_\tau -\dfrac{C^2}{2}f_\tau, f_\tau )_0 \geq \left( \lambda_1 - \dfrac{C^2}{2}\right)( f_\tau, f_\tau )_0 = \dfrac{1}{C^2}\left( \lambda_1 - \dfrac{C^2}{2}\right)\Vert f_\tau \Vert^2_{(0,2)},$$ cf. Proposition \ref{hilb1}. Thus, $Q_\tau(f_\tau) \geq 0$ if $\lambda_1 \geq C^2/2$, that is $\kappa_2(X_\tau,\mathcal{H}) \leq \sqrt{2}/C$. Similarly for the second part.
\end{proof}
%The above shows that $Q_\tau(f_\tau) \geq 0$ for all $f \in L^{(1,p)}(X,E) \cap \ker d$ when the Banach-Mazur distance between $E$ and $\mathcal{H}$ is controlled. Unfortunately, at this time we do not have a general proof that $Q_\tau(f_\tau) \geq 0$ for all $f \in L^{(1,p)}(X,E) \cap \ker d$ not relying on this, which would be a significant improvement.

\begin{corollary} \label{bedlewoinequalityNOT}
Assuming Proposition \ref{FIX1} holds such that $\kappa_2(X_\tau,\mathcal{H}) \leq \kappa_2(X,\mathcal{H})$ for every link $X_\tau$ of $X$. Then, for $$\kappa_2(X,\mathcal{H}) < \sqrt{2}C^{-1},$$  $\delta$ and $\overline{\delta}$  are bounded from below.
\end{corollary}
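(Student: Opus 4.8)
The plan is to sum the link-by-link inequality of Proposition \ref{FIX1} over a set of orbit representatives and extract the lower bound from the numerical hypothesis. Write $\kappa := \kappa_2(X,\mathcal{H})$. The hypothesis $\kappa_2(X_\tau,\mathcal{H}) \leq \kappa$ enters exactly once: since a Poincar\'e inequality only weakens when its constant is enlarged, the substitution of the global constant $\kappa$ for $\kappa_2(X_\tau,\mathcal{H})$ only weakens the Poincar\'e step in the proof of Proposition \ref{FIX1} while leaving every other step intact, so that the inequality of Proposition \ref{FIX1}(ii) remains valid in the form
\begin{align}
\kappa^{-2}\Vert \overline{M}\phi_\tau\Vert^2_{(0,2)} + Q_\tau(\phi_\tau) \geq \frac{1}{C^2}\left(\kappa^{-2} - \frac{C^2}{2}\right)\Vert \phi_\tau\Vert^2_{(0,2)} \nonumber
\end{align}
for every $\tau \in \Sigma(0,\Gamma)$ and every $\phi \in L^{(1,2)}(X,E^*)\cap\ker\overline{d}$ (the passage from $\mathcal{H}$-norms back to $E^*$-norms in that proof requires $\kappa^{-2}-C^2/2 > 0$, which is also part of the hypothesis). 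This is the step that turns per-link estimates, whose constants could a priori vary wildly with $\tau$, into a single estimate with a $\tau$-independent constant.

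Next I would fix $\phi \in L^{(1,2)}(X,E^*)\cap\ker\overline{d}$ and sum the displayed inequality over $\tau \in \Sigma(0,\Gamma)$. Three facts from Section \ref{secloc} collapse the sums: $\sum_{\tau}Q_\tau(\phi_\tau) = 0$ by the dual-side analogue of Corollary \ref{propQ} (legitimate because $\phi\in\ker\overline{d}$); $\sum_{\tau}\Vert\phi_\tau\Vert^2_{(0,2)} = 2\Vert\phi\Vert^2_{(1,2)}$ by Proposition \ref{summing} with $j=0$, $k=1$; and $\sum_{\tau}\Vert\overline{M}\phi_\tau\Vert^2_{(0,2)} = \tfrac{1}{2}\Vert\delta\phi\Vert^2_{(0,2)}$ by Proposition \ref{averagenorms}(2) with $k=1$, $n=2$, $p^*=2$, which identifies $\overline{M}\phi_\tau = \phi^0_\tau$ with $\tfrac{(-1)^k}{n-k+1}\,\delta\phi(\tau)$ and computes its $(0,2)$-norm. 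Substituting these into the summed inequality gives $\tfrac{\kappa^{-2}}{2}\Vert\delta\phi\Vert^2_{(0,2)} \geq \tfrac{2}{C^2}\left(\kappa^{-2}-\tfrac{C^2}{2}\right)\Vert\phi\Vert^2_{(1,2)}$, that is
\begin{align}
\Vert\delta\phi\Vert^2_{(0,2)} \geq \left(\frac{4}{C^2} - 2\kappa^2\right)\Vert\phi\Vert^2_{(1,2)}.\nonumber
\end{align}
The condition $\kappa_2(X,\mathcal{H}) < \sqrt{2}\,C^{-1}$ is exactly $\tfrac{4}{C^2} - 2\kappa^2 > 0$, so $\delta$ is bounded from below with constant $K = \left(\tfrac{4}{C^2} - 2\kappa_2(X,\mathcal{H})^2\right)^{1/2}$. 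Running the identical computation with Proposition \ref{FIX1}(i) applied to $f\in L^{(1,2)}(X,E)\cap\ker d$ yields the corresponding bound for $\overline{\delta}$.

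Beyond these essentially formal steps, the one point requiring care is the identification: the bound just obtained is for cochains $\phi \in \ker\overline{d}$, whereas Proposition \ref{VCH1} demands boundedness below of $\delta_k$ on $\left(\ker d_k\vert_{L^{(k,p)}(X,E)}\right)^*$. One must therefore invoke the isometric identifications of Sections \ref{L} and \ref{DC} to see that an element of this dual space is represented by a cochain that may be taken in $\ker\overline{d}\subseteq L^{(1,2)}(X,E^*)$, that $\delta_k$ acts on it as $\delta$, and that the dual-space norm is dominated by the norm used above; I expect this matching of spaces, rather than the combinatorial identities, to be the main obstacle.
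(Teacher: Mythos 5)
Your proposal is correct and follows essentially the paper's own argument: sum the inequality of Proposition \ref{FIX1} over $\tau \in \Sigma(0,\Gamma)$ and collapse the three terms via Proposition \ref{summing}, Proposition \ref{averagenorms} and Corollary \ref{propQ}, the hypothesis $\kappa_2(X,\mathcal{H}) < \sqrt{2}C^{-1}$ making the resulting constant positive. Your explicit uniformization of the link constants inside the Poincar\'e step is a cleaner treatment of a point the paper leaves implicit, and the dual-space identification you flag at the end is handled in the proof of Theorem \ref{theorem}, not in this corollary.
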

\begin{proof}
By Proposition \ref{FIX1}(i)
\begin{align}
\kappa_2(X_\tau, \mathcal{H})^{-2}\Vert Mf_\tau\Vert^2_{(0,2)} + Q_\tau(f_\tau) \geq \dfrac{1}{C^2}\left( \kappa_2(X_\tau, \mathcal{H})^{-2} - \dfrac{C^2}{2} \right) \Vert f_\tau \Vert^2_{(0,2)}. \nonumber
\end{align} Thus, summing over the representatives $\tau \in \Sigma(0, \Gamma)$ gives, applying Propositions \ref{summing}, \ref{averagenorms}, and \ref{propQ} to the three terms respectively, that
\begin{align}
\Vert \delta f\Vert^2_{(0,2)} \geq \left( \dfrac{2 \kappa_2(X,\mathcal{H})}{C}\right)^2 \left( \kappa_2(X_\tau, \mathcal{H})^{-2} - \dfrac{C^2}{2} \right) \Vert f \Vert_{(1,2)}^2. \nonumber
\end{align} So, $\delta$ is bounded from below for $\kappa_2(X,\mathcal{H}) < {\sqrt{2}}{C^{-1}}$. Similarly for $\overline{\delta}$.
%Since $\kappa_2(X_\tau,\mathcal{H}) < \sqrt{2}C^{-1}$ it follows by Proposition \ref{FIX1} and the dual analogue of Corollary \ref{conjectured} that $$ \Vert \overline{d}_\tau f_\tau \Vert_{(1,2)}^{2} = \dfrac{1}{2} \Vert f_\tau \Vert^{2}_{(0,2)}.$$ Hence, \begin{align}
%\Vert f_\tau - f_\tau^0 \Vert_{(0,2)} &\leq C \kappa_{2}(X,E^*) \Vert \overline{d}_\tau f_\tau \Vert_{(1,2)} \nonumber \\
%\Vert f_\tau \Vert_{(0,2)} - \Vert f_\tau^0 \Vert_{(0,2)} &\leq \kappa_{2}(X,E^*) \Vert \overline{d}_\tau f_\tau \Vert_{(1,2)} \nonumber \\ \Vert f_\tau \Vert_{(0,2)} &\leq \kappa_{2}(X,E^*) \Vert \overline{d}_\tau f_\tau \Vert_{(1,2)} + \Vert f_\tau^0 \Vert_{(0,2)} \nonumber \\ \Vert f_\tau \Vert_{(0,2)} &\leq \dfrac{\kappa_{2}(X,E^*)}{\sqrt{2}} \Vert f_\tau \Vert_{(0,2)} + \Vert f_\tau^0 \Vert_{(0,2)}. \nonumber
%\end{align}
%That is, $$\left(1-\dfrac{\kappa_{2}(X,E^*)}{\sqrt{2}}\right)\Vert f_\tau \Vert_{(0,2)} \leq \Vert f_\tau^0 \Vert_{(0,2)}.$$ By squaring it follows by applying Proposition \ref{averagenorms} followed by Proposition \ref{summing} and simplifying that $$2\left(1-\dfrac{\kappa_{2}(X,E^*)}{\sqrt{2}}\right)\Vert f\Vert_{(1,2)}^{} \leq \Vert \delta f \Vert_{(0,2)}^{}.$$ Thus, $\delta$ is bounded from below when $\kappa_{2}(X,E^*) < \sqrt{2}$. On the other hand, by Proposition \ref{BI} $\kappa_2(X_\tau, E^*) \leq C \kappa_2(X_\tau, \mathcal{H})$, so $\delta$ is bounded from below when $\kappa_2(X_\tau, \mathcal{H}) < \sqrt{2}/C$.
%Since $\kappa_2(X_\tau, L_2) \leq C \kappa_2(X_\tau, E) < \sqrt{2} C^{-1}$, $\delta$ is bounded from below. 
%Similarly for $\overline{\delta}$.
\end{proof}

\begin{theorem} \label{theorem}
Let $X$ be a locally finite $2$-dimensional simplicial complex, $\Gamma$ a discrete properly discontinuous group of automorphisms of $X$ and $\pi : \Gamma \rightarrow \mathrm{B}(\mathcal{H})$ a uniformly bounded  representation of $\Gamma$ on a separable infinite-dimensional Hilbert space $\mathcal{H}$. Suppose the link $X_\tau$ of every vertex $\tau$ of $X$ is connected and the associated Poincar\'e constants satisfy $$C < \dfrac{\sqrt{2}}{\kappa_2(X_\tau, \mathcal{H})}$$
%the associated Poincar\'e constants satisfy $$\max \left\lbrace \kappa_2(X_\tau, E), \, \kappa_2(X_\tau, E^*) \right\rbrace < \sqrt{2} C^{-2},$$ 
for $C = \sup_{g \in \Gamma} \Vert \pi_g \Vert$. Then, $L^2H^1(X,\pi) = 0$.
\end{theorem}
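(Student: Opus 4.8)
The plan is to replace the uniformly bounded representation by a genuinely isometric one on a reflexive Banach space and then feed it into Corollary~\ref{bedlewoinequalityNOT} and Proposition~\ref{VCH1}.

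\emph{Renorming.} Since $\pi$ is uniformly bounded, put $\Vert x \Vert_E := \sup_{g \in \Gamma} \Vert \pi_g x \Vert_{\mathcal{H}}$. As $\pi$ is a homomorphism and right translation is a bijection of $\Gamma$, each $\pi_h$ is a $\Vert \cdot \Vert_E$-isometry; moreover $\Vert x \Vert_{\mathcal{H}} \le \Vert x \Vert_E \le C \Vert x \Vert_{\mathcal{H}}$ with $C := \sup_{g} \Vert \pi_g \Vert$. Hence $E := (\mathcal{H}, \Vert \cdot \Vert_E)$ is a reflexive Banach space (linearly isomorphic to $\mathcal{H}$), $\pi \colon \Gamma \to \mathrm{Iso}(E)$ is an isometric representation with isometric contragredient $\bar\pi$ on $E^*$, and $L^2 H^1(X, \pi) = L^2 H^1(X, E)$ by definition (any reflexive renorming making $\pi$ isometric carries the same underlying spaces $L^{(k,2)}$ and differentials). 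The identity map $\mathcal{I} \colon E \to \mathcal{H}$ then satisfies $\Vert x \Vert_E \le \Vert C\mathcal{I}(x) \Vert_{\mathcal{H}} \le C \Vert x \Vert_E$, and dualising the norm inequalities (via $\mathcal{H}^* \cong \mathcal{H}$) gives the identity $\bar{\mathcal{I}} \colon E^* \to \mathcal{H}$ with $\Vert x \Vert_{E^*} \le \Vert \bar{\mathcal{I}}(x) \Vert_{\mathcal{H}} \le C \Vert x \Vert_{E^*}$; so $E$ and $E^*$ satisfy the hypotheses of Proposition~\ref{FIX1} with this constant $C$.

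\emph{Spectral step.} Since $\dim X = 2$, each link $X_\tau$ is a finite connected graph (Proposition~\ref{poink}), so $\kappa_2(X_\tau, \mathcal{H}) \in (0, \infty)$, and the standing hypothesis $C < \sqrt{2}/\kappa_2(X_\tau, \mathcal{H})$ says exactly that $\kappa_2(X_\tau, \mathcal{H})^{-2} - C^2/2 > 0$ for every vertex $\tau$. Applying Proposition~\ref{FIX1}(i)--(ii) with $E$, $E^*$, $\mathcal{I}$, $\bar{\mathcal{I}}$ as above and summing the resulting local inequalities over $\tau \in \Sigma(0, \Gamma)$ — using Proposition~\ref{summing} on the $\Vert f_\tau \Vert_{(0,2)}$ terms, Proposition~\ref{averagenorms} on the $\Vert M f_\tau \Vert_{(0,2)}$ terms, and Corollary~\ref{propQ} to annihilate $\sum_\tau Q_\tau(f_\tau)$ — is precisely the content of Corollary~\ref{bedlewoinequalityNOT}, and it produces a constant $K > 0$ with $\Vert \delta f \Vert_{(0,2)} \ge K \Vert f \Vert_{(1,2)}$ on $\ker d$, together with the analogous bound for $\bar\delta$. (Here one reads the hypothesis as a uniform gap $\sup_\tau \kappa_2(X_\tau,\mathcal{H}) < \sqrt{2}/C$, which is automatic, e.g., when there are finitely many $\Gamma$-orbits of vertices.)

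\emph{Conclusion.} By Proposition~\ref{VCH1} applied with $k = 1$, the lower bound on $\delta_1 \colon (\ker d_1\vert_{L^{(1,2)}(X,E)})^* \to L^{(0,2)}(X,E^*)$ forces $d_0\vert_{L^{(0,2)}(X,E)}$ to map onto $\ker d_1\vert_{L^{(1,2)}(X,E)}$, i.e. $\ker d_1 = \mathrm{im}\, d_0$; hence $L^2 H^1(X, E) = 0$ and therefore $L^2 H^1(X, \pi) = 0$. I expect the renorming to be the main obstacle: one must check that the sup-renormed space is reflexive and that the two identity maps carry exactly the estimates Proposition~\ref{FIX1} demands, keeping careful track of the passages between $E$, $E^*$ and $\mathcal{H}$ and of the dual objects $\bar\pi$, $\bar\delta$ — and, secondarily, that the summed pointwise inequalities genuinely yield a strictly positive global lower bound. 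Everything after that is a direct appeal to results already established.
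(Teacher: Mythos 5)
Your overall route is the paper's route: renorm $\mathcal{H}$ by $\Vert x \Vert_E = \sup_{g\in\Gamma}\Vert \pi_g x\Vert_{\mathcal{H}}$ so that $\pi$ becomes isometric on a reflexive $E$ with $\Vert x\Vert_{\mathcal{H}}\le\Vert x\Vert_E\le C\Vert x\Vert_{\mathcal{H}}$, feed Proposition~\ref{FIX1} into Corollary~\ref{bedlewoinequalityNOT}, and conclude via Proposition~\ref{VCH1}. The renorming and the identification of the constant in Proposition~\ref{FIX1} are handled correctly.

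However, there is a genuine gap at the final step, and it is precisely the step you dismiss with ``everything after that is a direct appeal to results already established.'' Corollary~\ref{bedlewoinequalityNOT} gives the lower bound $\Vert\delta f\Vert_{(0,2)}\ge K\Vert f\Vert_{(1,2)}$ only for \emph{cocycles}, i.e.\ for $f\in\ker\overline{d}\subseteq L^{(1,2)}(X,E^*)$ (and the mirror bound on $\ker d\subseteq L^{(1,2)}(X,E)$). Proposition~\ref{VCH1}, on the other hand, requires $\delta_1$ to be bounded from below on all of $\bigl(\ker d_1\vert_{L^{(1,2)}(X,E)}\bigr)^*$, which by Hahn--Banach is the quotient $L^{(1,2)}(X,E^*)/\mathrm{Ann}(\ker d_1)$ with the quotient norm; its elements need not have representatives in $\ker\overline{d}$, so the spectral estimate does not apply to them directly. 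In your conclusion you simply assert ``the lower bound on $\delta_1\colon(\ker d_1)^*\to L^{(0,2)}(X,E^*)$,'' silently identifying the subspace $\ker\overline{d}$ (mapped into $(\ker d_1)^*$ by $i^*\circ\overline{i}$, where $i,\overline{i}$ are the inclusions of the two kernels) with the whole dual. The paper's proof devotes its second half to exactly this point: it uses the mirror spectral bound to show that $\overline{d}^{\,*}\circ\overline{i}^{\,*}\circ i$ is bounded from below on $\ker d_1$, hence $\overline{i}^{\,*}\circ i$ is bounded from below, hence its adjoint $i^*\circ\overline{i}$ is \emph{onto} $(\ker d_1)^*$; only then is $\delta_1$ bounded from below on the full dual and Proposition~\ref{VCH1} applicable. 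Without this (or some substitute argument showing every class in $(\ker d_1)^*$ is represented, with comparable norm, by an element of $\ker\overline{d}$), the deduction of $L^2H^1(X,\pi)=0$ does not go through.
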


%\begin{theorem} \label{theorem}
%Let $X$ be a locally finite $2$-dimensional simplicial complex, $\Gamma$ a properly discontinuous group of automorphisms of $X$ and $\pi : \Gamma \rightarrow \mathrm{isom}(E)$ an isometric representation of $\Gamma$ where $E$ is a reflexive Banach space. If the link $X_\tau$ of every vertex $\tau$ of $X$ is connected and for some $1<p<\infty$ the associated Poincar\'e constants satisfy $$ \max \left\{ 2^{(p^*-3)/p^*} \kappa_{p^*}(X_\tau, E^*), 2^{(p-3)/p} \kappa_{p}(X_\tau, E)  \right\} < 1,$$ then $L^pH^1(X,\pi) = 0$.
%\end{theorem}
\begin{proof}
Let $E$ be the Banach space $(\mathcal{H}, \Vert \cdot \Vert_E)$ where $\Vert \cdot\Vert_E = \sup_{g \in \Gamma}\Vert \pi_g(\cdot)\Vert_\mathcal{H}$. Now, $\pi$ is an isometric representation on $E$ and have the dual diagrams: \\
\xymatrix{ 
{\ker \, \overline{d}\,} \ar@{^{(}->}[r]^{\overline{i}}  \ar@/^3pc/[d]^{\delta \circ i^* \circ \overline{i}}  & {L^{(1,2)}(X,E^*)} \ar@{->>}[d]^{i^*} \\
{L^{(0,2)}(X,E^*)} \ar[u]^{\overline{d}} & {\left( \ker \, d \right)^*} \ar[l]^{\delta} \\ {L^{(0,2)}(X,E)} \ar[r]^{d} & {\ker d} \ar@{^{(}->}[d]^{i} \ar@/^2pc/[l]^{\overline{d}^* \circ \overline{i}^* \circ i} \\ {\left( \ker \, \overline{d} \right)^*} \ar[u]^{\overline{d}^*} & {L^{(1,2)}(X,E)} \ar@{->>}[l]^{\overline{i}^*} 
}
\\ We claim that $L^2H^1(X,E) = 0$, that is $d_0$ is onto $\ker d_1$. By Proposition \ref{VCH1} it is enough to prove that $d^*_0 = \delta_1$ is bounded from below on $(\ker {d}_1)^*$. Since Proposition \ref{FIX1} holds for $C = \sup_{g \in \Gamma} \Vert \pi_g \Vert$, and $ \kappa_2(X_\tau, \mathcal{H}) < \sqrt{2} C^{-1}$, it follows by Corollary \ref{bedlewoinequalityNOT} that $\delta_1$ is bounded from below when restricted to $\ker \overline{d}$. Hence, 
%$\delta_1 \circ i^*$ has closed range so $$\delta_1 \circ i^* \circ \overline{i} \colon \ker \overline{d}_1 \rightarrow L^{(0,p^*)}(X,E^*)$$ has closed range and 
$\delta_1$ is bounded from below on the image of $i^* \circ \overline{i}$. Thus, if $i^* \circ \overline{i}$ is onto $(\ker d_1)^*$, then $\delta_1$ is bounded from below on $(\ker d_1)^*$ and ${\delta_1 \,}^* = d_0$ is onto, by which the claim follows. By a similar argument ${\overline{d^*_0}}$  restricted to $\ker d_1$ is bounded from below, and thus $${\overline{d}_0\,}^* \circ {\overline{i}\,}^* \circ i \colon \ker d_1 \rightarrow L^{(0,p)}(X,E)$$ is bounded from below. In particular, ${\overline{i}\,}^* \circ i$ is bounded from below and hence $({\overline{i}\,}^* \circ i)^* = i^* \circ \overline{i}$ is onto $(\ker d_1)^*$.  \end{proof}

\end{document}